\theoremstyle{plain} 
\newtheorem{thm}{Theorem}[section]
\newtheorem{pro}[thm]{Proposition}
\newtheorem{lem}[thm]{Lemma}
\newtheorem{cor}[thm]{Corollary}
\theoremstyle{definition}
\theoremstyle{remark}
\newtheorem{rmk}[thm]{Remark}
\renewcommand{\det}{\mbox{det}}
  \numberwithin{equation}{section}
  \numberwithin{figure}{section}
\begin{document}

\title{\textbf{quadratic  growth solutions of Fully nonlinear elliptic equations with periodic data }}

\author{Dongsheng Li}
\author{Lichun Liang}

\address{School of Mathematics and Statistics, Xi'an Jiaotong University, Xi'an, P.R.China 710049.}
\address{School of Mathematics and Statistics, Xi'an Jiaotong University, Xi'an, P.R.China 710049.}

\email{lidsh@mail.xjtu.edu.cn}
\email{lianglichun126@stu.xjtu.edu.cn}

\begin{abstract}
In this paper, we study quadratic growth solutions $u$ of fully nonlinear elliptic equations of the form $F(D^2u)=f$ in $\mathbb{R}^n$, where $f$ is periodic and $F$ may be not uniformly elliptic. The existence of solutions and Liouville type results in the whole space and exterior domains
 are established, which generalize the classical results when $f$ is constant. As applications, the corresponding results are given to $k$-Hessian equations, which include the celebrated results for Monge-Amp\`{e}re equations.
\end{abstract}


\keywords{Fully Nonlinear Elliptic Equation; Quadratic growth solution; Periodic Datum}
\date{}
\maketitle

\section{Introduction}
\noindent

In this paper we are concerned with quadratic  growth solutions of fully nonlinear  elliptic equations of the form
\begin{equation}\label{eq1}
F(D^2u(x))=f(x),\ \ \ x\in \mathbb{R}^n,
\end{equation}
where $f$ is continuous and periodic, i.e.,
$$f(x+z)=f(x)$$
for all $x\in \mathbb{R}^n$ and $z\in \mathbb{Z}^n$.
$F$ is a real valued  continuous  function defined on $\mathcal{S}^{n\times n}$ the space of all real $n\times n$ symmetric matrices and  elliptic, i.e.,
$$F(X)\leq F(Y)$$
whenever $X \leq Y $ for any $X,Y \in \mathcal{S}^{n\times n}$.
 Throughout this paper, when the operator  $F$ is uniformly elliptic, this means that  there are  two  constants $0<\lambda \leq \Lambda < \infty$ such that
$$\lambda \|N\|\leq F(M+N)-F(M)\leq \Lambda\|N\|$$
for any $M, N\in \mathcal{S}^{n\times n}$ and $N\geq 0$.  We also introduce Pucci's extremal operators, i.e.,
$$\mathcal{M}^{+}(M)=\Lambda\sum_{\kappa_i>0} \kappa_i+\lambda \sum_{\kappa_i<0} \kappa_i\ \ \ \mbox{and}\ \ \ \mathcal{M}^{-}(M)=\lambda\sum_{\kappa_i>0} \kappa_i+\Lambda \sum_{\kappa_i<0} \kappa_i,$$
where $\kappa_i=\kappa_i(M) (i=1,\ldots,n)$ are the eigenvalues of $M\in \mathcal{S}^{n\times n}$.

In order to expound  the main motivation on this work, we would like to confine our attention to the development of Liouville type results
for elliptic equations with the periodic data in the whole space.  
In  1989, by implementing the tools from homogenization theory, Avellaneda and Lin \cite{AL} obtained a Liouville type result for linear elliptic equations of divergence form
$\partial_i(a_{ij}(x)\partial_ju(x))=0$ in $\mathbb{R}^n$.
Under the hypothesis that the coefficients $a_{ij}(x)$ are Lipschitz continuous and periodic, they showed that any polynomial growth solution of degree of at most $m$ must be a polynomial with  periodic coefficients.  A few years later, Moser and Struwe \cite{MS92}considered quasilinear elliptic equations $-div (F_{p}(x,Du(x)))=0$ in $\mathbb{R}^n$, where $F(x,p)$ is periodic in $x$ and satisfies convexity and suitable growth assumptions with respect to $p$. Using the Harnack inequality, they showed that any linear growth solution must be a linear function up to a periodic perturbation,
which partially generalizes Avellaneda and Lin's result from the linear to the nonlinear case. Moreover, they also  achieved a simplified proof for the linear case without the Lipschitz continuous assumption on the coefficients.
For linear elliptic equations of non-divergence form $a_{ij}(x)D_{ij}u(x)=0$ in $\mathbb{R}^n$ with measurable and periodic coefficients, Li and Wang \cite{LW}  proved a similar result to \cite{AL}.  For Monge-Amp\`{e}re equations $\det (D^2u(x))=f(x)$ in $\mathbb{R}^n$ with $f$ being periodic, Li \cite{LL90} first established the existence result for entire convex quadratic growth solutions decomposed as a quadratic polynomial plus a periodic function. After ten years,  Caffarelli and Li \cite{CL04}  showed that any convex solution 
must be a quadratic polynomial  up to a periodic perturbation under the condition that $f$ is periodic and smooth. Recently, the smooth assumption on $f$ has been weakened into $f\in L^{\infty}(\mathbb{R}^n)$ by Li and Lu\cite{LL22}.

In light of these invoked results, the aim of this paper is to investigate the existence  and Liouville type results for quadratic growth solutions of (\ref{eq1}). Before stating  our main results, we introduce the space $\mathbb{T}$ consisting of all continuously periodic functions with zero mean, defined by
$$\mathbb{T}=\left\{v\in C(\mathbb{R}^n):v(x+z)=v(x)\ \mbox{for all}\ x\in \mathbb{R}^n\ \mbox{and}\ z\in \mathbb{Z}^n, \fint_{[0,1]^n}v\,dx=0\right\}.$$
\bigskip

\noindent
\textbf{1.1. Uniformly elliptic equations in the whole space.} The following theorem is the cornerstone of the existence result for quadratic  growth solutions of (\ref{eq1}).
\bigskip
\begin{thm}\label{th3}
Let $F\in C^2(\mathcal{S}^{n\times n})$   be  concave (convex) and uniformly elliptic and  $f\in C^\alpha(\mathbb{R}^n)$  for some $0<\alpha<1$ be periodic. Then for any $A\in \mathcal{S}^{n\times n}$,
\begin{equation}\label{eq2}
F(A+D^2v)-\fint_{[0,1]^n}F(A+D^2v)\,dx=f- \fint_{[0,1]^n}f(x)\,dx
\end{equation}
has a unique  solution $v\in C^{2}(\mathbb{R}^n)\cap \mathbb{T}$.
\end{thm}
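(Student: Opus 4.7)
The plan is to view equation (\ref{eq2}) as the corrector problem from periodic homogenization: it is equivalent to finding a periodic $v\in C^2(\mathbb{R}^n)$ and a constant $\mu$ with
\begin{equation*}
F(A+D^2v)=f+\mu\text{ in }\mathbb{R}^n,
\end{equation*}
because averaging against $[0,1]^n$ then forces $\mu=\fint_{[0,1]^n}F(A+D^2v)\,dx-\fint_{[0,1]^n}f\,dx$. I will construct the pair $(v,\mu)$ by a vanishing viscosity argument: for each $\varepsilon>0$, solve
\begin{equation*}
F(A+D^2 v_\varepsilon)+\varepsilon v_\varepsilon = f\text{ in }\mathbb{R}^n,\quad v_\varepsilon\text{ periodic},
\end{equation*}
which admits a unique $C^{2,\alpha}$ solution (Perron's method with constant super/subsolutions $\pm\|f-F(A)\|_\infty/\varepsilon$ gives a viscosity solution, Evans--Krylov upgrades it to $C^{2,\alpha}$, and the strictly monotone term $\varepsilon v$ yields uniqueness via the maximum principle).

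The heart of the argument is to extract two uniform-in-$\varepsilon$ estimates. First, at an interior maximum point $x_M$ of $v_\varepsilon$ one has $D^2v_\varepsilon(x_M)\le 0$, hence $F(A+D^2v_\varepsilon(x_M))\le F(A)$ by ellipticity, so $\varepsilon v_\varepsilon(x_M)\ge f(x_M)-F(A)$; the symmetric estimate at a minimum gives $\|\varepsilon v_\varepsilon\|_\infty\le|F(A)|+\|f\|_\infty$. Second, and this is the delicate step, I need $\osc(v_\varepsilon)\le C$ uniformly in $\varepsilon$. Introduce the shifted operator $\tilde F(M):=F(A+M)-F(A)$, uniformly elliptic with $\tilde F(0)=0$, and set $u_\varepsilon:=v_\varepsilon-\min v_\varepsilon\ge 0$. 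Then $u_\varepsilon$ satisfies $\tilde F(D^2u_\varepsilon)=f-F(A)-\varepsilon v_\varepsilon$, whose right-hand side is uniformly bounded thanks to the first estimate. Being a nonnegative solution of a uniformly elliptic equation with bounded right-hand side, $u_\varepsilon$ obeys the Krylov--Safonov Harnack inequality; chaining it along a finite cover of a fundamental domain by unit balls, anchored at a minimum point of $u_\varepsilon$, yields $\sup u_\varepsilon\le C$, and by periodicity $\osc(v_\varepsilon)\le C$ uniformly.

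The proof then closes by bootstrap and compactness. Set $\bar v_\varepsilon:=v_\varepsilon-\fint_{[0,1]^n} v_\varepsilon\,dx\in\mathbb{T}$, so $\|\bar v_\varepsilon\|_\infty\le C$. Rewriting the equation as $F(A+D^2\bar v_\varepsilon)=f-\varepsilon\bar v_\varepsilon-\varepsilon\fint v_\varepsilon$, whose right-hand side is uniformly $L^\infty$, Krylov--Safonov promotes this to a uniform $C^\alpha$ bound on $\bar v_\varepsilon$; the right-hand side is then uniformly $C^\alpha$, and Evans--Krylov (applicable since $F$ is concave/convex) yields a uniform $C^{2,\alpha}$ bound. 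A subsequence extracted by Arzel\`a--Ascoli in $C^{2,\alpha'}$ with $\alpha'<\alpha$ converges to some $v\in C^{2,\alpha}(\mathbb{R}^n)\cap\mathbb{T}$, while $\varepsilon\fint v_\varepsilon\to -\mu$ for some constant $\mu\in\mathbb{R}$; the limit $v$ satisfies $F(A+D^2v)=f+\mu$, which is (\ref{eq2}). For uniqueness, if $v_1,v_2$ both solve (\ref{eq2}) then $F(A+D^2v_1)-F(A+D^2v_2)$ is a constant, which Hadamard's formula rewrites as $a_{ij}(x)D_{ij}(v_1-v_2)$ with $a_{ij}$ uniformly elliptic and periodic; the torus maximum principle forces this constant to vanish, the strong maximum principle then forces $v_1-v_2$ to be constant, and the zero-mean condition closes it. I expect the main obstacle to be the uniform oscillation bound in the $\varepsilon\to 0$ limit; without it the additive constant $\mu$ cannot be captured, and the rest reduces to the standard Krylov--Safonov and Evans--Krylov machinery.
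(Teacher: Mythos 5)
Your overall strategy --- the vanishing-viscosity (ergodic) approximation $v_\varepsilon$, uniform estimates, and passage to the limit --- is a genuinely different route from the paper, which solves (\ref{eq2}) directly by the method of continuity: deform $f$ to $tf$, prove closedness of the solvability set via Evans--Krylov and a Harnack-based $L^\infty$ bound, and prove openness via the implicit function theorem plus a nested method of continuity for the linearized non-divergence equation on the torus. Both routes are classical in periodic homogenization and rest on the same a priori estimates; yours avoids the explicit linearization step but has to carry the extra parameter $\varepsilon$ and the uniform oscillation bound, which you correctly identify as the crux and handle by chaining the Krylov--Safonov Harnack inequality over a fundamental domain.

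There is, however, a concrete sign error that breaks the argument as written. With the paper's convention that $F$ is nondecreasing on $\mathcal{S}^{n\times n}$ (so $F(A+D^2\cdot)$ behaves like $+\Delta$), the proper regularization is
\begin{equation*}
F(A+D^2 v_\varepsilon)-\varepsilon v_\varepsilon=f,
\end{equation*}
not $F(A+D^2 v_\varepsilon)+\varepsilon v_\varepsilon=f$. With your sign the zeroth-order term has the wrong monotonicity for the comparison principle (compare $\Delta v+\varepsilon v=0$, which admits nontrivial periodic eigenfunctions), so the Perron construction of $v_\varepsilon$ from constant barriers, the uniqueness of $v_\varepsilon$ via ``the strictly monotone term,'' and the claimed bound $\|\varepsilon v_\varepsilon\|_\infty\le|F(A)|+\|f\|_\infty$ all fail. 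Indeed, your inequality at the maximum, $\varepsilon v_\varepsilon(x_M)\ge f(x_M)-F(A)$, faces the wrong way: it bounds $\max v_\varepsilon$ from below, not above, and the ``symmetric estimate at a minimum'' likewise bounds $\min v_\varepsilon$ from above, so the two together say nothing about $\|\varepsilon v_\varepsilon\|_\infty$. After flipping the sign the computation closes: at a maximum $x_M$ one has $D^2 v_\varepsilon(x_M)\le 0$, hence $\varepsilon v_\varepsilon(x_M)=F(A+D^2 v_\varepsilon(x_M))-f(x_M)\le F(A)-f(x_M)$, and symmetrically $\varepsilon v_\varepsilon(x_m)\ge F(A)-f(x_m)$ at a minimum, giving $\|\varepsilon v_\varepsilon\|_\infty\le|F(A)|+\|f\|_\infty$. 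With the sign corrected, the oscillation bound, the Krylov--Safonov/Evans--Krylov bootstrap, the Arzel\`a--Ascoli extraction, and the uniqueness argument via Hadamard's formula and the strong maximum principle are all sound and the proof goes through.
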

\bigskip
\begin{rmk}\label{rej1}
Since the term $\fint_{[0,1]^n}F(A+D^2v)$ in (\ref{eq2}) is not a local one, {\it weak solutions} can not be used conveniently, that is the solution $v$ here will be a function of $C^2$. When we conceal this term later, viscosity solution can be adopted and then the smooth assumption on $F$ and $f$ can be relaxed via a smooth approximation and the H\"{o}lder estimate. Please see e.g., the following Corollary \ref{th4} and its proofs.
\end{rmk}

\bigskip
 \begin{cor}\label{th4}
 Let $F\in C(\mathcal{S}^{n\times n})$   be  concave (convex) and uniformly elliptic and  $f\in  C(\mathbb{R}^n)$ be periodic. Then for any $A\in \mathcal{S}^{n\times n}$, there exists a unique $\beta\in \mathbb{R}$ such that
\begin{equation*}
  F(A+D^2v)-\beta=f-\fint_{[0,1]^n}f(x)\,dx
\end{equation*}
has a unique viscosity solution  $v\in \mathbb{T}$.
 \end{cor}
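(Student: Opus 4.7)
The plan is to follow the route indicated in Remark~\ref{rej1}: regularize $F$ and $f$, invoke Theorem~\ref{th3}, and pass to the viscosity limit using interior H\"older estimates. Take a standard mollifier $\rho_\epsilon$ on $\mathcal{S}^{n\times n}$ and set $F_\epsilon := F*\rho_\epsilon$; convolution with a nonnegative kernel preserves concavity (respectively convexity) and uniform ellipticity with the same constants $\lambda,\Lambda$, so $F_\epsilon\in C^\infty(\mathcal{S}^{n\times n})$ satisfies the hypotheses of Theorem~\ref{th3}. Smooth $f$ analogously to obtain periodic $f_\epsilon\in C^\alpha(\mathbb{R}^n)$ with the same mean. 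Theorem~\ref{th3} then provides $v_\epsilon\in C^2(\mathbb{R}^n)\cap\mathbb{T}$ and a constant $\beta_\epsilon := \fint_{[0,1]^n}F_\epsilon(A+D^2 v_\epsilon)\,dx$ such that $F_\epsilon(A+D^2 v_\epsilon) = f_\epsilon - \fint_{[0,1]^n} f_\epsilon\,dx + \beta_\epsilon$.

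The core of the argument is extracting uniform-in-$\epsilon$ estimates. Evaluating the equation at a max point $x^+$ and a min point $x^-$ of $v_\epsilon$ (which exist by periodicity) and using $D^2 v_\epsilon(x^+)\le 0\le D^2 v_\epsilon(x^-)$ together with ellipticity of $F_\epsilon$, I obtain $|\beta_\epsilon|\le |F_\epsilon(A)| + 2\|f\|_{L^\infty}$, bounded uniformly in $\epsilon$. For a uniform $L^\infty$ bound on $v_\epsilon$, I exploit the zero-mean periodicity together with the Krylov--Safonov Harnack inequality: the nonnegative translate $\max v_\epsilon - v_\epsilon$ vanishes at $x^+$ and satisfies Pucci-type inequalities with uniformly bounded data, so a Harnack chain across a fundamental domain yields a uniform oscillation bound, which combined with the zero-mean condition forces $\|v_\epsilon\|_{L^\infty}\le C$. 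The interior H\"older estimate for uniformly elliptic viscosity solutions then promotes this to a uniform $C^\alpha(\mathbb{R}^n)$ bound. Arzel\`a--Ascoli extracts a subsequence with $v_\epsilon\to v$ locally uniformly and $\beta_\epsilon\to\beta$; the limit $v$ is periodic with zero mean, hence lies in $\mathbb{T}$. Since $F_\epsilon\to F$ locally uniformly on $\mathcal{S}^{n\times n}$, stability of viscosity solutions yields that $v$ solves the required equation with this $\beta$.

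For uniqueness, suppose $(v_1,\beta_1)$ and $(v_2,\beta_2)$ are two solution pairs in $\mathbb{T}\times\mathbb{R}$. The standard viscosity calculus for uniformly elliptic operators gives $\mathcal{M}^+(D^2(v_1-v_2))\ge \beta_1-\beta_2$ in the viscosity sense. If $\beta_1>\beta_2$, then the periodic function $v_1-v_2$ attains its maximum at some $x_0$, and the constant function $\phi\equiv (v_1-v_2)(x_0)$ is a legitimate $C^2$ test function from above at $x_0$, yielding $\mathcal{M}^+(0)=0\ge \beta_1-\beta_2>0$, a contradiction; hence $\beta_1=\beta_2$. With equal constants, $w:=v_1-v_2$ satisfies $\mathcal{M}^+(D^2w)\ge 0\ge \mathcal{M}^-(D^2w)$ in the viscosity sense; the strong maximum principle for fully nonlinear uniformly elliptic operators, applied at the maximum of $w$ on the torus, forces $w$ to be constant, and the zero-mean condition then gives $w\equiv 0$. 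The convex case is identical with the roles of $\mathcal{M}^+$ and $\mathcal{M}^-$ interchanged.

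I expect the main obstacle to be the uniform $L^\infty$ bound on $v_\epsilon$: the interior H\"older estimate alone only yields seminorm control, and turning the zero-mean condition into a genuine supremum estimate requires a Harnack chain that exploits the compactness of the torus. Once this bound is in hand, the stability and uniqueness arguments are essentially standard consequences of Krylov--Safonov theory and the viscosity comparison principle.
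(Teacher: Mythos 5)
Your proposal follows essentially the same route as the paper's proof: mollify $F$ and $f$, invoke Theorem~\ref{th3}, bound $\beta_\epsilon$ by evaluating the equation at extremal points of $v_\epsilon$, use the Harnack inequality across a period cell together with the interior H\"older estimate and the zero-mean normalization to get a uniform $C^\alpha$ bound, pass to a viscosity limit, and conclude uniqueness by testing at an interior extremum and the strong maximum principle. The only superficial difference is that you establish uniqueness of $\beta$ before uniqueness of $v$, whereas the paper deduces $v=w$ first and then $\beta=\tilde\beta$; the two arguments are interchangeable.
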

 \bigskip
\begin{rmk}\label{rm1}
Due to the uniqueness of $\beta$ in Corollary \ref{th4}, we define the homogenization operator $\bar{F}: \mathcal{S}^{n\times n} \rightarrow \mathbb{R}$ as
$$A\mapsto \bar{F}(A)=\beta.$$ 
Here we emphasize the dependence of  $\bar F$ on $A$ and actually, it may also depends on $f$.  From Theorem \ref{th3} and Corollary \ref{th4}, we can see that  $\bar{F}(A)=\fint_{[0,1]^n}F(A+D^2v)\,dx$.
 In particular, if $F$ is the trace operator, then $\bar{F}$ is obviously the trace operator itself.
 \end{rmk}
 \bigskip
Consequently, we give the sufficient and necessary condition for the existence of quadratic  growth solutions of (\ref{eq1}).
\bigskip
\begin{thm}\label{th1}
Let $F\in C(\mathcal{S}^{n\times n})$   be  concave (convex) and uniformly elliptic and  $f\in  C(\mathbb{R}^n)$ be periodic. Then for any $A\in \mathcal{S}^{n\times n}$, $b\in \mathbb{R}^n$  and $c\in \mathbb{R}$, (\ref{eq1}) has a viscosity solution $u$ satisfying $u(x)=\frac{1}{2}x^{T}Ax+b\cdot x+c+v(x)$ for some  $v\in \mathbb{T}$ if and only if $\bar{F}(A)=\fint_{[0,1]^n}f(x)\,dx$ . In particular, $v$ is uniquely determined by $A$.
\end{thm}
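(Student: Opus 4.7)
The plan is to reduce Theorem \ref{th1} to Corollary \ref{th4} via the substitution $u(x)=\frac{1}{2}x^{T}Ax+b\cdot x+c+v(x)$. Since the quadratic polynomial contributes $A$ to the Hessian (in both the classical and viscosity senses) and $b\cdot x+c$ contributes nothing, the equation $F(D^2 u)=f$ is equivalent to $F(A+D^2 v)=f$ understood as a viscosity equation for $v$. Because $A$, $b$, $c$ are arbitrary and do not affect the argument, everything reduces to identifying when $F(A+D^2 v)=f$ admits a solution $v\in\mathbb{T}$.

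For the ``only if'' direction, suppose such a $u$ exists. Then $v\in\mathbb{T}$ is a viscosity solution of $F(A+D^2 v)=f$. Since the right-hand side equals the continuous periodic function $f$, trivially
\begin{equation*}
F(A+D^2 v)-\fint_{[0,1]^n}f\,dx=f-\fint_{[0,1]^n}f\,dx.
\end{equation*}
Comparing with the equation in Corollary \ref{th4}, and using the uniqueness of the constant $\beta$ asserted there (together with the identification $\bar F(A)=\beta$ from Remark \ref{rm1}), we conclude $\bar F(A)=\fint_{[0,1]^n}f\,dx$.

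For the ``if'' direction, assume $\bar F(A)=\fint_{[0,1]^n}f\,dx$. Apply Corollary \ref{th4} to obtain a viscosity solution $v\in\mathbb{T}$ of
\begin{equation*}
F(A+D^2 v)-\bar F(A)=f-\fint_{[0,1]^n}f\,dx.
\end{equation*}
Substituting the hypothesis cancels the averages and gives $F(A+D^2 v)=f$ in the viscosity sense; then $u=\frac{1}{2}x^{T}Ax+b\cdot x+c+v$ is the desired viscosity solution of (\ref{eq1}). Uniqueness of $v$ follows because any two admissible $v_1,v_2\in\mathbb{T}$ must both satisfy the Corollary \ref{th4} equation with the same $\beta=\bar F(A)$, and that corollary gives a unique solution in $\mathbb{T}$.

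I do not expect a serious obstacle here, since the statement is essentially a repackaging of Corollary \ref{th4}. The only point requiring mild care is the viscosity-solution bookkeeping: checking that adding a quadratic polynomial to a viscosity solution shifts the Hessian exactly by $A$, and that a viscosity equation whose right-hand side is a continuous periodic function may be averaged as a pointwise identity. Both are standard, so the proof should be short.
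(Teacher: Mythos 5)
Your proof is correct and takes essentially the same approach as the paper, which also reduces Theorem \ref{th1} to Corollary \ref{th4}. The only cosmetic difference is in the necessity direction: the paper argues by contradiction, building a comparison function $w$ from Corollary \ref{th4} and invoking the strong maximum principle, whereas you directly cite the uniqueness of $\beta$ from Corollary \ref{th4} (which is itself established by that same strong maximum principle), so the two routes are substantively identical.
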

\bigskip
The following  Liouville theorem indicates that any quadratic  growth solution of (\ref{eq1})  must be a quadratic polynomial plus a periodic function.
\bigskip
 \begin{thm}\label{th2}
 Let $F\in C(\mathcal{S}^{n\times n})$   be    uniformly elliptic and  $f\in  C(\mathbb{R}^n)$ be periodic.
 Assume that $u$ is a viscosity solution of (\ref{eq1}) and satisfies
 \begin{equation}\label{eq36}
   |u(x)|\leq C(1+|x|^2),\ \ \ x\in \mathbb{R}^n
 \end{equation}
 for some constant $C>0$. Then, if either $n\geq 3$ and $F$ is concave (convex) or $n=2$,
 there exist some $A\in \mathcal{S}^{n\times n}$ with $\bar{F}(A)=\fint_{[0,1]^n}f(x)\,dx$, $b\in \mathbb{R}^n$, $c\in \mathbb{R}$ and $v\in \mathbb{T}$ such that $$u(x)=\frac{1}{2}x^{T}Ax+b\cdot x+c+v(x).$$
 \end{thm}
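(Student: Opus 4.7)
The plan is to work with the translation increments $\phi_z(x):=u(x+z)-u(x)$ for $z\in\mathbb{Z}^n$: I will show that each $\phi_z$ is affine in $x$, and then read off $A$, $b$ and the periodic corrector $v$ from the linear structure of the map $z\mapsto\phi_z$. The first ingredient is a global Hessian bound for $u$. Applying the interior Caffarelli/Evans--Krylov estimate---Evans--Krylov when $F$ is concave (or convex) and $n\geq 3$, and its two-dimensional uniformly elliptic counterpart when $n=2$---to the rescaled function $u_R(y):=u(Ry)/R^2$ on $B_1$, the quadratic growth (\ref{eq36}) gives $\|u_R\|_{L^\infty(B_1)}\leq C$ and consequently $\|D^2 u_R\|_{L^\infty(B_{1/2})}\leq C$ with $C$ independent of $R$; unscaling yields $\|D^2 u\|_{L^\infty(\mathbb{R}^n)}\leq C$ and hence $|\nabla u(x)|\leq C(1+|x|)$. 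This is the only step where the dichotomy in the hypotheses is needed.

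For each $z\in\mathbb{Z}^n$, the periodicity of $f$ makes $u(\cdot+z)$ a viscosity solution of the same equation as $u$, so by uniform ellipticity $\phi_z$ solves the Pucci extremal inequalities $\mathcal{M}^-(D^2\phi_z)\leq 0\leq\mathcal{M}^+(D^2\phi_z)$ in the viscosity sense, while the Lipschitz bound on $\nabla u$ above yields the linear-in-$x$ growth $|\phi_z(x)|\leq C_z(1+|x|)$. I would then apply the Caffarelli interior $C^{1,\alpha}$ estimate for Pucci extremal solutions with zero right-hand side to the rescaled function $\phi_z^R(y):=\phi_z(Ry)/R$---which satisfies the same inequalities by the positive homogeneity of $\mathcal{M}^\pm$ and is uniformly bounded in $B_1$ by $C_z$---to get $[\nabla\phi_z^R]_{C^\alpha(B_{1/2})}\leq CC_z$; unscaling produces $[\nabla\phi_z]_{C^\alpha(B_{R/2})}\leq CC_z R^{-\alpha}$, and sending $R\to\infty$ forces $\nabla\phi_z$ to be constant. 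Hence $\phi_z(x)=C_z\cdot x+d_z$ is affine.

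The remainder is algebraic. The cocycle identity $\phi_{z+z'}(x)=\phi_z(x+z')+\phi_{z'}(x)$ gives $C_{z+z'}=C_z+C_{z'}$, so $C_z=Az$ for some $A\in\mathbb{R}^{n\times n}$, and swapping $z$ and $z'$ in the same identity yields $z^T A z'=(z')^T A z$ on $\mathbb{Z}^n\times\mathbb{Z}^n$, whence $A\in\mathcal{S}^{n\times n}$. Differentiating $\phi_z(x)=Az\cdot x+d_z$ in $x$ shows $\nabla u - Ax$ is $\mathbb{Z}^n$-periodic, and a short computation gives that $\tilde v(x):=u(x)-\tfrac{1}{2}x^T A x$ satisfies $\tilde v(x+z)-\tilde v(x)=d_z-\tfrac{1}{2}z^T A z$, a quantity constant in $x$ and additive in $z$, hence equal to $b\cdot z$ for some $b\in\mathbb{R}^n$; then $v(x):=\tilde v(x)-b\cdot x$ is $\mathbb{Z}^n$-periodic, and subtracting its mean gives $v\in\mathbb{T}$ together with the decomposition $u(x)=\tfrac{1}{2}x^T A x+b\cdot x+c+v(x)$. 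The compatibility $\bar F(A)=\fint_{[0,1]^n}f\,dx$ then follows from the necessity direction of Theorem \ref{th1}. The main obstacle I foresee is the Pucci-Liouville step of the second paragraph: it relies on applying the Caffarelli $C^{1,\alpha}$ estimate at every scale to produce the scale-invariant decay $R^{-\alpha}$, which in turn depends essentially on the global $C^{1,1}$ bound established in the first paragraph---and hence on the concavity/two-dimensional dichotomy in the hypotheses.
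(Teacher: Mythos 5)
Your Step~2 contains a genuine gap for $n\geq 3$: there is no interior $C^{1,\alpha}$ estimate for general functions in the Pucci class $S(\lambda,\Lambda,0)$. The function $\phi_z=u(\cdot+z)-u$, being a difference of two viscosity solutions, only lies in $S(\lambda/n,\Lambda,0)$; it does \emph{not} solve a single uniformly elliptic equation $G(D^2\phi_z)=0$, which is what Caffarelli's $C^{1,\alpha}$ estimate (\cite[Theorem 8.3]{CC95}) actually requires. Membership in $S(\lambda,\Lambda,0)$ is equivalent to solving some $a_{ij}(x)D_{ij}\phi_z=0$ with merely measurable $a_{ij}$, and for $n\geq 3$ such solutions are only $C^\alpha$ by Krylov--Safonov --- Safonov's classical counterexamples show one cannot upgrade this to $C^{1,\alpha}$ (or even $C^1$) with constants independent of the modulus of continuity of the coefficients. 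So the scaling step $[\nabla\phi_z]_{C^\alpha(B_{R/2})}\leq CC_zR^{-\alpha}\to 0$ is not justified when $n\geq3$. The only place this reasoning is valid is $n=2$, where the Morrey--Nirenberg interior $C^{1,\alpha}$ theory for two-dimensional linear uniformly elliptic equations with measurable coefficients applies --- and indeed this is exactly how the paper handles $n=2$ (via the functions $v_k(x)=u(x+e_k)-u(x)$). Concavity of $F$ does \emph{not} repair the $n\geq 3$ argument: it gives one-sided linear inequalities $L_1\phi_z\geq 0\geq L_2\phi_z$ with two different coefficient matrices, which is still not a single equation and does not yield $C^{1,\alpha}$ control of $\phi_z$.

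A secondary issue is your Step~1: for $f$ merely in $C(\mathbb{R}^n)$, Evans--Krylov does not give a scale-invariant $C^{1,1}$ bound (the estimate requires $f\in C^\alpha$, and the Hölder seminorm of $f(R\cdot)$ blows up as $R\to\infty$); only $W^{2,p}$ estimates survive. Fortunately, the only consequence you use is the gradient bound $|\nabla u(x)|\leq C(1+|x|)$, which follows directly from the interior $C^{1,\alpha}$ estimate for $F(D^2u_R)=f(R\cdot)$ with $f\in L^\infty$ and requires no concavity at all --- so your real use of the dichotomy is not where you think it is. For $n\geq 3$ the paper takes a fundamentally different route: it determines $A$ via a blow-down to the homogenized equation $\bar F(D^2Q)=\fint f$ (Lemma~\ref{le5}), exploits concavity to show $F_{ij}(D^2u)D_{ij}(\Delta_e^2 u)\geq 0$ a.e.\ (Lemma~\ref{le3}) so that the second-order differences $\Delta_e^2u$ are bounded above with the exact supremum $e^TAe/\|e\|^2$ (Lemmas~\ref{le4}--\ref{le6}), and finally shows $h=u-Q-b\cdot x-v$ is bounded above and applies the Harnack inequality --- no $C^{1,\alpha}$ estimate for Pucci-class functions is invoked. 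Your cocycle algebra in the third paragraph is correct and would finish the proof if $\phi_z$ were known to be affine, but establishing that affinity for $n\geq 3$ is precisely the hard step, and it cannot be reduced to a Liouville theorem for $S(\lambda,\Lambda,0)$ with linear growth.
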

\bigskip
\begin{rmk}\label{rej4}
By our method, the corresponding results can be obtained for the linear elliptic equation of non-divergence form $a_{ij}(x)D_{ij}v(x)=f(x)$ in $\mathbb{R}^n$ with periodic coefficients and right hand terms (cf. the following Remarks \ref{rej2} and \ref{rej3}). Actually, in the proof of Theorem \ref{th3}, the linear equations are considered as using the implicit function theorem.
\end{rmk}
\bigskip
\noindent
\textbf{1.2. Uniformly elliptic equations on exterior domains.} Theorem \ref{th1} enables us to establish the following existence theorem for the Dirichlet problem on exterior domains with prescribed asymptotic behavior at infinity.
\bigskip
\begin{thm}\label{th10}
Let $F\in C(\mathcal{S}^{n\times n})$   be  concave (convex) and uniformly elliptic and $f\in  C(\mathbb{R}^n)$ be periodic. Assume that $A\in \mathcal{S}^{n\times n}$ satisfies $\bar{F}(A)=\fint_{[0,1]^n}f(x)\,dx$ and $\Omega\subset \mathbb{R}^n$ is a domain satisfying a uniform interior sphere condition. Then for any $b\in \mathbb{R}^n$ and $\varphi\in C(\partial \Omega)$, there exist some  $c\in \mathbb{R}$ and $v\in \mathbb{T}$ such that
there exists a  viscosity solution $u\in C(\mathbb{R}^n \backslash \overline{\Omega})$  of
\begin{equation*}
   \left\{
\begin{aligned}
   &F(D^2u)=f\ \ \mbox{in}\ \ \mathbb{R}^n \backslash \overline{\Omega}\\
&u=\varphi\ \  \mbox{on}\ \ \partial \Omega
\end{aligned}
\right.
 \end{equation*}
satisfying
$$\lim_{|x|\rightarrow \infty}\left(u(x)-\frac{1}{2}x^{T}Ax-b\cdot x-c-v(x)\right)=0,$$
where $v\in \mathbb{T}$ is a unique viscosity solution of  $F(A+D^2v)=f$ in $\mathbb{R}^n$. Furthermore, if   $\frac{\Lambda}{\lambda}< n-1$, we have the following estimate
\begin{equation}\label{eq39}
  \left|u(x)-\frac{1}{2}x^{T}Ax-b\cdot x-c-v(x)\right|\leq C|x|^{1-(n-1)\frac{\lambda}{\Lambda}}, \ \ \ x\in \mathbb{R}^n \backslash \overline{\Omega},
\end{equation}
where $C$ is a positive constant.
\end{thm}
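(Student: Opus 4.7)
The plan is to produce $u$ as the locally uniform limit of solutions to a sequence of Dirichlet problems on expanding annuli, and then analyze the asymptotic behavior of $u-w$ via Pucci comparison barriers, where $w(x):=\frac{1}{2}x^{T}Ax+b\cdot x+v(x)$ is the entire viscosity solution supplied by Theorem~\ref{th1} (the hypothesis $\bar F(A)=\fint_{[0,1]^n}f$ is precisely what makes $F(D^2 w)=f$ hold in $\mathbb R^n$, with $v\in\mathbb T$ the unique periodic correction from Corollary~\ref{th4}). Fix $R_0$ with $\overline\Omega\subset B_{R_0}$. For each $R>R_0$, Perron's method produces a continuous viscosity solution $u_R$ of
\begin{equation*}
F(D^2 u_R)=f\ \text{in}\ B_R\setminus\overline\Omega,\quad u_R=\varphi\ \text{on}\ \partial\Omega,\quad u_R=w\ \text{on}\ \partial B_R,
\end{equation*}
with barriers at $\partial\Omega$ coming from the uniform interior sphere condition and at $\partial B_R$ from smoothness.

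The key uniform estimate is that the difference $h_R:=u_R-w$ is sandwiched between Pucci inequalities in the viscosity sense:
\begin{equation*}
\mathcal{M}^{-}(D^2 h_R)\le F(D^2 u_R)-F(D^2 w)=0\le \mathcal{M}^{+}(D^2 h_R)\quad\text{in}\ B_R\setminus\overline\Omega.
\end{equation*}
The ABP maximum and minimum principles then give $\|h_R\|_{L^\infty(B_R\setminus\overline\Omega)}\le\|\varphi-w\|_{L^\infty(\partial\Omega)}=:K$, independent of $R$. Krylov--Safonov interior Hölder estimates for Pucci operators, together with standard boundary Hölder estimates from the interior sphere condition, deliver local equicontinuity; a diagonal subsequence extracts a locally uniform limit $u$, a viscosity solution of $F(D^2u)=f$ in $\mathbb R^n\setminus\overline\Omega$ with $u=\varphi$ on $\partial\Omega$ and $|u-w|\le K$ everywhere.

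It remains to analyze $h:=u-w$ at infinity. A direct eigenvalue computation for $\psi(x)=|x|^\alpha$ yields
\begin{equation*}
\mathcal{M}^{+}(D^2\psi)=\alpha\bigl[\Lambda(\alpha-1)+(n-1)\lambda\bigr]|x|^{\alpha-2},
\end{equation*}
which vanishes at the critical exponent $\alpha_0:=1-(n-1)\lambda/\Lambda$. When $\Lambda/\lambda<n-1$ we have $\alpha_0<0$, so perturbing $\alpha$ slightly into $(\alpha_0,0)$ produces a decaying strict $\mathcal{M}^{+}$-supersolution; applying the same idea to $-h$ (which satisfies the symmetric Pucci sandwich) yields a matching lower barrier. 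Comparing $h$ with suitable constants plus multiples of these barriers on the annuli $B_R\setminus B_{R_0}$ via the Pucci comparison principle, and letting $R\to\infty$, produces monotone two-sided bounds that force the existence of $c:=\lim_{|x|\to\infty}h(x)$ and the sharp decay $|h-c|\le C|x|^{\alpha_0}$, which is \eqref{eq39}. In the complementary regime $\Lambda/\lambda\ge n-1$ there is no decaying radial Pucci barrier, so existence of the limit will be obtained by rescaling--Liouville: if $\osc_{|x|\ge R}h\not\to 0$, the rescaled functions $h_k(z):=h(R_k z)$ converge, via Krylov--Safonov, to a bounded nonconstant viscosity function $\tilde h$ on $\mathbb R^n\setminus\{0\}$ that still satisfies both Pucci inequalities; boundedness removes the isolated singularity at the origin, and the Harnack inequality for Pucci-sandwiched functions, applied to $\tilde h-\inf\tilde h$ on $B_R\subset\mathbb R^n$ as $R\to\infty$, forces $\tilde h$ to be constant, contradicting the nonvanishing oscillation.

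The principal obstacle is this asymptotic step: extracting the sharp exponent $\alpha_0=1-(n-1)\lambda/\Lambda$ from the Pucci eigenvalue computation and separately handling the two regimes $\Lambda/\lambda<n-1$ (explicit decay via barrier) and $\Lambda/\lambda\ge n-1$ (qualitative limit via rescaling--Liouville). The Dirichlet construction, the uniform Pucci sandwich, and the compactness step are essentially standard once one records that $h_R$ is trapped between the two Pucci operators.
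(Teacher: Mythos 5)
Your construction of $u$ is essentially the paper's: exhaust by $B_R\setminus\overline{\Omega}$ with data $w$ on $\partial B_R$, trap $u_R$ between $w\pm\|\varphi-w\|_{L^{\infty}(\partial\Omega)}$ (the paper does this by comparison with $w\pm\bar C$, you by the Pucci sandwich plus ABP — same thing), pass to a locally uniform limit, and recover the boundary value $\varphi$ by barriers built from the sphere condition (note the \emph{interior} sphere condition for $\Omega$ is an \emph{exterior} sphere condition for the domain where the PDE holds, and with $\varphi$ merely continuous you get a modulus of continuity rather than boundary H\"older, exactly as in the paper's explicit barrier construction). The genuine gap is in the asymptotic step, specifically your treatment of the regime $\Lambda/\lambda\ge n-1$. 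First, nonvanishing of the oscillation of $h=u-w$ over $\{|x|\ge R\}$ does not make the blow-down $\tilde h$ nonconstant: the oscillation may be realized across widely separated scales (a slow drift of $h$ through dyadic annuli), in which case the oscillation of $h_k$ on any fixed compact annulus tends to zero and every limit $\tilde h$ is constant, so no contradiction is reached; and even if one proves all blow-downs are constant, that alone does not yield existence of $\lim_{|x|\to\infty}h$. Second, the removability claim you invoke is false in exactly this regime: for $\Lambda/\lambda>n-1$ the function $\psi(x)=|x|^{\beta}$ with $\beta=1-(n-1)\lambda/\Lambda\in(0,1)$ is bounded and belongs to $S(\lambda,\Lambda,0)$ in the punctured ball, but it is not a supersolution across the origin (test it from below at $0$ with $\epsilon|x|^2$: $\mathcal{M}^-(2\epsilon I)>0$), so bounded membership in the Pucci class does not extend across an isolated point.

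The fix is the paper's (standard) argument, which needs no ratio condition: since $h\in S(\lambda,\Lambda,0)$ is bounded in $\mathbb{R}^n\setminus\overline{\Omega}$, apply the scale-invariant Harnack inequality on spheres $\partial B_r$ to the nonnegative function $\sup_{|x|\ge r/2}h-h$ along radii $r_k$ where $\max_{\partial B_{r_k}}h$ nearly attains $\limsup_{|x|\to\infty}h$; this forces $\min_{\partial B_{r_k}}h$ to converge to the same value, and the minimum principle on the annuli between consecutive $r_k$ then gives $\liminf_{|x|\to\infty}h=\limsup_{|x|\to\infty}h$, i.e.\ the limit $c$ exists. This is what the paper compresses into ``Harnack inequality and comparison principle,'' and the same Harnack step is also quietly needed in your $\Lambda/\lambda<n-1$ case: the radial barrier $|x|^{\alpha_0}$ with $\alpha_0=1-(n-1)\lambda/\Lambda$ (your eigenvalue computation is correct) yields the decay rate once the limit is known, but comparison on $B_R\setminus B_{R_0}$ alone involves $\sup_{\partial B_R}h$, whose convergence is exactly what is at stake, so it cannot by itself force $\limsup=\liminf$; the paper sidesteps this by quoting the ready-made comparison results of Armstrong--Sirakov--Smart and Lian--Zhang for the refined estimate \eqref{eq39}.
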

\bigskip
\begin{rmk}
\emph{(i).} The domain $\Omega$ is said to satisfy  a uniform interior sphere condition if there is a constant $\rho>0$ such that for any $x_0\in \partial \Omega$ there exists a ball $B_{\rho}(z_{x_0})\subset \Omega$ with $\overline{B_{\rho}(z_{x_0})}\cap \partial \Omega={x_0}$ for some $z_{x_0}\in \Omega$.

\emph{(ii).} Here $|x|^{1-(n-1)\frac{\lambda}{\Lambda}}$ is the fundamental solution of the Pucci's operators (see \cite{L01,ASS11} for more details).
\end{rmk}
\bigskip
As an extension of Liouville theorem (Theorem \ref{th2}), we investigate the asymptotic behavior at infinity of quadratic  growth viscosity solutions  of  (\ref{eq1}) in  exterior domains.
 \bigskip
 \begin{thm}\label{th9}
 Let $F\in C(\mathcal{S}^{n\times n})$ be concave (convex) and uniformly elliptic 
 and $f\in C^\alpha(\mathbb{R}^n)$  for some $0<\alpha<1$ be periodic.
 Assume that $u$ is a viscosity solution of
 $$F(D^2u)=f\ \ \ \mbox{in}\ \ \ \mathbb{R}^n \backslash \overline{B_1}$$
and satisfies
 \begin{equation*}
   |u(x)|\leq C|x|^2,\ \ \ x\in \mathbb{R}^n \backslash \overline{B_1}
 \end{equation*}
 for some constant $C>0$. Then, if   $\frac{\Lambda}{\lambda}< n-1$, there exist some $A\in \mathcal{S}^{n\times n}$ with $\bar{F}(A)=\fint_{[0,1]^n}f(x)\,dx$, $b\in \mathbb{R}^n$, $c\in \mathbb{R}$ and $v\in \mathbb{T}$ such that
 $$\left|u(x)-\frac{1}{2}x^{T}Ax-b\cdot x-c-v(x)\right| \leq C|x|^{1-(n-1)\frac{\lambda}{\Lambda}},\ \ \ x\in \mathbb{R}^n \backslash \overline{B_1}$$
for some cosnstant $C>0$, where $v\in \mathbb{T}$ is a unique viscosity solution of  $F(A+D^2v)=f$ in $\mathbb{R}^n$.
 \end{thm}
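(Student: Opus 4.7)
My plan is to first identify the quadratic matrix $A$ at infinity via a blow-down argument, then invoke Theorem~\ref{th10} to build a template exterior solution $w$ with that asymptotic, and finally bound the residual $u - w$ by the Pucci fundamental solution via iterated barrier comparisons.

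\textbf{Finding $A$ at infinity.} I rescale $u_R(x) = u(Rx)/R^2$, which is a viscosity solution of $F(D^2 u_R) = f(Rx)$ on $\{|x|\geq 1/R\}$ and satisfies $|u_R(x)| \leq C|x|^2$. Since $\|f(R\,\cdot)\|_{L^\infty}$ is bounded uniformly in $R$, the Krylov--Safonov--Caffarelli interior $C^{1,\alpha}$ estimate gives uniform bounds for $u_R$ on every compact subset of $\mathbb{R}^n \setminus \{0\}$, so I extract a subsequence $u_{R_k} \to u_\infty$ in $C^{1,\alpha'}_{\mathrm{loc}}(\mathbb{R}^n \setminus\{0\})$ with $|u_\infty(x)| \leq C|x|^2$. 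To identify the limit equation I use Evans' perturbed-test-function method: for a smooth $\phi$ touching $u_\infty$ at some $x_0 \ne 0$, set $A_0 = D^2 \phi(x_0)$ and replace $\phi$ by $\phi + R_k^{-2} v_{A_0}(R_k\,\cdot)$, where $v_{A_0} \in \mathbb{T}$ is the corrector supplied by Corollary~\ref{th4}; letting $k\to\infty$ yields $\bar F(D^2 u_\infty) = \fint_{[0,1]^n} f\, dx$ on $\mathbb{R}^n \setminus \{0\}$. The quadratic bound makes the origin a removable singularity, $\bar F$ inherits uniform ellipticity and concavity/convexity from $F$, and the right-hand side is constant, so the Liouville Theorem~\ref{th2} applied to the extended $u_\infty$ forces the periodic corrector to vanish; the rescaling also kills the lower-order terms, leaving $u_\infty(x) = \tfrac12 x^T A x$ with $\bar F(A) = \fint_{[0,1]^n} f\, dx$ and $A$ independent of the subsequence.

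\textbf{Template solution and residual estimate.} I feed this $A$ into Theorem~\ref{th10} with $\Omega = B_1$, any $b \in \mathbb{R}^n$, and boundary data $\varphi = u|_{\partial B_1}$ to obtain $w$ solving $F(D^2 w) = f$ in $\mathbb{R}^n \setminus \overline{B_1}$, matching $u$ on $\partial B_1$, with the decay $|w(x) - \tfrac12 x^T A x - b\cdot x - c - v(x)| \leq C|x|^{1-(n-1)\lambda/\Lambda}$ for some $c \in \mathbb{R}$ and $v \in \mathbb{T}$ provided by the theorem. Setting $\tilde u := u - w$, uniform ellipticity of $F$ gives $\mathcal{M}^-(D^2 \tilde u) \leq 0 \leq \mathcal{M}^+(D^2 \tilde u)$ in $\mathbb{R}^n \setminus \overline{B_1}$ with $\tilde u = 0$ on $\partial B_1$, and combining the blow-down for $u$ with the asymptotic for $w$ yields $\tilde u(x) = o(|x|^2)$. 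Iterating comparisons on annuli against barriers of the form $c_\pm |x|^\beta$ with exponents $\beta$ chosen from the Pucci spectrum then refines the growth in three stages: subquadratic growth becomes linear growth, then is bounded after subtracting an affine correction $\ell(x)$, and finally $|\tilde u(x) - \ell(x)| \leq C|x|^{1-(n-1)\lambda/\Lambda}$ using the fundamental-solution exponent, which is negative precisely under $\Lambda/\lambda < n-1$. Absorbing $\ell$ into the free $b\cdot x + c$ of Theorem~\ref{th10} produces the required parameters.

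\textbf{Main obstacle.} The delicate step is the blow-down: identifying that $u_\infty$ solves the homogenized equation. Uniform $C^{2,\alpha}$ interior estimates are unavailable because $[f(R\,\cdot)]_{C^\alpha}$ scales as $R^\alpha$ and blows up, so only $C^{1,\alpha}$ compactness is at my disposal; the homogenized equation then has to be extracted by Evans' perturbed-test-function technique, whose correctors are exactly what Corollary~\ref{th4} supplies. Once the leading quadratic is pinned down, the remaining iteration reduces to standard barrier analysis tailored to the Pucci fundamental solution.
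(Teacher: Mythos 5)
Your route (blow-down to find $A$, comparison with the exterior solution of Theorem \ref{th10}, then a barrier iteration for the residual) is genuinely different from the paper's, but it has two real gaps, and they sit exactly where the difficulty of the theorem lies. First, the blow-down only gives you, along one subsequence $R_k\to\infty$, convergence of $u_{R_k}$ to some quadratic $\frac12x^TAx$ with $\bar F(A)=\fint_{[0,1]^n}f\,dx$; you assert that $A$ is independent of the subsequence, but nothing in the $C^{1,\alpha}$ compactness or the perturbed-test-function argument yields this, and different subsequences could a priori converge to different quadratics on the level set of $\bar F$. Without uniqueness of the full blow-down you do not obtain $u(x)-\frac12x^TAx=o(|x|^2)$ as $|x|\to\infty$ (only smallness along the scales $R_k$), so the starting hypothesis of your residual analysis, $\tilde u=u-w=o(|x|^2)$, is not established.

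Second, even granting $\tilde u=o(|x|^2)$, the claimed three-stage improvement ``subquadratic $\Rightarrow$ linear $\Rightarrow$ bounded after an affine correction $\Rightarrow$ decay'' for a function $\tilde u\in S(\lambda,\Lambda,0)$ in an exterior domain is not a routine comparison with radial barriers $c_\pm|x|^\beta$: for $1<\beta<2$ all eigenvalues of $D^2|x|^\beta$ are positive, so $\mathcal M^\pm(D^2|x|^\beta)>0$ and such profiles are subsolutions of both extremal operators, hence give no upper barriers; no barrier family you exhibit produces the growth reduction, which is itself a nontrivial exterior Liouville/asymptotics statement. The paper is structured precisely to avoid both issues: it solves $F(D^2u_r)=f$ in the whole balls $B_r$ with data $u$ on $\partial B_r$, controls $|u_r-\bar u|$ (with $\bar u$ a $C^2$ modification of $u$ across $B_2$) by the explicit Pucci barrier $E(x)=\bar C|x|^{\frac12\left(1-(n-1)\frac{\lambda}{\Lambda}\right)}$, passes to an entire solution $w$ with $|u-w|\le E$ already decaying, applies the whole-space Liouville Theorem \ref{th2} to $w$, and then only needs the bounded-case refinement as in (\ref{eq39}). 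So in the paper the ``$o(|x|^2)$ to bounded'' step never arises and uniqueness of the blow-down is never needed. (A smaller point: your removability of the singularity of $u_\infty$ at the origin is asserted without justification and needs an argument or citation, e.g. of Labutin-type results, but that is a fixable detail compared with the two gaps above.)
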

\bigskip
\begin{rmk}
When $f$ is a constant, Li, Li and Yuan \cite{LLY20} obtained the asymptotic behavior at infinity under the smooth assumption on $F$. Subsequently, Lian and Zhang \cite{LZP} obtained an asymptotic  result  without the smooth assumption on $F$. As for Monge-Amp\`{e}re equations $\det(D^2u)=1$ in exterior domains, the asymptotic behavior at infinity was established by Caffarelli and Li \cite{CL03}, in which we can also refer to the detailed references for $n=2$. As a generalization of Caffarelli and Li's result, Teixeira and Zhang \cite{TZ16} investigated the case that the right hand is a perturbation of a periodic function.
\end{rmk}
\bigskip
\noindent
\textbf{1.3. Degenerate elliptic equations in the whole space.}
For degenerate elliptic operator $F$, we assume that $F$ is  concave on some open convex  set $\Gamma\subset \mathcal{S}^{n\times n}$. Let $ \widetilde{\Gamma}$ be an open convex subset of $\Gamma$.

The existence and Liouville type results will be generalised to  degenerate elliptic equations. For this purpose, we suppose:

\textbf{(H1):} Given  $A\in \widetilde{\Gamma}$ and $f\in C^\alpha(\mathbb{R}^n)\cap \mathbb{T}$ $(0<\alpha<1)$, for any $0\leq t\leq 1$, if $v_t\in C^{2}(\mathbb{R}^n)\cap \mathbb{T}$  with $A+D^2v_t\in \Gamma$  is a   solution of
\begin{equation*}
  F(A+D^2v_t)-\fint_{[0,1]^n}F(A+D^2v_t)\,dx=tf,
\end{equation*}
then $A+D^2v_t$ for all $0\leq t\leq 1$ lie in a compact  set $K$ of $\Gamma$.

\textbf{(H2):} $F$ is uniformly elliptic with respect to $u$ in the sense that if $D^2u$ lies in a compact set $K$ of $\Gamma$,
then there exists a constant $0<\lambda_K \leq 1$ depending only on $K$ such that
$$\lambda_K |\xi|^2\leq F_{ij}(D^2u)\xi_i \xi_j\leq \frac{1}{\lambda_K}|\xi|^2$$
for all $\xi\in \mathbb{R}$.
\bigskip
\begin{thm}\label{th11}
Let $F\in C^2(\mathcal{S}^{n\times n})$   satisfy (H1) and (H2)
 and  $f\in C^\alpha(\mathbb{R}^n)\cap \mathbb{T}$  for some $0<\alpha<1$. Then for any $A\in \widetilde{\Gamma}$,
\begin{equation*}
F(A+D^2v)-\fint_{[0,1]^n}F(A+D^2v)\,dx=f
\end{equation*}
has a unique  solution $v\in C^{2}(\mathbb{R}^n)\cap \mathbb{T}$ with $A+D^2v\in \Gamma$.
\end{thm}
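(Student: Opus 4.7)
The strategy is the standard method of continuity, parallel to the proof of Theorem \ref{th3} but localized to the region where $F$ is elliptic. Consider the family
\begin{equation*}
F(A+D^2v_t)-\fint_{[0,1]^n}F(A+D^2v_t)\,dx=tf,\qquad t\in[0,1],
\end{equation*}
and let $S=\{t\in[0,1]:\text{the equation admits some } v_t\in C^{2,\alpha}(\mathbb{R}^n)\cap\mathbb{T} \text{ with } A+D^2v_t\in\Gamma\}$. I would show that $S$ is nonempty, open and closed in $[0,1]$, which forces $1\in S$.

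Non-emptiness is immediate: $t=0$ admits $v_0\equiv 0$ since $A\in\widetilde{\Gamma}\subset\Gamma$. For openness, at a fixed $t_0\in S$ the linearization of the equation at $v_{t_0}$ is
\begin{equation*}
w\longmapsto F_{ij}(A+D^2v_{t_0})D_{ij}w-\fint_{[0,1]^n}F_{ij}(A+D^2v_{t_0})D_{ij}w\,dx,
\end{equation*}
which by (H2) is a linear operator with periodic, uniformly elliptic coefficients on the compact set $A+D^2v_{t_0}\in\Gamma$. As indicated in Remark \ref{rej4}, the linear periodic theory (the analogue of Theorem \ref{th3} for linear non-divergence equations) yields that this map is an isomorphism from $C^{2,\alpha}(\mathbb{R}^n)\cap\mathbb{T}$ onto $C^{\alpha}(\mathbb{R}^n)\cap\mathbb{T}$. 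Then the implicit function theorem in Banach spaces opens up a neighborhood of $t_0$ in $S$.

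The crux is closedness. Take $t_k\to t^\ast$ with $v_{t_k}\in C^{2,\alpha}\cap\mathbb{T}$ solving the $t_k$-equation. Hypothesis (H1) furnishes a compact $K\subset\Gamma$ containing $A+D^2v_{t_k}$ for all $k$; in particular $\|D^2v_{t_k}\|_{L^\infty}$ is uniformly bounded, and via $\fint v_{t_k}=0$ plus periodicity also $\|v_{t_k}\|_{L^\infty}$ is uniformly bounded. By (H2) the equations are uniformly elliptic with ellipticity constant $\lambda_K$; since $F$ is concave on $\Gamma$, the Evans--Krylov theorem, applied locally after subtracting the (uniformly bounded) constant $\fint F(A+D^2v_{t_k})\,dx$, gives an interior $C^{2,\alpha}$ estimate, which by the periodicity of $v_{t_k}$ upgrades to a uniform $C^{2,\alpha}(\mathbb{R}^n)$ estimate. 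Arzel\`a--Ascoli and a diagonal argument produce a subsequential limit $v_{t^\ast}\in C^{2,\alpha}\cap\mathbb{T}$ solving the $t^\ast$-equation, with $A+D^2v_{t^\ast}\in K\subset\Gamma$. This shows $t^\ast\in S$.

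For uniqueness, suppose $v^{(1)},v^{(2)}$ are two solutions and let $w=v^{(1)}-v^{(2)}\in\mathbb{T}$. Since $A+D^2v^{(i)}$ both lie in $\Gamma$, a convex combination argument produces $a_{ij}(x)=\int_0^1F_{ij}(A+D^2v^{(2)}+sD^2w)\,ds$, which by (H2) is uniformly elliptic; the equation becomes $a_{ij}(x)D_{ij}w=c$ with $c$ the corresponding difference of means. By periodicity, $w$ attains its maximum and minimum, so evaluating at an interior maximum/minimum yields $c=0$; then the strong maximum principle forces $w$ constant, and the zero-mean normalization gives $w\equiv 0$. The principal obstacle in the whole argument is the closedness step, where one must combine the qualitative confinement provided by (H1) with concavity of $F$ to invoke the Evans--Krylov estimate uniformly along the path; once this is secured, the remaining ingredients are standard.
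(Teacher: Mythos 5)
Your proposal is correct and follows essentially the same route as the paper: the method of continuity, with (H1) providing the Hessian confinement and (H2) plus concavity on $\Gamma$ enabling a uniform Evans--Krylov estimate for closedness, the implicit function theorem together with the linear periodic theory from the proof of Theorem \ref{th3} for openness, and the strong maximum principle for uniqueness. Your extra details (the direct $L^\infty$ bound from the Hessian bound and zero mean, and the frozen-coefficient argument for uniqueness) simply flesh out steps the paper declares clear.
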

\bigskip

On account of  Remark \ref{rm1}, we can similarly define the  homogenization operator $\bar{F}: \widetilde{\Gamma} \rightarrow \mathbb{R}$  for degenerate elliptic operators.
In order to obtain  the Liouville type result for degenerate elliptic equations with periodicity, we need  suppose:


\textbf{(H3):} The homogenization operator $\bar{F}$ possesses a Liouville property, i.e., if $u$ is  a quadratic  growth viscosity solution of $\bar{F}(D^2u)=c$ in $\mathbb{R}^n$  for some $c\in\mathbb{R}$ with $D^2u\in \widetilde{\Gamma}$ in the weak sense, then $u$ must be a quadratic polynomial.  We say that $D^2u\in \widetilde{\Gamma}$ in the weak sense if $u-\varphi$ has a local maximum at $x_0\in \mathbb{R}^n$ with $\varphi \in C^2(\mathbb{R}^n)$, then we have
$D^2\varphi(x_0) \in \widetilde{\Gamma}$.
\bigskip
\begin{thm}\label{th8}
Let $F\in C^2(\mathcal{S}^{n\times n})$ satisfy (H1)-(H3) . Assume that $u\in C^2(\mathbb{R}^n)$ with $D^2u\in \widetilde{\Gamma}$ is a solution of
$$F(D^2u)=f\ \ \ \mbox{in}\ \ \ \mathbb{R}^n$$
for some periodic function $f\in C^\alpha(\mathbb{R}^n)$  $(0<\alpha<1)$. Then, if $D^2u$ is bounded in $\mathbb{R}^n$, there exist some $A\in \mathcal{S}^{n\times n}$ with $\bar{F}(A)=\fint_{[0,1]^n}f(x)\,dx$, $b\in \mathbb{R}^n$, $c\in \mathbb{R}$ and $v\in \mathbb{T}$ such that $$u(x)=\frac{1}{2}x^{T}Ax+b\cdot x+c+v(x).$$
In particular, $v$ is uniquely determined by $A$.
\end{thm}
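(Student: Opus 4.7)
The plan is a blow-down/homogenization argument that reduces the claim to a Liouville theorem for a linear uniformly elliptic equation. First, I would set $u_k(x) := u(kx)/k^2$ and use boundedness of $D^2 u$ to see that, after subtracting affine functions, $\{u_k\}$ is locally uniformly bounded in $C^{1,1}$; passing to a subsequence yields a $C^{1,\alpha}_{\mathrm{loc}}$ limit $u_\infty$ of at most quadratic growth. Each $u_k$ solves $F(D^2 u_k) = f(k\,\cdot)$, and an Evans-type perturbed test function argument in which Theorem \ref{th11} furnishes the correctors shows that $u_\infty$ is a viscosity solution of $\bar F(D^2 u_\infty) = \fint_{[0,1]^n} f\,dx$ in $\mathbb{R}^n$ with $D^2 u_\infty \in \widetilde{\Gamma}$ in the weak sense. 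Hypothesis (H3) then forces $u_\infty$ to be a quadratic polynomial, and because blow-down annihilates linear and constant parts, $u_\infty(x) = \tfrac12 x^T A x$ for some $A \in \widetilde{\Gamma}$ automatically satisfying $\bar F(A) = \fint_{[0,1]^n} f\,dx$.

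Next, I would apply Theorem \ref{th11} to the pair $(A,\, f - \fint_{[0,1]^n} f\,dx)$ to obtain a unique corrector $v \in \mathbb{T} \cap C^2(\mathbb{R}^n)$ with $A + D^2 v \in \Gamma$. Since $\fint_{[0,1]^n} F(A + D^2 v)\,dx = \bar F(A) = \fint_{[0,1]^n} f\,dx$, this corrector in fact satisfies $F(A + D^2 v) = f$ on all of $\mathbb{R}^n$. Setting $w := u - \tfrac12 x^T A x - v$ and linearizing the identity $F(D^2 u) = F(A + D^2 v)$ via the fundamental theorem of calculus yields
\[
a_{ij}(x)\, D_{ij} w(x) = 0, \qquad a_{ij}(x) := \int_0^1 F_{ij}\bigl(t D^2 u(x) + (1-t)(A + D^2 v(x))\bigr)\,dt.
\]
By (H1) applied to $v$ together with boundedness of $D^2 u$, both $D^2 u$ and $A + D^2 v$ take values in a compact subset $K \subset \Gamma$, so (H2) gives uniform ellipticity of $(a_{ij})$; interior $C^{2,\alpha}$ regularity (Evans--Krylov, with $F \in C^2$ and $f \in C^\alpha$) then places $D^2 u$ and $D^2 v$ in $C^\alpha(\mathbb{R}^n)$ with bounded global seminorm, so $a_{ij} \in C^\alpha(\mathbb{R}^n)$ uniformly.

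To conclude, I would show $w$ is affine. From the blow-down, $w_k(x) = u_k(x) - \tfrac12 x^T A x - v(kx)/k^2 \to 0$ locally uniformly (using that $v$ is bounded), so $w = o(|x|^2)$ as $|x| \to \infty$. A Liouville statement for the uniformly elliptic, globally $C^\alpha$ equation $a_{ij} D_{ij} w = 0$ on $\mathbb{R}^n$ with $o(|x|^2)$ growth then forces $D^2 w \equiv 0$, so $w(x) = b \cdot x + c$ and $u(x) = \tfrac12 x^T A x + b \cdot x + c + v(x)$; uniqueness of $v$ given $A$ is inherited from Theorem \ref{th11}.

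The principal obstacle is this final Liouville step. Because $a_{ij}$ is neither periodic nor convergent at infinity, a naive rescaling of Schauder on $B_R$ produces a constant that grows like $R^\alpha$ from the scaled seminorm $R^\alpha [a]_{C^\alpha(B_R)}$. The way through is a Campanato-type iteration: at each base point $x_0$ and on each ball $B_{r_j}(x_0)$ with $r_j \downarrow 0$, one produces an approximating second-order polynomial $P_{x_0}^{(j)}$ using the local (not global) $C^\alpha$ norm of $a_{ij}$, and the $o(R^2)$ growth of $w$ controls the starting error on large balls so that the increments $P_{x_0}^{(j+1)} - P_{x_0}^{(j)}$ are summable; in the limit $j \to \infty$ the quadratic part of $P_{x_0}^{(j)}$ converges and must vanish, giving $D^2 w(x_0) = 0$ for every $x_0$.
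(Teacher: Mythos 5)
Your first half (blow-down with the perturbed test function method, (H3) to identify $A$ with $\bar F(A)=\fint_{[0,1]^n}f\,dx$, the corrector $v$ from Theorem \ref{th11} satisfying $F(A+D^2v)=f$, and linearization with coefficients made uniformly elliptic through (H1)--(H2) and the boundedness of $D^2u$) is in the spirit of what the paper intends. The genuine gap is the concluding Liouville step. The statement you rely on --- $a_{ij}D_{ij}w=0$ in $\mathbb{R}^n$ with $(a_{ij})$ uniformly elliptic and uniformly \emph{locally} $C^\alpha$, plus $w=o(|x|^2)$, implies $w$ affine --- is false without further structure on $a_{ij}$ at infinity. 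For instance, in $\mathbb{R}^2$ take $a(x)=\gamma I+(1-\gamma)\frac{x\otimes x}{|x|^2}$ with $\gamma=\frac{3}{10}$; then $w(x)=|x|^{3/2}\cos(2\theta)$ solves $a_{ij}D_{ij}w=0$ for $|x|\ge 1$, the coefficients are uniformly elliptic and uniformly Lipschitz on every unit ball there, and $w=o(|x|^2)$ is superlinear and nowhere close to affine. This also explains why your Campanato-type repair cannot work: the iteration must start from large balls (that is where the $o(R^2)$ information lives), and on a ball of radius $R\gg1$ the oscillation of $a_{ij}$ is $O(1)$, not $O(R^{-\alpha})$, so the freezing error is not summable; local regularity of the coefficients alone can never rule out such homogeneous-type solutions of degree strictly between $1$ and $2$. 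A secondary gap feeds into this: you obtain $u_{k}\to u_\infty$ only along a subsequence of scales, so $w=o(|x|^2)$ as $|x|\to\infty$ (for all scales and directions) would already require uniqueness of the blow-down limit, which you have not proved.

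The paper avoids both problems by running the Caffarelli--Li scheme rather than a growth-based Liouville theorem for the linearized equation. Concavity of $F$ and periodicity of $f$ give $F_{ij}(D^2u)\,D_{ij}\bigl(u(x+e)+u(x-e)-2u(x)\bigr)\ge 0$ for every integer vector $e$ (Lemma \ref{le3}); this is a one-sided inequality valid at \emph{all} scales, and combined with the weak Harnack inequality and the (subsequential) blow-down it yields $\sup_{x}\Delta_e^2u(x)=e^TAe/\|e\|^2$ (Lemma \ref{le6}), i.e.\ $\Delta_e^2\bigl(u-\tfrac12x^TAx\bigr)\le 0$ everywhere. The dyadic argument of Lemma \ref{le7} (Caffarelli--Li) then shows $h:=u-\tfrac12x^TAx-b\cdot x-v$ is bounded above, and since $h\in S(\lambda_K,1/\lambda_K,0)$ (here is where (H2) and the boundedness of $D^2u$ enter), the Harnack inequality forces $h$ to be constant. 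If you want to salvage your route, you must replace the final step by this second-difference/one-sided-bound mechanism (or otherwise exploit the almost-periodic structure of $a_{ij}=\int_0^1F_{ij}(tD^2u+(1-t)(A+D^2v))\,dt$), not by a purely local Schauder/Campanato iteration. A minor further point, shared with the paper's terse write-up: boundedness of $D^2u$ with $D^2u\in\widetilde\Gamma$ does not by itself keep $D^2u$ in a compact subset of $\Gamma$; in the applications this comes from the equation (e.g.\ $f\ge\inf f>0$ for $\sigma_k$), and it deserves an explicit word.
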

\bigskip
\noindent
\textbf{1.4. Applications to  $k$-Hessian equations.}
We consider $k$-Hessian equations as an interesting example of Theorem \ref{th11}. Following \cite{CNS85} and \cite{Iv83}, we recall that the cone $\Gamma_k$ is given by
$$\Gamma_k=\{\kappa\in \mathbb{R}^n: \sigma_j(\kappa)>0, \ \ j=1,2,\ldots, k\},$$
where $$\sigma_k(\kappa)=\sum_{i_1<\cdots< i_k}\kappa_{i_1}\cdots \kappa_{i_k}$$
is the $k$-th elementary symmetric polynomial.  Let $\kappa(A)=(\kappa_1,\ldots,\kappa_n)$ be the eigenvalues of a matrix $A$, then $\sigma_k(A)=\sigma_k(\kappa(A))$.

We  prove the following:
\bigskip
\begin{thm}\label{th6}
Let $f\in C^{2}(\mathbb{R}^n)$  be  periodic.
For any positive definite matrix $A$ satisfying  $f-\fint_{[0,1]^n}f(x)\,dx+\sigma_k(A)>0$ in $\mathbb{R}^n$,
\begin{equation}\label{eq33}
  \sigma_k(A+D^2v)=f-\fint_{[0,1]^n}f(x)\,dx+\sigma_k(A)
\end{equation}
has a  unique  solution $v\in C^{2}(\mathbb{R}^n)\cap \mathbb{T}$ with $\kappa(A+D^2v)\in \Gamma_k$.
\end{thm}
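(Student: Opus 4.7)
The plan is to reduce Theorem \ref{th6} to Theorem \ref{th11} applied to $F := \sigma_k^{1/k}$ on the open convex cone
\[
\Gamma := \{ M \in \mathcal{S}^{n\times n} : \kappa(M) \in \Gamma_k \},
\]
with $\widetilde \Gamma$ chosen to be any open convex neighborhood of $A$ whose closure lies in $\Gamma$. By the classical G{\aa}rding theory of hyperbolic polynomials, $\sigma_k^{1/k}$ is $C^\infty$ and concave on $\Gamma$, and on every compact subset $K$ of $\Gamma$ the derivatives $\partial \sigma_k^{1/k}/\partial M_{ij}$ form a uniformly positive definite matrix with bounds depending only on $K$, which delivers hypothesis (H2).

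To put \eqref{eq33} in the form required by Theorem \ref{th11}, I set $g := f - \fint_{[0,1]^n} f\,dx$ and, using the positivity hypothesis $g + \sigma_k(A) > 0$, define
\[
h := (g + \sigma_k(A))^{1/k} \in C^{2}(\mathbb{R}^n), \qquad \bar h := \fint_{[0,1]^n} h\,dx, \qquad \widetilde h := h - \bar h \in \mathbb{T}\cap C^{2}(\mathbb{R}^n).
\]
A periodic $v$ with $\kappa(A+D^2v) \in \Gamma_k$ solves \eqref{eq33} if and only if it solves
\[
\sigma_k^{1/k}(A+D^2v) - \fint_{[0,1]^n} \sigma_k^{1/k}(A+D^2v)\,dx = \widetilde h,
\]
since averaging the latter forces $\fint \sigma_k^{1/k}(A+D^2v)\,dx = \bar h$, after which raising to the $k$-th power recovers \eqref{eq33}. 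Hence Theorem \ref{th6} follows from Theorem \ref{th11} applied to $F$, $A$, $\widetilde h$, once (H1) is verified.

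The main obstacle is (H1): along any family $v_t \in C^2 \cap \mathbb{T}$ with $\kappa(A+D^2v_t) \in \Gamma_k$ solving
\[
\sigma_k^{1/k}(A+D^2v_t) - \fint_{[0,1]^n} \sigma_k^{1/k}(A+D^2v_t)\,dx = t\widetilde h, \qquad t \in [0,1],
\]
I must keep $A + D^2 v_t$ in a fixed compact subset of $\Gamma$ uniformly in $t$. Averaging gives $\sigma_k^{1/k}(A+D^2v_t) = c_t + t\widetilde h$ with $c_t := \fint \sigma_k^{1/k}(A+D^2v_t)\,dx > 0$, and admissibility forces $c_t + t\widetilde h > 0$ pointwise. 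The compactness in (H1) splits into (a) a uniform upper bound $\|D^2 v_t\|_\infty \le C$ and (b) a uniform positive lower bound $c_t + t\widetilde h \ge c_* > 0$, since together they confine the eigenvalues of $A+D^2v_t$ in a compact subset of $\Gamma_k$ through the relation $\sigma_k(A + D^2 v_t) = (c_t + t\widetilde h)^k$.

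For (b) I use a connectedness argument: at $t=0$ the unique $\mathbb{T}$-solution is $v_0 \equiv 0$ with $c_0 = \sigma_k^{1/k}(A) > 0$, at $t = 1$ one has $c_1 + \widetilde h = h \ge (\min(g + \sigma_k(A)))^{1/k} > 0$, and since $t \mapsto (v_t, c_t)$ is continuous in $C^2$ (through the implicit function theorem used for openness in the continuity method), the strict pointwise positivity propagates to a uniform positive lower bound on $[0,1]$. For (a), periodicity reduces the global estimate to an estimate on a bounded cube, where the classical Pogorelov–Chou–Wang $C^2$ estimate for $k$-admissible solutions applies, exploiting the concavity of $\sigma_k^{1/k}$, $f \in C^2$, the lower bound from (b), and $C^0,C^1$ control of $v_t$ (inherited from the $C^2$ bound together with zero mean via Poincar\'e on $[0,1]^n$). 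I expect the principal technical work to be the careful transfer of Chou–Wang-style interior estimates into the periodic setting; once (H1) is in place, Theorem \ref{th11} yields the unique $v \in C^2(\mathbb{R}^n)\cap \mathbb{T}$ with $\kappa(A+D^2v) \in \Gamma_k$ solving \eqref{eq33}.
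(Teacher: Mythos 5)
Your plan to reduce to Theorem \ref{th11} with $F = \sigma_k^{1/k}$ is natural (and indeed $\sigma_k^{1/k}$ is the concave operator, which the paper arguably should have used), but there are two genuine gaps, both traceable to the fact that you have not invoked the divergence structure of $\sigma_k$, i.e.\ Lemma \ref{le9} of the paper: for periodic $v$ with $\kappa(A+D^2v)\in\Gamma_k$ one has $\fint_{Q_1}\sigma_k(A+D^2v)\,dx=\sigma_k(A)$, which follows from the Reilly identity $D_i(\sigma_k^{ij}(A+tD^2v))=0$.

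First, your reduction step is not justified as written. You claim that ``averaging the latter forces $\fint \sigma_k^{1/k}(A+D^2v)\,dx=\bar h$'', but averaging the equation $\sigma_k^{1/k}(A+D^2v)-\fint\sigma_k^{1/k}(A+D^2v)\,dx=\widetilde h$ gives the tautology $0=0$ and pins down nothing. What actually determines the constant is Lemma \ref{le9}: writing $c:=\fint\sigma_k^{1/k}(A+D^2v)\,dx$ one has $\sigma_k(A+D^2v)=(c+\widetilde h)^k$, and Lemma \ref{le9} forces $\fint(c+\widetilde h)^k\,dx=\sigma_k(A)=\fint h^k\,dx=\fint(\bar h+\widetilde h)^k\,dx$; strict monotonicity of $c\mapsto\fint(c+\widetilde h)^k$ then yields $c=\bar h$. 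Without this ingredient the equivalence with \eqref{eq33} is not established.

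Second, and more seriously, your verification of (H1) via ``connectedness'' is circular. Hypothesis (H1) is an a priori estimate: it must hold for \emph{every} admissible $v_t$ solving the deformed equation at any $t\in[0,1]$, and it is precisely this a priori estimate that makes the continuity set $\mathcal T$ closed. You cannot assume that $t\mapsto(v_t,c_t)$ is a continuous path on all of $[0,1]$ before you have established closedness. Moreover, even granting continuity, strict pointwise positivity at the two endpoints $t=0,1$ together with local propagation does not yield a \emph{uniform} lower bound $c_t+t\widetilde h\ge c_*>0$: nothing in your argument prevents $c_t+t\widetilde h$ from approaching $0$ somewhere on the interior of $[0,1]$. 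The paper sidesteps this entirely by working with $F=\sigma_k$ itself; then Lemma \ref{le9} shows the deformed equation is the \emph{explicit} family
\[
\sigma_k(A+D^2v_t)=t\Big(f-\fint_{Q_1}f\,dx\Big)+\sigma_k(A),
\]
and the right-hand side is a convex combination $t\big(f-\fint f+\sigma_k(A)\big)+(1-t)\sigma_k(A)$, hence bounded below by $\min\{\,f-\fint f+\sigma_k(A),\,\sigma_k(A)\,\}\ge\inf_{\mathbb R^n}\big(f-\fint f+\sigma_k(A)\big)>0$, uniformly in $t$ and with no continuity argument. With this lower bound in hand, the paper's Propositions \ref{pro1} and \ref{pro2} give the $C^2$ and $C^0$ bounds needed for (H1). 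So the ingredient you are missing is the Reilly/divergence identity (Lemma \ref{le9}); once it is used to rewrite the continuity family in explicit form, both of your difficulties disappear.
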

\bigskip
\begin{cor}
Let $f\in C^{2}(\mathbb{R}^n)$  be  periodic and positive. Assume that $A\in \mathcal{S}^{n\times n}$ is positive definite and satisfies $\sigma_k(A)=\fint_{[0,1]^n}f(x)\,dx$. Then for any $b\in \mathbb{R}^n$  and $c\in \mathbb{R}$,
$$\sigma_k(D^2u)=f$$
has a solution $u\in C^{2}(\mathbb{R}^n)$ with $D^2u \in \Gamma_k$ and  $u(x)=\frac{1}{2}x^{T}Ax+b\cdot x+c+v(x)$ for some  $v\in \mathbb{T}$ uniquely determined by $A$.
\end{cor}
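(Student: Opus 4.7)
The plan is to obtain this corollary as a direct specialization of Theorem \ref{th6}. First I would observe that the normalization $\sigma_k(A)=\fint_{[0,1]^n}f(x)\,dx$ makes the right-hand side of (\ref{eq33}) collapse to $f$ itself, while simultaneously the positivity condition $f-\fint_{[0,1]^n}f\,dx+\sigma_k(A)>0$ required by Theorem \ref{th6} reduces to the pointwise positivity of $f$, which is part of the hypothesis.

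Next I would invoke Theorem \ref{th6} to produce the unique $v\in C^{2}(\mathbb{R}^n)\cap \mathbb{T}$ with $\kappa(A+D^2v)\in \Gamma_k$ satisfying $\sigma_k(A+D^2v)=f$ in $\mathbb{R}^n$. For the prescribed $b\in\mathbb{R}^n$ and $c\in\mathbb{R}$, I would set
\[
u(x)=\tfrac{1}{2}x^{T}Ax+b\cdot x+c+v(x),
\]
so that $D^2u=A+D^2v$. This immediately gives $\kappa(D^2u)\in \Gamma_k$ and $\sigma_k(D^2u)=f$, and the regularity $u\in C^{2}(\mathbb{R}^n)$ follows from that of $v$ together with the smoothness of the quadratic polynomial. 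The uniqueness of $v$ as a function of $A$ is exactly the uniqueness clause of Theorem \ref{th6}.

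There is essentially no obstacle: the entire analytic work, namely constructing the periodic corrector $v$ inside the cone $\Gamma_k$ via the continuity method under hypotheses (H1) and (H2) applied to $F=\sigma_k$, has already been performed in Theorem \ref{th6}. The corollary simply repackages that result in the standard ansatz $u(x)=\tfrac12 x^{T}Ax+b\cdot x+c+v(x)$ with $\bar F(A)$ (here equal to $\sigma_k(A)$) matching the mean of $f$, in parallel with the uniformly elliptic existence result (Theorem \ref{th1}) and the degenerate Liouville theorem (Theorem \ref{th8}).
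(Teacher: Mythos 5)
Your proof is correct and is exactly the intended argument: the hypothesis $\sigma_k(A)=\fint_{[0,1]^n}f\,dx$ makes the right-hand side of \eqref{eq33} reduce to $f$ and the positivity condition of Theorem~\ref{th6} reduce to $f>0$, so Theorem~\ref{th6} supplies the unique periodic corrector $v$ and the quadratic-plus-periodic ansatz gives $u$. The paper states the corollary without proof because it is this immediate specialization.
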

\bigskip
\begin{rmk}\label{rm2}
It is worth pointing out that  Li  \cite{LL90} first obtained the existence of periodic solutions $v$  of $\sigma_k(A+D^2v)=f(x,v)$ with positive definite matrix $A$ in $\mathbb{R}^n$ under the condition that $f(x,v)$ is periodic in $x$ and satisfies $\max_{x\in \mathbb{R}^n}f(x,a)\leq \sigma_k(A) \leq \min_{x\in \mathbb{R}^n}f(x,b)$ for two constants $a<b$. In particular, when $k=n$, he obtained the existence  of  entire convex quadratic  growth solutions $u$ of $\det (D^2u(x))=f(x)$ with $f$ being periodic.  As a result, Theorem \ref{th6} is a generalization of  Monge-Amp\`{e}re equations case. From Theorem \ref{th6}, it is clear that for the $k$-Hessian operator, the homogenization operator $\bar{F}$ is the $k$-Hessian operator itself.  Theorem \ref{th8} can immediately be applied to Monge-Amp\`{e}re equations by setting $\Gamma=\widetilde{\Gamma}=\{A\in \mathcal{S}^{n\times n}: \kappa(A)\in \Gamma_n\}$ and using the boundness of $D^2u$ obtained by Caffarelli and Li \cite{CL04}.
\end{rmk}
\bigskip
As another interesting application of Theorem \ref{th8}, we give a  Liouville type result for  $\sigma_2$-equations.
\bigskip
\begin{thm}\label{th7}
Let $f\in C^2(\mathbb{R}^n)$ be periodic and positive. Assume $u\in C^2(\mathbb{R}^n)$ with $D^2u>0$ is a  solution of
$$\sigma_2(D^2u(x))=f(x),\ \ \ x\in \mathbb{R}^n. $$
If there exist some  $C>0$ and $p>2n$ such that
\begin{equation}\label{eq44}
  \int_{B_r}|D^2u|^{p}\,dx \leq Cr^n,\ \ \ r>1,
\end{equation}
then there exist some $A\in \mathcal{S}^{n\times n}$ with $\sigma_2(A)=\fint_{[0,1]^n}f(x)\,dx$, $b\in \mathbb{R}^n$, $c\in \mathbb{R}$ and $v\in \mathbb{T}$ such that $$u(x)=\frac{1}{2}x^{T}Ax+b\cdot x+c+v(x).$$
\end{thm}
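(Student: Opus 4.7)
The plan is to deduce Theorem~\ref{th7} from Theorem~\ref{th8} applied to $F=\sigma_2$ with $\Gamma=\{A\in\mathcal{S}^{n\times n}:\kappa(A)\in\Gamma_2\}$ and $\widetilde{\Gamma}=\{A\in\mathcal{S}^{n\times n}:A>0\}$. By Remark~\ref{rm2}, the homogenized operator $\bar{F}$ for $\sigma_2$ is $\sigma_2$ itself, so the condition $\sigma_2(A)=\fint_{[0,1]^n}f$ in the conclusion matches precisely what Theorem~\ref{th8} produces. Two things must be verified: the structural hypotheses (H1)--(H3) for $\sigma_2$, and the global boundedness of $D^2u$ on $\mathbb{R}^n$, since (\ref{eq44}) is only an integral control and Theorem~\ref{th8} requires a pointwise bound.

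For the structural hypotheses, (H2) is immediate from the fact that $\partial_{ij}\sigma_2(M)=\sigma_1(M)\delta_{ij}-m_{ij}$ is the first Newton tensor of $M$, which is positive definite with spectrum uniformly controlled above and below on compact subsets of the positive definite cone. (H3) is a classical rigidity statement: every entire positive definite quadratic growth solution of $\sigma_2(D^2u)=c$ on $\mathbb{R}^n$ is a quadratic polynomial, which follows from the Liouville theorem for $\sigma_2$ equations (Bao-Chen-Guan-Ji and its successors). (H1), the compactness of $\{A+D^2v_t\}_{0\leq t\leq 1}$ in $\Gamma$ along the homotopy $\sigma_2(A+D^2v_t)-\fint \sigma_2(A+D^2v_t)=tf$, follows from interior $C^{2,\alpha}$ estimates in the positive cone combined with Evans--Krylov, in the same spirit as the estimates used to prove Theorem~\ref{th6}.

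The analytic heart of the argument is to upgrade (\ref{eq44}) to $D^2u\in L^\infty(\mathbb{R}^n)$. For each $R\geq 1$, rescale by $u_R(y)=R^{-2}u(Ry)$; then $D^2u_R(y)=D^2u(Ry)$ is still positive definite, $u_R$ solves $\sigma_2(D^2u_R)=f(R\,\cdot)$ on $B_2$, and the right-hand side stays in $[\min f,\max f]$ uniformly in $R$. A change of variables turns (\ref{eq44}) into $\|D^2u_R\|_{L^p(B_2)}\leq C$ independently of $R$. Since $p>2n$, an interior $L^p$-to-$L^\infty$ Hessian estimate for convex solutions of $\sigma_2=$ bounded, of Shankar--Yuan type, then yields $\|D^2u_R\|_{L^\infty(B_1)}\leq C$. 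Unscaling gives $\|D^2u\|_{L^\infty(B_{R/2})}\leq C$ for every $R\geq 1$, so $D^2u$ is globally bounded, and Theorem~\ref{th8} produces the desired decomposition $u(x)=\tfrac{1}{2}x^{T}Ax+b\cdot x+c+v(x)$ with $\sigma_2(A)=\fint_{[0,1]^n}f$.

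The main obstacle is precisely the interior Hessian estimate in the third paragraph: obtaining an $L^p$-to-$L^\infty$ bound on $D^2u$ for $\sigma_2$ equations valid at the threshold $p>2n$ is the nontrivial input. The verification of (H1)--(H3) and the abstract appeal to Theorem~\ref{th8} are routine by comparison.
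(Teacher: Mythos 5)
Your overall skeleton (verify (H1)--(H3) for $\sigma_2$, prove that $D^2u$ is bounded, then invoke Theorem \ref{th8}) is the same as the paper's, and the structural part is essentially fine (the paper verifies (H1)--(H2) already in the proof of Theorem \ref{th6}, identifies $\bar F=\sigma_2$ via Remark \ref{rm2} and Lemma \ref{le9}, and gets (H3) from the convex Liouville theorem of Chang--Yuan/Shankar--Yuan/Mooney rather than \cite{BCGJ}, which concerns Hessian quotients). The problem is the analytic heart, which you yourself flag but do not supply: the passage from the integral bound (\ref{eq44}) to $D^2u\in L^\infty(\mathbb{R}^n)$. You delegate it to an ``interior $L^p$-to-$L^\infty$ Hessian estimate of Shankar--Yuan type, valid for $p>2n$,'' for convex solutions of $\sigma_2=\,$bounded. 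No such off-the-shelf estimate is available, and your rescaling makes matters worse: after setting $u_R(y)=R^{-2}u(Ry)$ the right-hand side $f(R\,\cdot)$ is only uniformly bounded, while its $C^1$ and $C^2$ norms blow up like $R$ and $R^2$; the known interior $C^2$ estimates for convex solutions of $\sigma_2$ (Guan--Qiu \cite{GQ19}, which the paper does use) require control of $f$ in $C^2$ and a bound on the solution itself, neither of which survives your rescaling. So the step you call ``the nontrivial input'' is precisely the content of the theorem, and as stated your route has no proof of it.

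The paper closes this gap by exploiting the periodicity of $f$, which your blow-up discards. Writing the equation as $F(D^2u)=(\sigma_2(D^2u))^{1/2}=f^{1/2}$ and using concavity of $F$ together with $f(x\pm e)=f(x)$ for lattice vectors $e\in E$, one gets $F_{ij}(D^2u)D_{ij}\Delta_e^2u\ge 0$, i.e.\ the second-order incremental quotients along the lattice are subsolutions of the linearized equation. A local maximum principle is then run through the Alexandrov estimate, where the degenerate ellipticity enters only through the ratio $\Lambda_F^n/\det(F_{ij}(D^2u))\le C(\Delta u)^{2n}/f^{n}$; the hypothesis $p>2n$ in (\ref{eq44}) is exactly what makes the resulting integrals of $(\Delta u)^{2n(1+1/\beta)}$ grow like $r^n$, yielding $\sup_{e\in E}\sup_{\mathbb{R}^n}\Delta_e^2u\le C$. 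Finally, the tilted function $\tilde u(y)=u(x+y)-u(x)-Du(x)\cdot y$ is convex, nonnegative, and bounded on $B_2$ by the lattice second-difference bound, so the interior $C^2$ estimate of \cite[Theorem 3]{GQ19} (applied at unit scale, where $f$ is genuinely $C^2$) gives $|D^2u(x)|\le C$ uniformly. If you want to salvage your proposal, you must either prove the $L^p\to L^\infty$ Hessian estimate you invoke (with right-hand side controlled only in $L^\infty$, which is not known) or replace the rescaling step by an argument of this difference-quotient/ABP type that keeps the periodic structure in play.
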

\bigskip
\begin{rmk}
Convex solutions for  $\sigma_2$-equations are considered for the reason that there exists a non-polynomial entire 2-convex solution of $\sigma_2(D^2u)=1$, which was constructed by Warren \cite{WM16}. When $f=1$,  Chang and Yuan \cite{CY10} proved that any entire almost convex smooth solution $u$ of $\sigma_2(D^2u)=1$ must be a quadratic function. Recently, this result has be generalised to entire  convex viscosity solutions by Shankar and Yuan \cite{SY21}. Moreover, Mooney \cite{MC21} gave a  different proof. Finally,  it is worth pointing out that the appeared integral condition is motivated by \cite{BCGJ}.
\end{rmk}
\bigskip
This paper is organized as follows. In Section 2, we use the method of continuity to prove Theorem \ref{th3} and  obtain Theorem \ref{th4} by a smooth approximation. Furthermore, with the help of the uniform ellipticity  for $\bar{F}$, we  achieve the proof  of Theorem \ref{th1}. After determining the second order term via the blow-down for solutions, we  can reduce the proof of Theorem \ref{th2} to analogous analysis to the Monge-Amp\`{e}re equation \cite{CL04}. In Section 3, we use Theorems \ref{th1} and \ref{th2} to prove
Theorems \ref{th10}  and \ref{th9}. In Section 4, applying the preceding arguments, we generalise the results in Theorems \ref{th1} and \ref{th2} to  degenerate elliptic equations, that is Theorems \ref{th11} and \ref{th8}. Finally, in Section 5, considering applications of Theorems \ref{th11} and \ref{th8} to $k$-Hessian equations, we prove Theorems \ref{th6} and \ref{th7}.
\bigskip

We close this section by introducing some notations.

$\bullet$ For $r>0$, $Q_r=[-\frac{r}{2},\frac{r}{2}]^n$.

$\bullet$ For $x\in \mathbb{R}^n$ and $r>0$, $B_r(x)=\{y\in \mathbb{R}^n: |x-y|<r\}$ and $B_r=B_r(0)$.

$\bullet$ $e_1=(1,0,\ldots, 0),\ldots, e_n=(0,0,\ldots, 1)$.

$\bullet$   Set $$E=\{k_1 e_1+\cdots +k_n e_n: k_1,\ldots, k_n\ \ \mbox{are integers}\},$$
and the second order difference quotients
$$\Delta_{h}^2u(x)=\frac{u(x+h)+u(x-h)-2u(x)}{\|h\|^2}\ \ \mbox{for}\ \ h\in \mathbb{R}^n.$$

$\bullet$  The  H\"{o}lder seminorm
$$[u]_{\alpha; \Omega}=\sup_{x,y \in \Omega}\frac{|u(x)-u(y)|}{|x-y|^\alpha}.$$
\bigskip
\section{Uniformly elliptic equations in the whole space}
\noindent

In this section, we establish the existence and Liouville type results for quadratic  growth solutions of uniformly elliptic equations with the periodic data in the whole space. Throughout this section, we set $X=C^{2,\alpha}(\mathbb{R}^n) \cap \mathbb{T}$ and $Y=C^{ \alpha}(\mathbb{R}^n) \cap \mathbb{T}$ for some $0<\alpha<1$.
In addition, we  assume $F(0)=0$.
\bigskip
\subsection{Existence}
\noindent

Firstly, we will prove Theorem \ref{th3} by using the method of continuity. Secondly, we will complete the proof of Corollary \ref{th4} by a smooth approximation of $F$ and $f$ and the H\"{o}lder estimate.
\bigskip
\begin{proof}[Proof of Theorem \ref{th3}]
\emph{Uniqueness.} We let $v, w\in C^{2}(\mathbb{R}^n)\cap \mathbb{T}$ be  solutions of (\ref{eq2}). It follows that
either
$$F(A+D^2v)\leq F(A+D^2w)$$
or
$$F(A+D^2w)\leq F(A+D^2v).$$
Clearly, by the strong maximum principle and $v, w\in \mathbb{T}$, we obtain  $v=w$.

\emph{Existence.}
The proof mainly relies  on the method of continuity. Without loss of generality we  assume $A=0$, $\fint_{Q_1}f(x)\,dx=0$ and $f\in C^{\alpha}(\mathbb{R}^n)$ for some small enough $\alpha>0$.
Now we  consider the  map $ \mathcal{F} :   X \times [0,1]\longrightarrow Y$ defined by
 $$(v,t)\longmapsto \mathcal{F}(v,t)= F(D^2v)-\fint_{Q_1}F(D^2v)\,dx-tf,$$
and the set
$$\mathcal{T}:=\{t\in [0,1]: \mathcal{F}(v_t,t)=0 \ \ \mbox{for some}\ \ v_t\in X\}.$$
It is obvious that $0\in \mathcal{T}$ since $v_0=0$  is a unique solution. So to establish the existence of solutions $v\in X$  of (\ref{eq2}), i.e., $1\in \mathcal{T}$, it suffices to show that $\mathcal{T}$ is both open and closed in $[0,1]$.

\emph{Step 1}: \textbf{$\mathcal{T}$ is closed.} Let $v_t\in X$ denote a solution of $ \mathcal{F}(v_t,t)=0$.
 From the Evans-Krylov theory (see \cite[Theorem 8.1]{CC95}), we have
$$ \|v_t\|_{C^{2,\alpha} (Q_1)} \leq C \left(\|v_t\|_{L^{\infty}(Q_2)}+\|f\|_{C^{\alpha}(Q_2)}+\left|\fint_{Q_1}F(D^2v_t)\,dx\right|\right),$$
where  $C>0$  depends only on $n$, $\lambda$, $\Lambda$ and $\alpha$.
By the aid of the uniform ellipticity condition for $F$ and the interpolation inequality,
we arrive at the estimate
\begin{equation*}
  \|v_t\|_{C^{2,\alpha} (Q_1)}\leq C (\|v_t\|_{L^{\infty}(Q_1)}+\|f\|_{C^{\alpha}(Q_1)}).
\end{equation*}

Next we need to show that
there exists a constant $C>0$ depending only on $\lambda$, $\Lambda$ and  $n$ such that
\begin{equation}\label{eq4}
  \|v_t\|_{L^{\infty}(Q_1)}\leq C \|f\|_{L^{\infty}(Q_1)}.
\end{equation}
To see this, since $v_t$ has a local minimum at $z\in Q_1$ and a local maximum  at $y\in Q_1$, by the uniform ellipticity for $F$, we obtain
$$    -tf(z)\leq \fint_{Q_1}F(D^2v_t)\,dx \leq  -tf(y),$$
which implies that
$$\left|\fint_{Q_1}F(D^2v_t)\,dx\right|\leq \|tf\|_{L^{\infty}(Q_1)}\leq\|f\|_{L^{\infty}(Q_1)}.$$
Combining this with the  Harnack inequality, we have
$$\|v_t-\min_{Q_1}v_t\|_{L^{\infty}(Q_1)}\leq C\|f\|_{L^{\infty}(Q_1)},$$
where $C>0$ depends only on $\lambda$, $\Lambda$ and  $n$. Consequently, we establish the estimate (\ref{eq4}) since $v_t\in X$.
Finally,  we obtain a priori estimates
\begin{equation*}
  \|v_t\|_{C^{2,\alpha} (Q_1)}\leq C \|f\|_{C^{\alpha}(Q_1)}
\end{equation*}
for some constant $C>0$ depending only on $n$, $\lambda$, $\Lambda$ and $\alpha$, from which the closeness of $\mathcal{T}$ follows clearly.

 \emph{Step 2}: \textbf{$\mathcal{T}$ is open.} Openness follows from the implicit function theorem in Banach spaces. It is easy to check that $\mathcal{F}$ is of class $C^1$ and the Frech\`{e}t differential of $\mathcal{F}$ at $(v,t)$ with respect to $v$ 
 is given by
 $$h\mapsto D_v\mathcal{F}(v,t)[h]=F_{ij}(D^2v)D_{ij}h-\fint_{Q_1}F_{ij}(D^2v)D_{ij}h\,dx.$$
It remains to see that $D_v\mathcal{F}(v,t)$ is a linear isomorphism between $X$ and $Y$.  More precisely, we need to establish the existence and  uniqueness of solutions $h\in X$ of
\begin{equation}\label{eq3}
  Lh:=a_{ij}D_{ij}h-\fint_{Q_1}a_{ij}D_{ij}h\,dx=f
\end{equation}
for any $f\in Y$, where the  coefficients $a_{ij}$ are periodic and of class $C^{ \alpha} (0<\alpha<1)$ satisfying
\begin{equation}\label{eq23}
  \bar{\lambda} |\xi|^2\leq a_{ij}(x) \xi_i \xi_j\leq \bar{\Lambda} |\xi|^2
\end{equation}
for any $x, \xi \in \mathbb{R}^n$, where $0<\bar{\lambda}\leq \bar{\Lambda} < \infty$.

In order to apply  the method of continuity  to solve the problem (\ref{eq3}). We consider the family of equations
$$L_{t}h:=(1-t) L_0h+tLh=f,\ \ \ 0\leq t \leq 1,$$
where $L_0h=\Delta h-\fint_{[0,1]^n}\Delta h\,dx=\Delta h$.
We note that   the coefficients of $L_t$ satisfy (\ref{eq23}) with  $\bar{\lambda}$ and $\bar{\Lambda}$ replaced by $$\bar{\lambda}_t=\min(1,\bar{\lambda}) \ \ \mbox{and}\ \ \bar{\Lambda}_t=\max(1,\bar{\Lambda}).$$
 Let $h_t\in X$ denote a solution of this problem. By virtue of the Schauder estimate and the interpolation inequality, we have
\begin{equation*}
\|h_t\|_{C^{2,\alpha} (Q_1)} \leq C (\|h_t\|_{L^{\infty}(Q_1)}+\|f\|_{C^{\alpha}(Q_1)}),
\end{equation*}
where $C>0$ depends only on $\bar{\lambda}$, $\bar{\Lambda}$, $\alpha$, $n$ and $\|a_{ij}\|_{C^{\alpha}(Q_1)}$. Following the same arguments to derive the estimate (\ref{eq4}), we obtain
\begin{equation*}
  \|h_t\|_{L^{\infty}(Q_1)} \leq C \|f\|_{L^{\infty}(Q_1)}
\end{equation*}
for some constant $C>0$ depending only on $\bar{\lambda}$, $\bar{\Lambda}$ and  $n$.

It follows that
$$\|h_t\|_{C^{2,\alpha} (Q_1)} \leq C \|f\|_{C^{\alpha}(Q_1)},$$
that is,
$$\|h_t\|_{C^{2,\alpha} (Q_1)} \leq C \|L_th_t\|_{C^{\alpha}(Q_1)}.$$
Since $L_0=\Delta$ maps $X$ onto $Y$, the method of continuity is applicable. Consequently, the problem (\ref{eq3}) has a solution $h\in X$ for any $f\in Y$ and the uniqueness follows from the strong maximum principle.
 \end{proof}
\bigskip
By the aid of  Theorem \ref{th3}, we prove Corollary \ref{th4}.
\bigskip
\begin{proof}[Proof of Corollary \ref{th4}]
Without loss of generality we assume $A=0$ and $\fint_{[0,1]^n}f(x)\,dx=0$.
For $\epsilon >0$, let $F_\epsilon$ be the mollification of $F$ in $\mathcal{S}^{n\times n}$ and $f_\epsilon$ be the mollification of $f$ in $\mathbb{R}^n$. From  Theorem \ref{th3},
it follows that there is a unique solution $v_\epsilon\in Y$ such that
\begin{equation}\label{eq6}
  F_\epsilon(D^2v_\epsilon)-\fint_{Q_1}F_\epsilon(D^2v_\varepsilon)\,dx=f_\epsilon.
\end{equation}
Clearly, by $F(0)=0$, we note  that  $|F_\epsilon(0)|\leq 1$ for small enough $\epsilon>0$. Furthermore, since $D^2v_\epsilon(y)\leq0$ as $v_\epsilon(y)=\max_{Q_1}v_\epsilon$ and $D^2v_\epsilon(z)\leq0$ as $v_\epsilon(z)=\min_{Q_1}v_\epsilon$,
 we  know that
$$-1-\sup_{Q_1}f_\epsilon \leq   F_\epsilon(0)-f_\epsilon(z)\leq \fint_{Q_1}F_\epsilon(D^2v_\epsilon)\,dx \leq  F_\epsilon(0)-f_\epsilon(y)\leq 1
-\inf_{Q_1}f_\epsilon,$$
that is,
$$\left|\beta_\epsilon:=\fint_{Q_1}F_\epsilon(D^2v_\epsilon)\,dx\right|\leq 1+\|f_\epsilon\|_{L^{\infty}(Q_1)}\leq 1+\|f\|_{L^{\infty}(Q_1)}.$$
Combining this  with  the Harnack inequality and the interior H\"{o}lder estimate, we have
$$\|\bar{v}_\epsilon:=v_\epsilon-\min_{Q_1}v_\epsilon\|_{C^\alpha(Q_1)}\leq C(1+\|f\|_{L^{\infty}(Q_1)}),$$
where $0<\alpha <1$ and $C>0$ depends only on $\lambda$, $\Lambda$ and  $n$. Then we conclude that there exists a periodic function $\bar{v}$ and a real number $\beta$ such that, up to a subsequence,
$$\bar{v}_\epsilon \rightarrow \bar{v}\ \ \ \mbox{in}\ \ \ C(Q_1)\ \ \ \mbox{and}\ \ \ \ \beta_\epsilon \rightarrow \beta \ \ \mbox{as}\ \ \epsilon\rightarrow 0.$$
Hence, letting $\epsilon\rightarrow 0$ in (\ref{eq6}), we obtain
\begin{equation}\label{eq7}
  F(D^2v)-\beta=f \ \ \ \mbox{in}\ \ \ \mathbb{R}^n
\end{equation}
in the viscosity sense, where $v=\bar{v}-\fint_{Q_1}\bar{v}\,dx.$

It remains to show the uniqueness of $\beta$. Indeed, suppose that $w\in \mathbb{T}$ and $\tilde{\beta}$ satisfy (\ref{eq7}).
From the strong maximum principle for viscosity solutions and $v, w\in \mathbb{T}$, it follows that $v=w$. Consequently, $\beta=\tilde{\beta}$.
\end{proof}
\bigskip
 \begin{proof}[Proof of Theorem \ref{th1}]
\emph{ Necessity.} Conversely, suppose that $\bar{F}(A)\neq\fint_{Q_1}f(x)\,dx$. Using Corollary \ref{th4}, we let $w\in \mathbb{T}$ be a viscosity solution
of $ F(A+D^2w)=f-\fint_{Q_1}f(x)\,dx+\bar{F}(A)$.  Clearly, by the strong maximum principle  for viscosity solutions and $v, w\in \mathbb{T}$, we obtain  $v=w$, which yields  $\bar{F}(A)=\fint_{Q_1}f(x)\,dx$, a contradiction.

\emph{ Sufficiency.} It is clear from Corollary \ref{th4}.
 \end{proof}
 \bigskip
 \begin{rmk}\label{rej2}
 We can give the explicit homogenization operator for  the linear elliptic equation of non-divergence form. Let the periodic function $m$ be the unique solution of the problem $D_{ij}(a_{ij}(x)m(x))=0$ in $\mathbb{R}^n$  with $\fint_{Q_1}m(x)\,dx=1$ (see \cite[Theorem 2]{AL89} and \cite{BLP78}). Multiplying the function $m$ to both sides of the equation $a_{ij}(A_{ij}+D_{ij}v)-\fint_{Q_1}a_{ij}(x)(A_{ij}+D_{ij}v(x))\,dx=f-\fint_{Q_1}f(x)\,dx$ and then integrating over $Q_1$, we obtain $\bar{F}(A):=\fint_{Q_1}a_{ij}(x)(A_{ij}+D_{ij}v(x))\,dx=A_{ij}\fint_{Q_1}a_{ij}(x)m(x)\,dx-\fint_{Q_1}f(x)m(x)\,dx+\fint_{Q_1}f(x)\,dx$.
 Clearly, the existence result holds if and only if  $A_{ij}\fint_{Q_1}a_{ij}(x)m(x)\,dx=\fint_{Q_1}f(x)m(x)\,dx$.
 \end{rmk}

\bigskip
At the end of this section, we   collect some properties of the homogenisation operator $\bar{F}$, which come from \cite{Ev92}.
 \bigskip
 \begin{lem}\label{le8}
 (i) If the  operator $F$ is uniformly elliptic, so is the homogenization operator $\bar{F}$ with the same ellipticity constants as the operator $F$.

 (ii) If the operator $F$ is concave in $\mathcal{S}^{n\times n}$, so is the homogenization operator $\bar{F}$ in $\mathcal{S}^{n\times n}$.
 \end{lem}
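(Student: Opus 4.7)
The plan is to exploit the characterization of $\bar{F}$ from Corollary \ref{th4} and Remark \ref{rm1}: for each $A\in\mathcal{S}^{n\times n}$, $\bar{F}(A)$ is the unique real such that $F(A+D^2 v_A)=f-\fint_{Q_1}f+\bar{F}(A)$ admits a viscosity solution $v_A\in\mathbb{T}$. I would first argue under the smoothness assumptions of Theorem \ref{th3} (so that $v_A\in C^2(\mathbb{R}^n)\cap\mathbb{T}$ is a classical solution and all pointwise maximum principle arguments go through cleanly), then pass to the limit via the mollification scheme $F_\epsilon,f_\epsilon$ used in the proof of Corollary \ref{th4}. Standard mollification preserves both uniform ellipticity (with the same $\lambda,\Lambda$) and concavity, and the approximation argument in Corollary \ref{th4} gives $\bar{F}_\epsilon(A)\to\bar{F}(A)$ for each $A$, so any linear or convex-combination inequality among values of $\bar{F}_\epsilon$ transfers to $\bar{F}$.

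For part (i), fix $A\in\mathcal{S}^{n\times n}$ and $N\geq 0$, let $v_A,v_{A+N}$ be the smooth correctors, and form the periodic function $\phi=v_{A+N}-v_A\in\mathbb{T}$. At a maximum point $x_M\in Q_1$ of $\phi$ one has $D^2 v_{A+N}(x_M)\leq D^2 v_A(x_M)$, so by monotonicity and uniform ellipticity of $F$,
\begin{align*}
\bar{F}(A+N)-\bar{F}(A)&=F(A+N+D^2 v_{A+N}(x_M))-F(A+D^2 v_A(x_M))\\
&\leq F(A+N+D^2 v_A(x_M))-F(A+D^2 v_A(x_M))\leq \Lambda\|N\|.
\end{align*}
At a minimum point $x_m$ of $\phi$, the reverse inequality $D^2 v_{A+N}(x_m)\geq D^2 v_A(x_m)$ together with $F(A+N+M)-F(A+M)\geq\lambda\|N\|$ gives $\bar{F}(A+N)-\bar{F}(A)\geq\lambda\|N\|$. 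This proves the same ellipticity constants carry over.

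For part (ii), given $A,B\in\mathcal{S}^{n\times n}$ and $t\in[0,1]$, set $C=tA+(1-t)B$ and introduce the auxiliary periodic function $w=tv_A+(1-t)v_B\in\mathbb{T}$. Concavity of $F$ applied pointwise yields
\begin{equation*}
F(C+D^2 w)\geq tF(A+D^2 v_A)+(1-t)F(B+D^2 v_B)=f-\fint_{Q_1}f+t\bar{F}(A)+(1-t)\bar{F}(B).
\end{equation*}
Now compare $w$ with the actual corrector $v_C$ at a minimum point $x_0\in Q_1$ of the periodic function $v_C-w$. There $D^2 v_C(x_0)\geq D^2 w(x_0)$, and hence by ellipticity $F(C+D^2 v_C(x_0))\geq F(C+D^2 w(x_0))$. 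Since the left-hand side equals $f(x_0)-\fint f+\bar{F}(C)$, comparison with the displayed inequality at $x_0$ gives $\bar{F}(C)\geq t\bar{F}(A)+(1-t)\bar{F}(B)$, which is the desired concavity.

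The only delicate point is the lack of classical $C^2$ regularity of correctors for the general $F\in C(\mathcal{S}^{n\times n})$ considered in Corollary \ref{th4}; rather than invoking sup/inf convolutions or working with viscosity test functions, I sidestep the issue by restricting all pointwise manipulations to the smooth regime of Theorem \ref{th3} and then taking $\epsilon\to 0$ in the established inequalities $\lambda\|N\|\leq\bar{F}_\epsilon(A+N)-\bar{F}_\epsilon(A)\leq\Lambda\|N\|$ and $\bar{F}_\epsilon(tA+(1-t)B)\geq t\bar{F}_\epsilon(A)+(1-t)\bar{F}_\epsilon(B)$. This is the main piece of bookkeeping, but no new estimate is required beyond what is already contained in the proof of Corollary \ref{th4}.
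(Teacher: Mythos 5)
Your proof is correct, but it follows a genuinely different route from the paper's. The paper proves both parts by contradiction at the level of the continuous $F$: assuming, say, $\bar{F}(M+N)-\bar{F}(M)<\lambda\|N\|$, it shows that the two viscosity correctors satisfy a strict viscosity inequality $F(M+D^2v^{M+N})<F(M+D^2v^{M})$, and then invokes the strong maximum principle for viscosity solutions to force $v^{M}-v^{M+N}$ to be constant, which contradicts the strict inequality; for concavity it mollifies the correctors $v^{M},v^{N}$ (not $F$) so that concavity of $F$ can be applied pointwise, passes to the limit in the viscosity sense, and again concludes by the strong maximum principle. You instead stay in the smooth regime of Theorem \ref{th3}, where the correctors are classical, subtract the two corrector equations at an extremum point of $v_{A+N}-v_A$ (respectively $v_C-tv_A-(1-t)v_B$) so that the $f$-terms cancel, and read off the inequalities $\lambda\|N\|\leq\bar F(A+N)-\bar F(A)\leq\Lambda\|N\|$ and $\bar F(tA+(1-t)B)\geq t\bar F(A)+(1-t)\bar F(B)$ directly, then transfer them to general continuous $F$ through the mollification scheme $F_\epsilon,f_\epsilon$ of Corollary \ref{th4}, using that mollification preserves the ellipticity constants and concavity and that $\bar F_\epsilon(A)\to\bar F(A)$ (full-family convergence follows from the uniqueness of $\beta$, a point worth stating explicitly). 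Your approach buys a direct, quantitative argument free of viscosity-solution technicalities and of the contradiction structure, at the cost of an extra approximation layer; the paper's approach avoids approximating $F$ altogether and works with the viscosity correctors as they are, but leans on the strong maximum principle for viscosity solutions and only yields the inequalities indirectly. Both arguments implicitly use the concavity (or convexity) of $F$ even in part (i), since the correctors defining $\bar F$ come from Theorem \ref{th3} or Corollary \ref{th4}.
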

 \begin{proof}
 (i) To obtain a contradiction, suppose that there exist some $M, N\in \mathcal{S}^{n\times n}$ and $N\geq 0$ such that
 \begin{equation}\label{eq9}
\bar{F}(M+N)-\bar{F}(M) <\lambda \|N\|.
\end{equation}
Let $v^{M}$, $v^{M+N}\in \mathbb{T}$ be  viscosity solutions of
 \begin{equation}\label{eq11}
   \left\{
\begin{aligned}
   &F(D^2v^{M}(x)+M)=f(x)-\fint_{Q_1}f(x)\,dx+\bar{F}(M),\\
&F(D^2v^{M+N}(x)+M+N)=f(x)-\fint_{Q_1}f(x)\,dx+\bar{F}(M+N).
\end{aligned}
\right.
 \end{equation}
We now claim that
\begin{equation}\label{eq10}
  F(D^2v^{M+N}(x)+M) <F(D^2v^{M}(x)+M)
\end{equation}
in the viscosity sense.
To see this, let $\phi\in C^2(\mathbb{R}^n)$ and $v^{M+N}-\phi$ has a local minimum at a point $x_0\in \mathbb{R}^n$. In view of (\ref{eq9}), (\ref{eq11}) and the uniform ellipticity of $F$, we have
\begin{equation*}
  \begin{split}
    F(D^2\phi(x_0)+M) & \leq F(D^2\phi(x_0)+M+N)-\lambda \|N\| \\
      & \leq f(x_0) -\fint_{Q_1}f(x)\,dx+\bar{F}(M+N)-\lambda \|N\|\\
      &<f(x_0)-\fint_{Q_1}f(x)\,dx+\bar{F}(M)\\
      &=F(D^2v^{M}(x_0)+M),
  \end{split}
\end{equation*}
which establishes (\ref{eq10}).
Owing to (\ref{eq10}) and the  strong maximum principle for viscosity solutions, we discover
$$v^{M}-v^{M+N}=c$$
for some constant $c$, a contradiction to (\ref{eq10}). It follows that
$$\lambda \|N\| \leq \bar{F}(M+N)-\bar{F}(M)$$
for any $M, N\in \mathcal{S}^{n\times n}$ and $N\geq 0$.
The same argument works for
$$ \bar{F}(M+N)-\bar{F}(M)\leq \Lambda \|N\|.$$

(ii) For later contradiction, let us suppose that there exist some $M, N\in \mathcal{S}^{n\times n}$ such that
$$\bar{F}\left(\frac{M+N}{2}\right)<\frac{1}{2}\bar{F}(M)+\frac{1}{2}\bar{F}(N).$$
 Let $v^{M}$, $v^{N}$, $v^{\frac{M+N}{2}}\in \mathbb{T}$ be  viscosity solutions of
 \begin{equation*}
   \left\{
\begin{aligned}
   &F(D^2v^{M}(x)+M)=f(x)-\fint_{Q_1}f(x)\,dx+\bar{F}(M),\\
&F(D^2v^{N}(x)+N)=f(x)-\fint_{Q_1}f(x)\,dx+\bar{F}(N),\\
&F\left(D^2v^{\frac{M+N}{2}}(x)+\frac{M+N}{2}\right)=f(x)-\fint_{Q_1}f(x)\,dx+\bar{F}\left(\frac{M+N}{2}\right).
\end{aligned}
\right.
 \end{equation*}
 For $\varepsilon>0$, let $v_{\varepsilon}^{M}$ be the mollification of $v^{M}$ in $\mathbb{R}^n$ and $v_{\varepsilon}^{N}$ be the mollification of $v^{N}$ in $\mathbb{R}^n$. Then we have,  in view of concavity of $F$,

 $$F\left(\frac{D^2v_{\varepsilon}^{M}(x)+D^2v_{\varepsilon}^{N}(x)}{2}+\frac{M+N}{2}\right) \geq \frac{1}{2}F\left(D^2v_{\varepsilon}^{M}(x)+M\right)+\frac{1}{2}F\left(D^2v_{\varepsilon}^{N}(x)+N\right).$$
Sending  $\varepsilon$ to zero, we obtain

\begin{equation*}
  \begin{split}
    F\left(\frac{D^2v^{M}(x)+D^2v^{N}(x)}{2}+\frac{M+N}{2}\right) & \geq \frac{1}{2}F\left(D^2v^{M}(x)+M\right)+\frac{1}{2}F\left(D^2v^{N}(x)+N\right)\\
      & =\frac{1}{2}\bar{F}(M)+\frac{1}{2}\bar{F}(N)+f(x)-\fint_{Q_1}f(x)\,dx\\
      &>\bar{F}\left(\frac{M+N}{2}\right)+f(x)-\fint_{Q_1}f(x)\,dx\\
      &=F\left(D^2v^{\frac{M+N}{2}}(x)+\frac{M+N}{2}\right)
  \end{split}
\end{equation*}
in   the viscosity sense, which together with the strong maximum principle easily yields
$$v^{\frac{M+N}{2}}-\frac{v^{M}+v^{N}}{2}=c$$
for some constant $c$. But we obtain a contradiction.
 \end{proof}
 \bigskip
\begin{rmk}
\emph{(i).}  From (i) in Lemma \ref{le8}, it follows that there exists some $t\in \mathbb{R}$ such that $\bar{F}(tI)=\fint_{Q_1}f(x)\,dx$. Hence we  can always find some $A\in \mathcal{S}^{n\times n}$ satisfying  $\bar{F}(A)=\fint_{Q_1}f(x)\,dx$.

\emph{(ii).}
In the property (ii), if $F$ is concave in some open convex  set $\Gamma \subset \mathcal{S}^{n\times n}$, so is $\bar{F}$ in $\Gamma$.
\end{rmk}
\bigskip
\subsection{ Liouville type result}
\noindent

This subsection will be devoted to the proof of Theorem \ref{th2}.  For the case $n\geq 3$, following the strategy  implemented  by Caffarelli and Li \cite{CL04}, we will divide the proof into a sequence of lemmas. For the case $n=2$, a simpler proof will be presented.

Throughout  this subsection,  $u$ is a viscosity solution of
$$F(D^2u)=f\ \ \ \mbox{in} \ \ \ \mathbb{R}^n$$
with quadratic growth (\ref{eq36}), where $F$ is concave and uniformly elliptic and $f\in C(\mathbb{R}^n)$ is periodic.

 For $R\geq 1$, let
 $$u_R(x)=\frac{u(R x)}{R^2},\ \ \ x\in \mathbb{R}^n.$$
  \bigskip
 \begin{lem}\label{le5}
There exists some  $A\in \mathcal{S}^{n\times n}$ such that a subsequence
$\{u_{R_i}\}_{i=1}^\infty$ converges uniformly in the $C^1$ norm  to $Q(x):=\frac{1}{2}x^TAx$ in any compact set of $\mathbb{R}^n$
with $$\bar{F}(A)=\fint_{Q_1}f(x)\,dx.$$
\end{lem}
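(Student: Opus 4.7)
The plan is a blow-down argument: extract a limit of $\{u_R\}$ as $R \to \infty$ along a subsequence, identify the equation it satisfies via periodic homogenization, and apply a Liouville theorem to force the limit to be a quadratic form.

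\textbf{Compactness.} The rescaled function $u_R$ is a viscosity solution of $F(D^2 u_R(x)) = f(Rx)$ with right-hand side uniformly bounded by $\|f\|_{L^\infty(\mathbb{R}^n)}$, and the hypothesis (\ref{eq36}) gives $|u_R(x)| \le C(R^{-2} + |x|^2) \le C(1 + |x|^2)$ for all $R \ge 1$. Caffarelli's interior $C^{1,\alpha}$ estimate for uniformly elliptic equations yields $\|u_R\|_{C^{1,\alpha}(B_r)} \le C(r)$ independent of $R$, and a diagonal Arzel\`a--Ascoli argument produces $R_i \to \infty$ with $u_{R_i} \to u_\infty$ in $C^1_{\mathrm{loc}}(\mathbb{R}^n)$. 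Since $u_{R_i}(0) = u(0)/R_i^2 \to 0$ and $\nabla u_{R_i}(0) = \nabla u(0)/R_i \to 0$, the limit satisfies $u_\infty(0) = 0$, $\nabla u_\infty(0) = 0$, and $|u_\infty(x)| \le C|x|^2$.

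\textbf{Homogenization.} The limit $u_\infty$ is a viscosity solution of the homogenized equation $\bar F(D^2 u_\infty) = \fint_{Q_1} f(x)\,dx$ in $\mathbb{R}^n$, which we verify by Evans' perturbed test function method. Given $\varphi \in C^2$ touching $u_\infty$ from above at $x_0$ (the subsolution case), set $M := D^2 \varphi(x_0)$, and let $v^M \in C^2(\mathbb{R}^n) \cap \mathbb{T}$ be the corrector furnished by Corollary \ref{th4} solving $F(M + D^2 v^M(y)) = \bar F(M) + f(y) - \fint_{Q_1} f$. The perturbation $\varphi_R(x) := \varphi(x) + R^{-2} v^M(Rx)$ converges uniformly to $\varphi$, so there is a local maximizer $x_R \to x_0$ of $u_R - \varphi_R$; the viscosity inequality $F(D^2\varphi(x_R) + D^2 v^M(Rx_R)) \ge f(Rx_R)$, combined with the corrector equation at $y = Rx_R$ and the uniform Lipschitz continuity of $F$ to handle $|D^2\varphi(x_R) - M| = o(1)$, yields $\bar F(M) \ge \fint_{Q_1} f$ upon sending $R = R_i \to \infty$. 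The supersolution case is identical.

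\textbf{Liouville and conclusion.} By Lemma \ref{le8}, $\bar F$ is concave and uniformly elliptic. Evans--Krylov interior $C^{2,\alpha}$ estimates applied on $B_R$ to $u_\infty$ (solving a concave uniformly elliptic equation with constant right-hand side) and rescaling using the quadratic growth of $u_\infty$ yield $\|D^2 u_\infty\|_{L^\infty(\mathbb{R}^n)} \le C$ and $[D^2 u_\infty]_{C^\alpha(B_R)} \le C R^{-\alpha}$ for all $R \ge 1$. Letting $R \to \infty$ forces $D^2 u_\infty \equiv A$ for some constant matrix $A$, so $u_\infty$ is a quadratic polynomial; combined with $u_\infty(0) = 0$ and $\nabla u_\infty(0) = 0$, we conclude $u_\infty(x) = \tfrac{1}{2}x^T A x = Q(x)$, and the homogenized equation then yields $\bar F(A) = \fint_{Q_1} f(x)\,dx$, as required. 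The main obstacle is the homogenization step, which requires careful use of the corrector from Corollary \ref{th4} and a standard but nontrivial perturbation argument to pass from the viscosity inequality for $u_R$ to that for $u_\infty$; the compactness and Liouville steps rest on classical regularity theory and the properties of $\bar F$ established in Lemma \ref{le8}.
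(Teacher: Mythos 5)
Your strategy coincides with the paper's: blow down $u_R$, extract a $C^1_{\mathrm{loc}}$ limit $u_\infty$, show via perturbed test functions that $u_\infty$ solves the homogenized equation $\bar F(D^2 u_\infty)=\fint_{Q_1} f$, and then invoke Evans--Krylov and Lemma~\ref{le8} to force $D^2 u_\infty$ to be constant. The compactness step and the Liouville step are both correct; in fact your observation that $u_{R_i}(0)\to 0$ and $\nabla u_{R_i}(0)\to 0$ makes explicit why the limit has no linear or constant part, a point the paper glosses over.

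There is, however, a genuine gap in the homogenization step as you have written it. The corrector $v^M$ supplied by Corollary~\ref{th4} is only a viscosity solution in $\mathbb{T}$ --- here $F$ and $f$ are merely continuous, so $v^M$ is not $C^2$. Consequently the perturbed test function $\varphi_R(x)=\varphi(x)+R^{-2}v^M(Rx)$ is not $C^2$ either, and the pointwise inequality
$F\bigl(D^2\varphi(x_R)+D^2 v^M(Rx_R)\bigr)\ge f(Rx_R)$
is not well-defined: you cannot evaluate $D^2 v^M$ at $Rx_R$, and you cannot test the viscosity subsolution property of $u_R$ against a non-$C^2$ function. Likewise the corrector equation cannot be read pointwise at $y=Rx_R$. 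The paper circumvents this precisely by avoiding any pointwise use of $D^2 v^M$: arguing by contradiction with $\delta:=\bar F(D^2\varphi(x_0))-\fint f<0$, it shows that $\varphi_{R_i}$ is a viscosity \emph{supersolution} of $F(D^2\cdot)-f(R_i\cdot)=\delta/2$ on a small ball $B_r(x_0)$ --- this is proved by taking an arbitrary smooth $\psi$ touching $\varphi_{R_i}$ from below at $x_1$, transferring the touching to $v^M$ at $y_1=R_ix_1$, and invoking the viscosity supersolution property of $v^M$ for the corrector equation --- and then applies the comparison principle between $u_{R_i}$ and $\varphi_{R_i}$ on $B_r(x_0)$, reaching a contradiction with the strict local maximum of $u_\infty-\varphi$ at $x_0$. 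Your closing remark that the homogenization step needs ``a standard but nontrivial perturbation argument'' signals awareness of the issue, but the argument as written relies on a pointwise inequality that is unavailable; replacing it with the comparison-principle argument above is necessary, not cosmetic.
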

\begin{proof}
\emph{Step 1: $u_{R_i} \rightarrow Q\  \ \mbox{in}\  \ C^{1}_{loc}(\mathbb{R}^n)$.}
 Applying the $C^{1,\alpha}$ estimate $(0<\alpha<1)$ (see \cite[Theorem 8.3]{CC95}) to
$$F(D^2u_R(x))=f(Rx),\ \ \ x\in B_r \ (r>1),$$
we obtain
\begin{equation*}
  \begin{split}
   \|u_R\|_{L^{\infty}(B_r)} +r\|Du_R\|_{L^{\infty}(B_r)}+r^{1+\alpha}[Du_R]_{\alpha; B_r}& \leq C (\|u_R\|_{L^{\infty}(B_{2r})}+4r^2\|f(Rx)\|_{L^{\infty}(B_{2r})})\\
      & \leq C (1+4r^2+4r^2\|f\|_{L^{\infty}(\mathbb{R}^n)})
  \end{split}
\end{equation*}
for some constant $C>0$, 
 which implies that we  extract a subsequence $\{u_{R_i}\}_{i=1}^{\infty}$  such that
$$u_{R_i} \rightarrow Q\ \ \ \mbox{in}\ \ \ C^{1}_{loc}(\mathbb{R}^n)$$
with $$|Q(x)|\leq C(1+|x|^2),\ \ \ x\in \mathbb{R}^n$$
for some constant $C>0$.

\emph{Step 2: $Q(x)=\frac{1}{2}x^{T}Ax.$} We claim that $Q$ is a viscosity
solution of
\begin{equation}\label{eq42}
 \bar{F}(D^2Q)=\fint_{Q_1}f(x)\,dx
\end{equation}
in $\mathbb{R}^n$.
We first prove that $Q$ is a  viscosity subsolution of (\ref{eq42}).
 Fix $\phi\in C^2(\mathbb{R}^n)$ and suppose $Q-\phi$ has a strict local maximum at $x_0$ with $Q(x_0)=\phi(x_0)$. We intend to prove
 $$\bar{F}(D^2\phi(x_0))-\fint_{Q_1}f(x)\,dx\geq 0.$$
 Suppose, to the contrary, that
 $$\delta:=\bar{F}(D^2\phi(x_0))-\fint_{Q_1}f(x)\,dx<0.$$
 Applying Corollary \ref{th4} to $A=D^2\phi(x_0)$, we let $v\in \mathbb{T}$ be a viscosity solution of
 \begin{equation}\label{eq37}
   F(D^2\phi(x_0)+D^2v)=f(x)+\bar{F}(D^2\phi(x_0))-\fint_{Q_1}f(x)\,dx.
 \end{equation}
 Introduce the perturbed test function
 $$\phi^{R_i}(x)=\phi(x)+\frac{1}{R_i^2}v\left(R_i x\right),\ \ \ x\in \mathbb{R}^n.$$

 We claim that
 $$F\left(D^2\phi^{R_i}(x)\right)-f\left(R_i x\right)\leq \frac{\delta}{2},\ \ \ x\in B_r(x_0)$$
 in the viscosity sense for some sufficiently small $r>0$. To see this, we fix $\psi\in C^{\infty}(\mathbb{R}^n)$ such that $\phi^{R_i}-\psi$ has a minimum at a point $x_1\in B_r(x_0)$ with
 $$\phi^{R_i}(x_1)=\psi(x_1).$$
 Then it is obvious that
$$\eta(y):=v(y)-R_i^2\left(\psi\left(\frac{1}{R_i} y\right)-\phi\left(\frac{1}{R_i} y\right)\right)$$
has a minimum at $y_1=R_ix_1$. Furthermore, observing that $v$ is a viscosity solution of (\ref{eq37}), we have
$$F(D^2\phi(x_0)+D^2\psi(x_1)-D^2\phi(x_1))-f\left(R_ix_1\right)\leq \delta,$$
which implies that
$$F(D^2\psi(x_1))-f\left(R_ix_1\right)\leq \frac{\delta}{2}$$
for small enough $r>0$. The claim is proved.

In view of
\begin{equation*}
   \left\{
\begin{aligned}
   &F(D^2\phi^{R_i}(x))-f\left(R_ix\right)\leq \frac{\delta}{2},\ \ \ x\in B_r(x_0),\\
&F(D^2u_{R_i}(x))-f\left(R_ix\right)=0, \ \ \ x\in B_r(x_0),
\end{aligned}
\right.
 \end{equation*}
 the comparison principle for viscosity solutions leads to
 $$(u_{R_i}-\phi^{R_i})(x_0)\leq \max_{\partial B_r(x_0)}(u_{R_i}-\phi^{R_i}).$$
 In addition, letting $R_i \rightarrow \infty$, we obtain
 $$(Q-\phi)(x_0)\leq \max_{\partial B_r(x_0)}(Q-\phi).$$
But, since $Q-\phi$ has a strict local maximum at $x_0$,  we obtain a contradiction.
In the same manner, we can show that $Q$ is a  viscosity supersolution of (\ref{eq42}).

From the Evans-Krylov theorem and the properties in Lemma \ref{le8}, it follows that there exists some $A\in \mathcal{S}^{n\times n}$ such that
$$Q(x)=\frac{1}{2}x^{T}Ax.$$
\end{proof}
\bigskip
We recall  that
$$E=\{k_1 e_1+\cdots +k_n e_n: k_1,\ldots, k_n\ \ \mbox{are integers}\},$$
and the second order difference quotients
$$\Delta_{e}^2u(x)=\frac{u(x+e)+u(x-e)-2u(x)}{\|e\|^2}\ \ \mbox{for}\ \ e\in E.$$
\bigskip
\begin{lem}\label{le3}
For all $e\in E$, we have
$$F_{ij}(D^2u(x))D_{ij}(u(x+e)+u(x-e)-2u(x))\geq 0, \ \ \ \mbox{a.e.}\ \ \  x\in \mathbb{R}^n,$$
where $F_{ij}(D^2u(x))$  is the subdifferential of $F$ at $D^2u(x)$.
\end{lem}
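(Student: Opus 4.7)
The plan is to combine the concavity of $F$ with the periodicity of $f$ evaluated along lattice translates. Since $F$ is concave and uniformly elliptic and $f\in C(\mathbb{R}^n)$, the Evans--Krylov theorem promotes the viscosity solution $u$ to $C^{2,\alpha}_{\mathrm{loc}}(\mathbb{R}^n)$ regularity, so $D^2u(x)$ is defined pointwise; in particular the a.e.\ statement will in fact hold everywhere once one picks an element of the (super)differential of $F$ at each point.

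The key observation is that for any $e\in E=\mathbb{Z}^n$, the periodicity of $f$ gives $f(x+e)=f(x-e)=f(x)$, and hence
\begin{equation*}
F(D^2u(x+e))\;=\;F(D^2u(x-e))\;=\;F(D^2u(x)).
\end{equation*}
I would then invoke the concavity of $F$ on $\mathcal{S}^{n\times n}$: for any $X\in\mathcal{S}^{n\times n}$ and any element $[F_{ij}(D^2u(x))]$ of the superdifferential of $F$ at $D^2u(x)$,
\begin{equation*}
F(X)\;\leq\;F(D^2u(x))\;+\;F_{ij}(D^2u(x))\bigl(X_{ij}-D_{ij}u(x)\bigr).
\end{equation*}

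Applying this inequality with $X=D^2u(x+e)$ and with $X=D^2u(x-e)$ and adding the two resulting inequalities, the left-hand side sums to $2F(D^2u(x))$ by the previous display, and the two $F(D^2u(x))$ terms on the right cancel this exactly, leaving
\begin{equation*}
0\;\leq\;F_{ij}(D^2u(x))\bigl(D_{ij}u(x+e)+D_{ij}u(x-e)-2D_{ij}u(x)\bigr),
\end{equation*}
which is precisely the assertion.

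There is no serious obstacle. The only minor point requiring care is that $F$ is only concave, not assumed to be $C^1$, so $F_{ij}(D^2u(x))$ must be read as an arbitrary element of the superdifferential; the supporting hyperplane inequality above nevertheless holds at every point in the standard way for concave functions, which is why the conclusion is in fact pointwise (the ``a.e.'' in the statement just accommodates the situation where $D^2u$ is only guaranteed a.e., e.g.\ if one worked with a less regular class of solutions).
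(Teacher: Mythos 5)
Your core argument is exactly the one the paper uses: apply the supporting-hyperplane (concavity) inequality for $F$ at $D^2u(x)$ with $X=D^2u(x+e)$ and $X=D^2u(x-e)$, add, and use the periodicity of $f$ to identify $F(D^2u(x+e))=F(D^2u(x-e))=F(D^2u(x))=f(x)$. That part is fine and matches the paper's proof of the lemma.

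The flaw is in your regularity step. This lemma is used in the proof of Theorem \ref{th2}, where $f$ is only assumed continuous, so the Evans--Krylov theorem (which needs $f\in C^{\alpha}$) does not apply, and you cannot conclude $u\in C^{2,\alpha}_{\mathrm{loc}}$ nor that the inequality holds everywhere. What is available under concavity of $F$ and continuity of $f$ is the interior $W^{2,p}$ estimate of Caffarelli (\cite[Theorem 7.1]{CC95}), which is what the paper invokes: $u\in W^{2,p}_{\mathrm{loc}}$ for $p>n$, so $D^2u$ exists only a.e.\ (in the sense of a second-order Taylor expansion at a.e.\ point), and the equation $F(D^2u(x))=f(x)$ holds at such points. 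The ``a.e.''\ in the statement is therefore essential, not a convenience for ``a less regular class of solutions'': in the setting where the lemma is actually applied, the solution is exactly in that less regular class. With the citation corrected to the $W^{2,p}$ estimate (and the standard fact that a $W^{2,p}$ viscosity solution satisfies the equation pointwise a.e.), your argument coincides with the paper's and goes through.
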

\begin{proof}
Duo to \cite[Theorem 7.1]{CC95},  $u\in W^{2,p}$ for $p>n$.
By the concavity of $F$, we have
\begin{equation*}
  \begin{split}
    F(D^2u(x+e)) & \leq F(D^2u(x))+F_{ij}(D^2u(x))D_{ij}(u(x+e)-u(x)), \\
      F(D^2u(x-e)) & \leq F(D^2u(x))+F_{ij}(D^2u(x))D_{ij}(u(x-e)-u(x)).
  \end{split}
\end{equation*}
The result follows immediately from the periodicity of $f$.
\end{proof}
\bigskip
\begin{lem}\label{le4}
For all $e\in E$, we have
$$\sup_{x\in \mathbb{R}^n}\Delta_{e}^2u(x)<\infty.$$
\end{lem}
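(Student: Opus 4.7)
The goal is to show $w(x) := u(x+e) + u(x-e) - 2u(x)$ is bounded above on $\mathbb{R}^n$. I plan to combine the Pucci subsolution property from Lemma \ref{le3} with growth control obtained from the blow-down convergence of Lemma \ref{le5} and interior regularity.

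First, since the coefficient matrix $(F_{ij}(D^2 u))$ has eigenvalues in $[\lambda, \Lambda]$, Lemma \ref{le3} gives $\mathcal{M}^+(D^2 w) \geq F_{ij}(D^2 u) D_{ij} w \geq 0$ almost everywhere; by the $W^{2,p}$ regularity of $u$ from Caffarelli--Cabr\'e, $w$ is thus a strong Pucci subsolution, and the Alexandrov--Bakelman--Pucci estimate yields the maximum principle $\sup_{B_R} w = \sup_{\partial B_R} w$ on every ball.

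Next, I would improve the a priori quadratic bound $|w| \leq C(1+|x|^2)$ to a sublinear estimate. Applying the interior $C^{1,\alpha}$ estimate \cite[Theorem 8.3]{CC95} to $u_R(x) = u(Rx)/R^2$, and using the uniform bound $\|u_R\|_{L^\infty(B_2)} + \|f(R \cdot)\|_{L^\infty} \leq C$, one obtains $[u_R]_{C^{1,\alpha}(B_1)} \leq C$ uniformly in $R$, for some $\alpha = \alpha(n, \lambda, \Lambda) \in (0,1)$. Unscaled, this is the finite-difference estimate $|u(y+h) + u(y-h) - 2u(y)| \leq C(1+|y|)^{1-\alpha} |h|^{1+\alpha}$. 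Specializing $h = e$ yields $|w(y)| \leq C(1+|y|)^{1-\alpha} |e|^{1+\alpha}$, and combined with the maximum principle this gives $\sup_{B_R} w \leq C R^{1-\alpha}$ for every $R \geq 1$.

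Finally, I must upgrade this sublinear bound to a genuine uniform bound. This is the principal obstacle: sublinear growth alone does not imply boundedness for a Pucci subsolution on $\mathbb{R}^n$ (for instance, $|y|^{1/2}$ is subharmonic in dimension $n \geq 2$), so the size bound on $w$ is not enough. My plan is to exploit the finer information of Lemma \ref{le5}, namely that $u_{R_i} \to Q$ in $C^1_{\mathrm{loc}}$ with $Q(x) = \frac{1}{2} x^T A x$, to see that $w$ is asymptotic to the constant $e^T A e$ along the blow-down, not merely sublinear. I would then carry out an iteration on dyadic annuli in the spirit of Caffarelli--Li \cite{CL04}: apply the weak Harnack inequality to the nonnegative Pucci supersolution $\sup_{B_{2^{j+1}}} w - w$, producing a geometric recursion $M_{j+1} \leq \theta M_j + K$ with $\theta < 1$ for $M_j := \sup_{B_{2^j}} w$, which forces $M_j$ to stay bounded uniformly in $j$ and thereby yields $\sup_{\mathbb{R}^n} \Delta_e^2 u < \infty$.
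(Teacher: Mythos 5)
Your proposal diverges from the paper's argument, and there is a genuine gap at the step you yourself flag as the principal obstacle. The difficulty is twofold. First, the claim that ``$w$ is asymptotic to the constant $e^T A e$ along the blow-down'' does not follow from what Lemma~\ref{le5} actually provides. Writing $w(R_ix) = \|e\|^2\,\Delta_{e/R_i}^2 u_{R_i}(x)$, the second difference quotient $\Delta_{e/R_i}^2 u_{R_i}$ probes $Du_{R_i}$ at separation $O(1/R_i)$ and divides by $O(1/R_i)$; the $C^1_{\mathrm{loc}}$ convergence $u_{R_i}\to Q$, or even the uniform $C^{1,\alpha}$ bound, yields only the sublinear estimate $|w(R_ix)|\leq C R_i^{1-\alpha}$ that you already derived --- it gives no pointwise convergence of $\Delta_{e/R_i}^2 u_{R_i}$ to $e^TAe/\|e\|^2$, which would require $C^2$ control of $u_{R_i}$ that is not available. (In Caffarelli--Li's argument, the analogous $L^1$ convergence of difference quotients is obtained via their Lemma~A.2 only after $\sup_{\mathbb{R}^n}\Delta_e^2 u<\infty$ is already in hand, so appealing to it here would be circular.) Second, the weak Harnack inequality applied to $\sup_{B_{2^{j+1}}}w-w$ does not by itself produce a recursion of the form $M_{j+1}\leq\theta M_j+K$: the inequality gives $(\fint_{B_{2^j}}(M_{j+1}-w)^{p_0})^{1/p_0}\leq C(M_{j+1}-M_{j-1})$, and to extract contractivity one must know a priori that $w$ stays below $M_{j+1}$ by a definite amount on a positive fraction of $B_{2^j}$ --- i.e.\ one needs an $L^p$-average bound on $w$, which your argument never supplies.

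The paper sidesteps all of this by obtaining that $L^p$-average bound directly. Since $u\in W^{2,p}_{\mathrm{loc}}$ for $p>n$ with the scaling-invariant interior estimate
\begin{equation*}
r^{2-\frac{n}{p}}\|D^2u\|_{L^p(B_r)}\leq C\left(\|u\|_{L^\infty(B_{2r})}+r^{2-\frac{n}{p}}\|f\|_{L^p(B_{2r})}\right),
\end{equation*}
the quadratic growth of $u$ gives $\|D^2u\|_{L^p(B_r)}^p\leq C r^n$. Writing $\Delta_e^2u(x)=\int_{-1}^{1}\tfrac{e^T}{\|e\|}D^2u(x+te)\tfrac{e}{\|e\|}(1-|t|)\,dt$ and applying Jensen and Fubini then yields $\int_{B_r}|\Delta_e^2u|^p\,dx\leq C(r+\|e\|)^n$, i.e.\ the $L^p$ average of $\Delta_e^2u$ over $B_r$ is bounded uniformly in $r$. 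Combining this with Lemma~\ref{le3} (which makes $\Delta_e^2u$ a subsolution of a uniformly elliptic operator) and the local maximum principle for subsolutions gives $\sup_{B_r}\Delta_e^2u\leq C(1+\|e\|/r)^{n/p}(1+\|f\|_{L^\infty})$, and letting $r\to\infty$ finishes the proof. This is where the essential input lives --- the $W^{2,p}$ estimate, not the $C^{1,\alpha}$ estimate or the blow-down --- and it is the piece your argument is missing.
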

\begin{proof}
By the $W^{2,p}$ estimate $(p>n)$ (see \cite[Theorem 7.1]{CC95}), we obtain
\begin{equation}\label{eq25}
  r^{2-\frac{n}{p}}\|D^2u\|_{L^p(B_r)}\leq C (\|u\|_{L^{\infty}(B_{2r})}+r^{2-\frac{n}{p}}\|f\|_{L^p(B_{2r})})
\end{equation}
 for some constant $C>0$ depending only on $n$, $p$, $\lambda$ and $\Lambda$.

 Since we can write
 $\Delta_{e}^2u(x)=\int_{-1}^1\frac{e^{T}}{\|e\|} D^2u(x+te)\frac{e}{\|e\|}(1-|t|)\,dt$, then the quadratic growth (\ref{eq36}) and (\ref{eq25}) give
 \begin{equation*}
   \begin{split}
     \int_{B_r}|\Delta_{e}^2u(x)|^p\,dx & = \int_{B_r}\left|\int_{-1}^1\frac{e^{T}}{\|e\|} D^2u(x+te)\frac{e}{\|e\|}(1-|t|)\,dt\right|^p\,dx\\
       & \leq 2^{p-1}\int_{B_r} \int_{-1}^1\left|\frac{e^{T}}{\|e\|} D^2u(x+te)\frac{e}{\|e\|}(1-|t|)\right|^p\,dt\,dx\\
       &=2^{p-1}\int_{-1}^1  \int_{B_r} \left|\frac{e^{T}}{\|e\|} D^2u(x+te)\frac{e}{\|e\|}(1-|t|)\right|^p\,dx\,dt\\
       &\leq 2^{p-1} \int_{-1}^1  \int_{B_{r+\|e\|}} \left\| D^2u(x)\right\|^p\,dx\,dt\\
       &\leq C(r+\|e\|)^n(1+\|f\|_{L^{\infty}(\mathbb{R}^n)})^p.
   \end{split}
 \end{equation*}
 By Lemma \ref{le3} and the local maximum principle, this gives rise to a pointwise estimate
 \begin{equation*}
\begin{split}
 \sup_{x\in B_r} \Delta_{e}^2u(x)&\leq  C\left(\frac{1}{|B_{2r}|}\int_{B_{2r}}\left|\Delta_{e}^2u(x)\right|^p\,dx \right)^{\frac{1}{p}}  \\
    & \leq C\left(1+\frac{\|e\|}{r}\right)^\frac{n}{p}(1+\|f\|_{L^{\infty}(\mathbb{R}^n)})
\end{split}
\end{equation*}
for some constant $C>0$.
\end{proof}
\bigskip
We will adopt  Caffarelli and Li's \cite{CL04} arguments to carry out the rest proof of the case $n\geq3$.
\bigskip
\begin{lem}\label{le6}
For all $e\in E$, we have
$$\sup_{x\in \mathbb{R}^n}\Delta_{e}^2u(x)=\frac{e^{T}Ae}{\|e\|^2}.$$
\end{lem}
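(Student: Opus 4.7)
\emph{Plan.} Write $M := \sup_{\mathbb{R}^n}\Delta_e^2 u$, which is finite by Lemma~\ref{le4}. The target is to show $M\|e\|^2 = e^TAe$, in two matching inequalities.

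\emph{Lower bound $M\|e\|^2 \ge e^TAe$.} Along the arithmetic progression $\{je\}_{j\in\mathbb Z}$, the pointwise inequality $\Delta_e^2 u(je)\le M$ telescopes inductively to
\[
u(ke) \;\le\; u(0) + k\bigl(u(e)-u(0)\bigr) + \tfrac{k(k-1)}{2}\,M\|e\|^2, \qquad k\in\mathbb N.
\]
Lemma~\ref{le5} gives $R_i^{-2}u(R_ie)\to Q(e)=\tfrac12 e^TAe$ along the blow-down subsequence. Choosing integer truncations $k_i=\lfloor R_i\rfloor$ and bridging $u(k_ie)$ to $u(R_ie)$ via the uniform $C^1$ bound for $u_R$ derived in Step~1 of the proof of Lemma~\ref{le5}, dividing by $k_i^2$ and passing to the limit yields $\tfrac12 e^TAe \le \tfrac12 M\|e\|^2$.

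\emph{Upper bound $M\|e\|^2 \le e^TAe$.} Set $W := M - \Delta_e^2 u \ge 0$. By Lemma~\ref{le3}, $W$ is a nonnegative supersolution of the linear uniformly elliptic equation $a_{ij}(x)D_{ij}W \le 0$ with measurable coefficients $a_{ij}(x):=F_{ij}(D^2u(x))$ having eigenvalues in $[\lambda,\Lambda]$.

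\emph{Case A: infimum attained.} If $W(x_0)=0$ for some $x_0\in\mathbb R^n$, the strong maximum principle for supersolutions of uniformly elliptic equations with bounded measurable coefficients forces $W\equiv 0$, i.e., $\Delta_e^2 u\equiv M$ identically. Then the telescoped inequality of Step~1 becomes equality, so $u(ke)/k^2\to\tfrac12 M\|e\|^2$, and matching this with the blow-down limit $\tfrac12 e^TAe$ identifies $M\|e\|^2 = e^TAe$.

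\emph{Case B: infimum not attained.} Pick $x_k\in\mathbb R^n$ with $W(x_k)\to 0$, and split $x_k=z_k+y_k$ with $z_k\in\mathbb Z^n$ and $y_k\in Q_1$; a subsequence satisfies $y_k\to y_*\in\overline{Q_1}$. If $\{z_k\}$ is bounded, continuity of $\Delta_e^2 u$ (since $u\in C^{1,\alpha}_{\mathrm{loc}}$) places $W=0$ at a cluster point of $\{x_k\}$, reducing to Case~A. Otherwise $|z_k|\to\infty$; set
\[
\tilde u_k(x) := u(x+z_k) - (Az_k)\cdot x - \tfrac12 z_k^T A z_k,
\]
so that $F(D^2\tilde u_k)=f(\cdot+z_k)=f$ by periodicity of $f$ and $z_k\in\mathbb Z^n$, and $\Delta_e^2\tilde u_k(y_k) = \Delta_e^2 u(x_k)\to M$. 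Combining the $C^{1,\alpha}_{\mathrm{loc}}$ blow-down information of Lemma~\ref{le5} with interior $C^{1,\alpha}$ and Evans–Krylov estimates for concave uniformly elliptic equations with the common RHS $f$, one extracts (after a further constant normalisation) a locally $C^{1,\alpha}$-convergent subsequence $\tilde u_k\to\tilde u_*$, where $\tilde u_*$ is a quadratic-growth viscosity solution of $F(D^2\tilde u_*)=f$ whose blow-down is again $Q$. By continuity, $\Delta_e^2\tilde u_*(y_*)=M$, i.e., the supremum is now attained, and Case~A applied to $\tilde u_*$ concludes $M\|e\|^2=e^TAe$.

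\emph{Main obstacle.} The delicate point is Case~B with $|z_k|\to\infty$: producing a locally uniformly bounded normalisation of the translates $\tilde u_k$ and ensuring the limit $\tilde u_*$ preserves the same second-order blow-down coefficient $A$. The gradient-part correction $Az_k\cdot x$ accounts for the leading linear drift of $u$ at infinity, but the residual $Du(y)-Ay = o(|y|)$ furnished by Lemma~\ref{le5} is only subquadratic, so extracting a compactness limit demands a careful quantitative use of the $C^{1,\alpha}$ blow-down convergence together with the a priori estimates for concave uniformly elliptic equations with periodic right-hand side.
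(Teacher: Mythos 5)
Your lower bound and Case A are sound: the telescoping along the lattice progression combined with the $C^1_{\mathrm{loc}}$ blow-down of Lemma \ref{le5} (and the bridging via the linear growth of $Du$) gives $e^TAe\le M\|e\|^2$, and the strong minimum principle does apply to $W=M-\Delta_e^2u\in W^{2,p}_{\mathrm{loc}}$, $p>n$. The genuine gap is Case B, which is precisely the step you flag as the ``main obstacle'' and then do not carry out: the compactness of the normalised translates is not available from the estimates at hand. Quadratic growth of $u$ only yields $|\tilde u_k|\le C(1+|z_k|^2)$ on compact sets, and even if you subtract the full first-order Taylor polynomial of $u$ at $z_k$ (value and gradient) instead of the affine part of $Q$, the scale-invariant interior $C^{1,\alpha}$ estimate on $B_{|z_k|/2}(z_k)$ only controls the oscillation of $Du$ on $B_1(z_k)$ by $C|z_k|^{1-\alpha}$, so the renormalised translates still fail to be locally uniformly bounded. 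The blow-down information cannot repair this: Lemma \ref{le5} holds only along the subsequence $R_i$ and in the scaled variables, so at best it gives $u(z_k)-\frac12 z_k^TAz_k=o(|z_k|^2)$, whereas you need $O(1)$ control near $z_k$; uniform local bounds for $u$ minus its quadratic part under integer translations are essentially the content of Lemma \ref{le7} and Theorem \ref{th2} themselves, so the reduction of Case B to Case A is circular as written. Moreover, even granting a limit $\tilde u_*$, nothing guarantees that its blow-down is the same matrix $A$: at this stage $A$ is only a subsequential blow-down limit of $u$ and is not known to be unique, so the identity $M\|e\|^2=e^TAe$ for the original $A$ would not follow.

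The paper's (omitted, following \cite[Proposition 2.3]{CL04}) argument avoids attainment of the supremum altogether and is the natural fix for your Case B. Set $s=\sup_{\mathbb{R}^n}\Delta_e^2u$, $t=e^TAe/\|e\|^2$ and $\hat e=e/R_i$. Integrating $\Delta_{\hat e}^2u_{R_i}$ over $B_1$ and using Lemma \ref{le5} with dominated convergence gives $t\le s$; if $t<s$, Chebyshev's inequality shows that the set $\{\Delta_{\hat e}^2u_{R_i}\le \frac{s+t}{2}\}\cap B_1$ has measure bounded below by a fixed fraction of $|B_1|$ for all large $R_i$, and the weak Harnack inequality applied to the nonnegative supersolution $s-\Delta_{\hat e}^2u_{R_i}$ (Lemma \ref{le3}, after rescaling) yields $\sup_{B_{R_i/2}}\Delta_e^2u\le s-\frac{1}{C}$ uniformly in $i$, contradicting the definition of $s$. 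This quantitative measure-theoretic step needs neither the supremum to be attained nor any compactness of translates; replacing your Case B by it makes the proof complete.
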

\bigskip
To proceed, we choose $b\in \mathbb{R}^n$ such that
$$w(e_k)=w(-e_k), \ k=1,\ldots,n, $$
where $$w(x):=u(x)-\frac{1}{2}x^{T}Ax-b\cdot x.$$
Since $\bar{F}(A)=\fint_{Q_1}f(x)\,dx$, there exists some $v\in \mathbb{T}$ satisfying
$$F(A+D^2v)=f$$
in the  viscosity sense. In combination  with $F(A+D^2w)=f$, $h:=w-v$ belongs to  the solution set $S(\lambda,\Lambda,0)$ (see \cite{CC95} for the definition). Consequently, to prove that $h$ is constant, by the harnack inequality, it remains to show that $h$ is bounded from above.
\bigskip
\begin{lem}\label{le7}
$$\sup_{\mathbb{R}^n}h< + \infty.$$
\end{lem}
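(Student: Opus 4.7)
The plan is to argue by contradiction, combining the discrete concavity of $h$ along the integer lattice, the axis symmetry secured by the choice of $b$, and a blow-down rescaling in the spirit of Caffarelli--Li \cite{CL04}. As a first step, I would exploit $\Delta_{e_k}^2 h \leq 0$ (inherited from Lemma \ref{le6}, since $v \in \mathbb{T}$ makes $\Delta_e^2 v \equiv 0$ for every $e \in E$) together with $h(e_k) = h(-e_k)$ to obtain a lattice bound along each axis: with $a_m := h(me_k) - h((m-1)e_k)$, the discrete concavity gives $a_{m+1} \leq a_m$, the axis symmetry gives $a_0 = -a_1$, so $a_0 \geq 0 \geq a_1$, and telescoping yields $h(me_k) \leq h(0)$ for every $m \in \mathbb{Z}$ and $k = 1,\ldots,n$.

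Suppose for contradiction that $\sup_{\mathbb{R}^n} h = +\infty$, and set $M(R) := \sup_{\overline{B_R}} h \to \infty$. The strong maximum principle applied to $h \in S(\lambda,\Lambda,0)$ forces the maximizer $x_R$ on $\overline{B_R}$ to lie on $\partial B_R$. Rescale $\hat h_R(x) := h(Rx)/M(R)$ on $\overline{B_1}$; this satisfies $\hat h_R \leq 1$ with equality at $\xi_R := x_R/R \in \partial B_1$, and $\hat h_R \in S(\lambda,\Lambda,0)$. By Krylov--Safonov combined with Harnack applied to $1 - \hat h_R \geq 0$, the family $\{\hat h_R\}$ is equi-H\"older on compact subsets of $B_1$, and along a subsequence $\hat h_R \to \hat h$ locally uniformly in $B_1$. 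The rescaled discrete concavity $\Delta_{e/R}^2 \hat h_R \leq 0$ for $e \in E$ passes to the limit by density of rational directions, so $\hat h$ is concave on $B_1$. Concavity together with $\hat h$ being a viscosity subsolution of $\mathcal{M}^+ = 0$ implies, at Alexandrov points, $D^2 \hat h \leq 0$ and $\mathcal{M}^+(D^2 \hat h) = \lambda \operatorname{tr}(D^2 \hat h) \leq 0$; equality with $\mathcal{M}^+(D^2 \hat h) \geq 0$ forces $D^2 \hat h \equiv 0$ a.e., so $\hat h$ is affine. The axis bound then translates to $\hat h_R(me_k/R) \leq h(0)/M(R) \to 0$; density of $\{m/R\}$ in $[-1,1]$ gives $\hat h(te_k) \leq 0$ for $t \in [-1,1]$, and combined with $\hat h(0) = 0$ and the affine form, one concludes $\hat h \equiv 0$ on $B_1$.

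To close the argument, the interior vanishing $\hat h \equiv 0$ must be reconciled with the boundary attainment $\hat h_R(\xi_R) = 1$. The plan is to extend the convergence to $\overline{B_1}$ by establishing a doubling bound $M(\tfrac{3}{2}R) \leq CM(R)$: applying discrete concavity at the maximizer $x_R$, one has $h(x_R + z) \leq 2M(R) - h(x_R - z)$ for $z \in \mathbb{Z}^n$, which combined with the quadratic lower bound $h(x) \geq -C(1+|x|^2)$ and the interior H\"older regularity of $h$ controls $h$ on $\overline{B_{3R/2}}$. The upgraded bound yields uniform convergence of $\hat h_R$ on $\overline{B_1}$, so $\hat h(\xi) = 1$ at $\xi := \lim \xi_R \in \partial B_1$, contradicting $\hat h \equiv 0$. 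The technical heart, and the main obstacle, is precisely this doubling bound: it is analogous to the corresponding estimate in Caffarelli--Li's treatment of Monge--Amp\`ere equations, where the Alexandrov--Bakelman--Pucci estimate plays the role of the Pucci regularity theory here, and it is where the dimensional hypothesis $n \geq 3$ enters the argument.
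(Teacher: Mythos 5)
Your overall strategy (blow-down plus discrete lattice concavity plus the axis symmetry secured by the choice of $b$) is the same as the Caffarelli--Li argument that the paper invokes, and your opening observation that $\Delta_e^2 h = \Delta_e^2 w \le 0$ for $e\in E$ and that $h(me_k)\le h(0)$ for all $m\in\mathbb Z$ is correct. However, there are gaps at the two places you identify as delicate, and they are not minor.

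First, the doubling bound. You propose to control $M(3R/2)$ by $M(R)$ using the concavity inequality at the maximizer together with the quadratic lower bound $h\ge -C(1+|x|^2)$. That yields at best $M(3R/2)\le 2M(R)+CR^2$, and the $R^2$ error term is not negligible: if $M(R)$ grows like $R^2$ or slower (which is a priori possible, since $h$ only has quadratic growth), the rescaling $\hat h_R = h(R\cdot)/M(R)$ need not remain uniformly bounded on $\overline{B_1}$, and the whole compactness argument collapses. The argument underlying the paper's Lemma 2.9 in \cite{CL04} avoids this by choosing, for each $x\in[-m,m]^n$, a lattice vector $e$ (built from the integer parts and parities of the coordinates of $x$) so that $x-2e$ lands in the fixed unit cube $[-1,1]^n$, while $x-e$ lands at roughly half scale. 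The concavity relation $h(x)\le 2h(x-e)-h(x-2e)$ then gives a doubling inequality $M_m\le 2M_{\lceil (m+1)/2\rceil+1}+C$ with a \emph{constant} error $C=-\inf_{[-1,1]^n}h$, not an $R^2$ error. This step is the genuine technical heart, and your sketch does not recover it.

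Second, the step ``concave plus viscosity subsolution forces $D^2\hat h=0$ a.e., hence $\hat h$ affine'' is not sound as stated: a concave function can have vanishing Alexandrov second derivative almost everywhere while having a nontrivial singular part in its distributional Hessian (think piecewise-linear concave functions), so $D^2\hat h=0$ a.e.\ does not imply affinity. The paper circumvents this by a barrier-plus-Harnack argument: take a supporting linear function $l\ge H$ with $l(0)=H(0)$, note $l-H\in S(\lambda,\Lambda,0)$ is nonnegative, and apply the Harnack inequality to force $l\equiv H$. This is the correct mechanism and you should replace your Alexandrov-pointwise reasoning with it.

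Third, your contradiction is sought at a boundary point $\xi\in\partial B_1$ where $\hat h_R(\xi_R)=1$, which forces you to upgrade local-uniform convergence in $B_1$ to uniform convergence on $\overline{B_1}$. The paper's normalization avoids this entirely: with the correct constant-error doubling estimate one gets $\max_{[-1/2,1/2]^n}H_i\ge 1/8$ for large $i$, so the limit $H$ satisfies $\max_{[-1/2,1/2]^n}H\ge 1/8$ at an interior scale, while the axis bound $H(\pm e_k/2)\le 0$ together with the barrier--Harnack linearity forces $H\equiv 0$. The contradiction is purely interior and no boundary convergence is needed. I recommend restructuring your proof along these lines rather than trying to justify boundary attainment.
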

 \bigskip
 Since proofs of the above  two lemmas are the same as  that of  \cite[Proposition 2.3 and Lemma 2.9]{CL04},  the detailed proofs are omitted. As a consequence, we complete the proof of the case $n\geq 3$.

 To conclude the proof of Theorem \ref{th2}, we give a simpler proof for the case $n=2$ without the concavity condition for $F$.
 \bigskip
\begin{proof}[Proof of the case $n=2$]
By \cite[Theorem 7.1]{CC95}, we know $u\in W^{2,p}$ for $p>2$.
Applying the $C^{1,\alpha}$ estimate $(0<\alpha<1)$ in $B_r (r>1)$, we obtain
$$r\|Du\|_{L^{\infty}(B_r)}\leq C(\|u\|_{L^{\infty}(B_{2r})}+r^2\|f\|_{L^{\infty}(B_{2r})}),$$
which together with the quadratic growth (\ref{eq36}) yields
\begin{equation}\label{eq22}
  |Du(x)| \leq C (1+|x|),\ \ \ x\in \mathbb{R}^2
\end{equation}
for some constant $C>0$.

Considering
 $$v_k(x)=u(x+e_k)-u(x),\ \ \ k=1,2,$$
we easily see that $v_k$ satisfies a linear elliptic equation
 $$a_{ij}(x)D_{ij}v_k(x)=0,\ \ \ \mbox{a.e.} \ \ x\in \mathbb{R}^2,$$
 where $a_{ij}$ satisfies the elliptic condition with  ellipticity  constants $0<\lambda \leq \Lambda < \infty$.
Owing to (\ref{eq22}), it is obviously to see  that $v_k$ has  linear growth, i.e.,
 $$|v_k(x)|\leq C (1+|x|) ,\ \ \ x\in \mathbb{R}^2$$
 for some constant $C>0$.

From the classical $C^{1,\alpha}$ interior estimate in two dimension due to Morrey \cite{CBM} and Nirenberg \cite{LN}, it follows that $v_k$ must be a linear function.  Hence  $D^2u(x+e_k)=D^2u(x)$, which implies that
$h(x):=\Delta u(x)$ is periodic. Now we choose some $\tilde{A}\in \mathcal{S}^{2\times 2}$ satisfying $tr(\tilde{A})=\fint_{Q_1}h(x)\,dx$.
Then, by Lax-Milgram theorem, there exists some periodic function $v\in W^{2,2}(\mathbb{R}^2)$ such that
$$\Delta v(x)=h(x)-\fint_{Q_1}h(x)\,dx,\ \ \ \mbox{a.e.} \ \ x\in \mathbb{R}^2.$$
Hence $w(x)=\frac{1}{2}x^{T}\tilde{A}x+v(x)$ satisfies $$\Delta w(x)=h(x),\ \ \ \mbox{a.e.} \ \ x\in \mathbb{R}^2.$$
 It follows that
$$\Delta(u-w)=0 \ \ \ \mbox{in}\ \ \mathbb{R}^2,$$
which yields
$$u-w=\frac{1}{2}x^{T}Ax+b\cdot x+c$$
for some $A\in \mathcal{S}^{2\times 2}$, $b\in \mathbb{R}^2$ and $c\in \mathbb{R}$ since $u-w$ has quadratic growth. Clearly, we obtain the desired result.
\end{proof}
\bigskip
\begin{rmk}\label{rej3}
As mentioned in Remark \ref{rej4}, the Liouville type result also holds for the linear elliptic equation of non-divergence form. That is, if the entire solution $u$ of $a_{ij}(x)D_{ij}u(x)=f(x)$ has  quadratic growth, then $u$ must be a quadratic
polynomial up to a periodic function. Actually, the second order difference  $\Delta_{k}^2u(x)=u(x+2e_k)+u(x)-2u(x+e_k)$ $(k=1,\ldots,n)$ satisfies $a_{ij}(x)D_{ij}\Delta_{k}^2u(x)=0$ and is bounded.  Hence by the Harnack inequality, $\Delta_{k}^2u$ is a constant, which yields that $u$ is a second order polynomial plus a periodic function clearly.

\end{rmk}
\bigskip
\section{Uniformly elliptic equations on exterior domains}
\noindent

In this section, we establish  the existence and Liouville type results for   quadratic  growth
solutions of uniformly elliptic equations with the periodic data on exterior domains.
 Now we give a proof of Theorem \ref{th10}.
 \bigskip
 \begin{proof}[Proof of Theorem \ref{th10}]
 Without loss of generality, we assume that $\Omega$ contains the origin. Let $r_0=\mbox{daim}\ \Omega+1$.  In view of Theorem \ref{th1}, let $v\in \mathbb{T}$ be a viscosity solution of $F(A+D^2v)=f$ in $\mathbb{R}^n$. We set $w:=\frac{1}{2}x^{T}Ax+b\cdot x+v(x)$ and $\bar{C}:=\|w-\varphi\|_{L^{\infty}(\partial \Omega)}$. For $r>r_0$, let $u_r$ be a  viscosity solution of
\begin{equation*}
   \left\{
\begin{aligned}
   &F(D^2u_r)=f\ \ \mbox{in}\ \ B_r \backslash \overline{\Omega},\\
&u_r=\varphi\ \  \mbox{on}\ \ \partial \Omega,\\
&u_r=w \ \  \mbox{on}\ \ \partial B_r.
\end{aligned}
\right.
 \end{equation*}
 Clearly, we see that
 \begin{equation*}
   \left\{
\begin{aligned}
   &F(D^2w)=f\ \ \mbox{in}\ \ B_r \backslash \overline{\Omega},\\
&w+\bar{C} \geq \varphi\ \  \mbox{on}\ \ \partial \Omega,\\
&w-\bar{C} \leq \varphi\ \  \mbox{on}\ \ \partial \Omega.
\end{aligned}
\right.
 \end{equation*}
 Thus, applying the comparison principle for  viscosity solutions yields
 $$w-\bar{C}\leq u_r\leq w+\bar{C} \ \ \mbox{in}\ \ B_r \backslash \overline{\Omega}.$$
Hence we can apply the H\"{o}lder estimate  to $F(D^2u_r)=f$ in any compact subset $K$ of $\mathbb{R}^n \backslash \overline{\Omega}$ to obtain
$$ \|u_r\|_{C^\alpha(K)}
    \leq  C$$
for some constant $C$ independent of $r$. It follows that there exists a function $u\in C(\mathbb{R}^n \backslash \overline{\Omega})$ and a subsequence of $\{u_r\}_{r=1}^\infty$ that converges uniformly to $u$ in compact sets  of $\mathbb{R}^n \backslash \overline{\Omega}$. Moreover, we have
$$F(D^2u)=f\ \ \mbox{in}\ \ \ \mathbb{R}^n \backslash \overline{\Omega}$$
in the viscosity sense with
$$w-\bar{C}\leq u\leq w+\bar{C} \ \ \mbox{in}\ \ \mathbb{R}^n \backslash \overline{\Omega}.$$

We are now in a position to show that $u$ is continuous up to $\partial \Omega$ and coincides with $\varphi$ on $\partial \Omega$. More precisely, for any $x_0\in \partial \Omega$, then we have
$$\lim_{x\in \mathbb{R}^n \backslash \overline{\Omega},\ x\rightarrow x_0}u(x)=\varphi(x_0).$$
To show this, for arbitrary $\epsilon>0$ and fixed $x_0\in \partial \Omega$, by virtue of the continuity of $\varphi$ on $\partial \Omega$, there exists a constant  $0<\delta<1$ such that $|\varphi(x)-\varphi(x_0)|<\epsilon$ if $x \in \partial \Omega\cap B_\delta(x_0)$.
We then define functions $\varphi^{\pm}\in C^{2}(\overline{B_{r_0}\backslash \Omega})$ by
$$\varphi^{\pm}(x)=\varphi(x_0)\pm\left(\epsilon+\frac{2\sup_{\partial \Omega}|\varphi|}{\delta^2}|x-x_0|^2\right).$$
Clearly, we have
\begin{equation*}
   \left\{
\begin{aligned}
&\varphi^{-} \leq \varphi\leq \varphi^{+} \ \  \mbox{on}\ \ \partial \Omega,\\
&\varphi^{-} \leq 0\leq \varphi^{+} \ \  \mbox{on}\ \ \partial B_{r_0}.
\end{aligned}
\right.
 \end{equation*}
 Since the domain $\Omega$  satisfies  a uniform interior sphere condition, we let  $B_{\rho}(z_{x_0})\subset \Omega$ and $\overline{B_{\rho}(z_{x_0})}\cap \partial \Omega={x_0}$.
 Now we consider  functions
 $w^{\pm}\in C^{2}(\overline{B_{r_0}\backslash \Omega})$ defined by
 $$w^{\pm}(x)=\pm \widehat{B}\left(e^{-\widehat{A}\rho^2}-e^{-\widehat{A}|x-z_{x_0}|^2}\right)$$
 for some positive constants $\widehat{A}$ and $\widehat{B}$ to be specified later. For $x\in \overline{B_{r_0}\backslash \Omega}$, choosing large enough $\widehat{A}>\frac{n\Lambda}{2\lambda \rho^2}$ and $\widehat{B}>0$, we obtain
\begin{equation*}
   \begin{split}
       F(D^2w^{+}(x)+D^2\varphi^{+}(x))\leq & F(D^2\varphi^{+}(x))+\mathcal{M}^{+}(D^2w^{+}(x)) \\
       \leq & F\left(\frac{4\sup_{\partial \Omega}|\varphi|}{\delta^2}\right)+2e^{-\widehat{A}|x-z_{x_0}|^2}\widehat{B}\widehat{A}(n\Lambda-2\widehat{A}\lambda \rho^2)\\
       \leq & F\left(\frac{4\sup_{\partial \Omega}|\varphi|}{\delta^2}\right)+2e^{-4\widehat{A}r_0^2}\widehat{B}\widehat{A}(n\Lambda-2\widehat{A}\lambda\rho^2)\\
       \leq & \inf_{\mathbb{R}^n}f,
   \end{split}
 \end{equation*}
 \begin{equation*}
   \begin{split}
      F(D^2w^{-}(x)+D^2\varphi^{-}(x))\geq & F(D^2\varphi^{-}(x))+\mathcal{M}^{-}(D^2w^{-}(x)) \\
       \geq & F\left(-\frac{4\sup_{\partial \Omega}|\varphi|}{\delta^2}\right)-2e^{-\widehat{A}|x-z_{x_0}|^2}\widehat{B}\widehat{A}(n\Lambda-2\widehat{A}\lambda\rho^2)\\
       \geq & F\left(-\frac{4\sup_{\partial \Omega}|\varphi|}{\delta^2}\right)-2e^{-4\widehat{A}r_0^2}\widehat{B}\widehat{A}(n\Lambda-2\widehat{A}\lambda \rho^2)\\
       \geq & \sup_{\mathbb{R}^n}f
   \end{split}
 \end{equation*}
 and
$$\inf_{x\in \partial B_{r_0} }\widehat{B}\left(e^{-\widehat{A}\rho^2}-e^{-\widehat{A}|x-z_{x_0}|^2}\right)\geq \widehat{B}\left(e^{-\widehat{A}\rho^2}-e^{-\widehat{A}(\rho+1)^2}\right)\geq \sup_{\partial B_{r_0}}(|w|+\bar{C}).$$
Clearly, it follows that
$$w^{-}+\varphi^{-}\leq u_r\leq w^{+}+\varphi^{+}\ \  \mbox{on}\ \ \partial \Omega\cup \partial B_{r_0}.$$
Consequently, by the aid of the  comparison principle  for viscosity solutions, we have
 $$w^{-}+\varphi^{-}\leq u_r\leq w^{+}+\varphi^{+}\ \ \mbox{in}\  \ B_{r_0}\backslash \overline{\Omega}.$$
 Furthermore, letting $r\rightarrow \infty$ leads to
$$ w^{-}+\varphi^{-}\leq u\leq w^{+}+\varphi^{+}\ \ \mbox{in}\  \ B_{r_0}\backslash \overline{\Omega},$$
 that is,
 $$ |u(x)-\varphi(x_0)|\leq \epsilon+\frac{2\sup_{\partial \Omega}|\varphi|}{\delta^2}|x-x_0|^2+w^{+}(x),$$
 which immediately implies that $u(x)\rightarrow \varphi(x_0)$ as $x\rightarrow x_0$.

Finally, since $u-w\in S(\lambda,\Lambda,0)$  is bounded in $\mathbb{R}^n \backslash \overline{\Omega}$, we apply the Harnack inequality and the comparison principle to conclude that $\lim_{|x|\rightarrow \infty} (u-w)(x)$ exists.
In particular, if $\frac{\Lambda}{\lambda}< n-1$, we can obtain a more refined error estimate. Indeed, applying the comparison principle to the  Pucci's operators (see \cite[Theorem 1.10]{ASS11} or \cite[Lemma 2.5]{LZP}), we obtain the desired estimate (\ref{eq39}).
 \end{proof}
 \bigskip
 We end up this section with the proof Theorem \ref{th9}.
\bigskip
\begin{proof}[Proof of Theorem \ref{th9}]
For $r>2$, let $u_r$  be a viscosity solution of
\begin{equation*}
   \left\{
\begin{aligned}
   &F(D^2u_r)=f\ \ \mbox{in}\ \ B_r,\\
&u_r=u\ \  \mbox{on}\ \ \partial B_r.
\end{aligned}
\right.
 \end{equation*}

 We will show that $u_r$ is  bounded in compact sets  of $\mathbb{R}^n$. For this purpose, we let $\bar{u}\in C^2(\mathbb{R}^n)$ keeping $\bar{u}=u$ outside $B_2$ and
 set $$F(D^2\bar{u})=f+g\ \ \ \mbox{in}\ \ \ \mathbb{R}^n,$$
 where $g$ is H\"{o}lder continuous with support in $B_2$.  We  choose $\bar{C}>0$ such that
 $$\bar{C}\frac{1}{4}(\Lambda+\lambda(n-1))\left(1-\frac{\lambda}{\Lambda}(n-1)\right)2^{\frac{1}{2}\left(1-\frac{\lambda}{\Lambda}(n-1)\right)-2}\leq -\|g\|_{L^{\infty}(B_2)}.$$
 Then we consider $$E(x)=\bar{C}|x|^{\frac{1}{2}\left(1-\frac{\lambda}{\Lambda}(n-1)\right)},\ \ \ x\in \mathbb{R}^n\setminus \{0\}.$$
 Since $\bar{u}-u_r$ is bounded in $B_1$, there exists a constant $0<\varepsilon<1$ such that
 $$|\bar{u}(x)-u_r(x)| \leq E(x),\ \ \ x\in \partial B_\varepsilon \cup \partial B_r.$$
 For $x\in B_r\backslash B_\varepsilon$, we obtain
 $$\mathcal{M}^{+}(D^2E(x))\leq g(x)\leq \mathcal{M}^{+}(D^2\bar{u}(x)-D^2u_r(x))$$
 and
 $$\mathcal{M}^{-}(D^2\bar{u}(x)-D^2u_r(x))\leq g(x)\leq \mathcal{M}^{-}(-D^2E(x))$$
 From the comparison principle for viscosity solutions, it follows that
 $$  \bar{u}(x)-E(x)\leq u_r(x)\leq \bar{u}(x)+E(x),\ \ \ x\in B_r\backslash B_\varepsilon.$$
 Applying the Alexandroff-Bakelman-Pucci estimate to $F(D^2u_r)=f$ in $B_2$, we have
 $$\|u_r\|_{L^{\infty}(B_2)}\leq \|u\|_{L^{\infty}(\partial B_2)}+\|E\|_{L^{\infty}(\partial B_2)}+C\|f\|_{L^{\infty}(B_2)},$$
 where $C>0$ depends only on $n$, $\lambda$ and $\Lambda$.
Hence we prove that $u_r$ is  bounded in compact sets  of $\mathbb{R}^n$.
From the H\"{o}lder estimate, it follows that there exists a subsequence of $\{u_r\}_{r=1}^\infty$ that converges uniformly to a function $w\in C(\mathbb{R}^n)$ in compact sets  of $\mathbb{R}^n$.
Moreover, we have
$$F(D^2w)=f\ \ \mbox{in}\ \ \ \mathbb{R}^n$$
in the viscosity sense with
\begin{equation}\label{eq45}
  |w(x)-u(x)| \leq E(x),\ \ x\in \mathbb{R}^n \backslash \overline{B_2}
\end{equation}
and therefore
$$|w(x)|\leq C(1+|x|^2), \ \ x\in \mathbb{R}^n$$
for some constant $C>0$. In addition, by Theorem \ref{th2}, we conclude that
$$w(x)=\frac{1}{2}x^{T}Ax+b\cdot x+c+v(x)$$
for  some $A\in \mathcal{S}^{n\times n}$ with $\bar{F}(A)=\fint_{Q_1}f(x)\,dx$, $b\in \mathbb{R}^n$, $c\in \mathbb{R}$ and $v\in \mathbb{T}$.

Finally, due to $F(D^2u)=F(D^2w)=f$ in $\mathbb{R}^n \backslash \overline{B_1}$ in the viscosity sense and (\ref{eq45}), $u-w$ belongs to the solution set $S(\lambda,\Lambda,0)$ and is bounded in $\mathbb{R}^n \backslash \overline{B_1}$.  Following  the arguments in the establishment of the estimate (\ref{eq39}), we obtain
$$|u(x)-w(x)-c^*| \leq C|x|^{1-\frac{\lambda}{\Lambda}(n-1)},\ \ \ x\in \mathbb{R}^n \backslash \overline{B_1}$$
for some $c^*\in \mathbb{R}$ and $C>0$.
\end{proof}
\bigskip

\section{Degenerate elliptic equations in the whole space}
\noindent

This section is devoted to proofs of Theorems \ref{th11} and \ref{th8}. Since we intend to  apply the arguments in the proof of the uniformly elliptic case to the degenerate elliptic case, we will illustrate the signification of the assumptions (H1)-(H3) in the course of  proof.
\bigskip
\begin{proof}[Proof of Theorem \ref{th11}]
\emph{Uniqueness.} It is clear  by the strong maximum principle.

\emph{Existence.} Let $X=C^{2,\alpha}(\mathbb{R}^n) \cap \mathbb{T}$ and $Y=C^{ \alpha}(\mathbb{R}^n) \cap \mathbb{T}$ for some small enough $0<\alpha<1$. We  consider the  map $ \mathcal{F} :   X \times [0,1]\longrightarrow Y$ defined by
 $$(v,t)\longmapsto \mathcal{F}(v,t)= F(A+D^2v)-\fint_{Q_1}F(A+D^2v)\,dx-tf,$$
and the set
$$\mathcal{T}:=\{t\in [0,1]: \mathcal{F}(v_t,t)=0 \ \ \mbox{for some}\ \ v_t\in X\ \ \ \mbox{with} \ \ A+D^2v_t\in \Gamma\}.$$

\emph{Step 1}: \textbf{$\mathcal{T}$ is closed.}
The assumptions  (H1) and (H2) allow us to obtain a priori estimates. Indeed, Let $v_t\in X$ denote a solution of
$$ F(A+D^2v_t)-\fint_{Q_1}F(A+D^2v_t)\,dx=tf,\ \ \ 0\leq t \leq 1.$$
From (H1), it follows that there exists a constant $C_{K}>0$ depending only on $K$ such that
$$\|A+D^2v_t\|_{L^{\infty}(Q_1)}\leq C_{K}.$$
Furthermore, since (H2), we  apply the Evans-Krylov  theory (see  \cite{GT83} and \cite[Theorem 8.1]{CC95}) to obtain
$$ \|v_t\|_{C^{2,\alpha} (Q_1)} \leq C \left(\|v_t\|_{L^{\infty}(Q_2)}+\|A\|+\|f\|_{C^{\alpha}(Q_2)}+\left|\fint_{Q_1}F(A+D^2v_t)\,dx\right|\right),$$
where  $C>0$   depends only on $C_{K}$, $\alpha$ and $n$. By the aid of the  ellipticity condition for $F$ and  the interpolation inequality, we proceed to obtain
$$ \|v_t\|_{C^{2,\alpha} (Q_1)}\leq C (\|v_t\|_{L^{\infty}(Q_1)}+\|A\|+|F(A)|+\|f\|_{C^{\alpha}(Q_1)}).$$
Following the same arguments to derive the estimate (\ref{eq4}), we obtain
\begin{equation*}
  \|v_t\|_{L^{\infty}(Q_1)} \leq C \|f\|_{L^{\infty}(Q_1)}
\end{equation*}
for some constant $C>0$ depending only on $C_{K}$ and  $n$. Consequently, we obtain the estimate
\begin{equation}\label{eq43}
  \|v_t\|_{C^{2,\alpha} (Q_1)}\leq C (\|A\|+|F(A)|+\|f\|_{C^{\alpha}(Q_1)})
\end{equation}
for some constant $C>0$ independent of $t$.

Let $\{t_i\}_{i=1}^{\infty}\subset \mathcal{T}$  be a sequence converging to $t$. By the estimate (\ref{eq43}) and $A+D^2v_{t_i}\in K $, we deduce that  a subsequence of $\{v_{t_i}\}_{i=1}^{\infty}$   converges  uniformly in the $C^2$ norm to $v$ with $A+D^2v\in K$. Hence $t\in \mathcal{T}$, that is, $\mathcal{T}$ is closed.

\emph{Step 2}: \textbf{$\mathcal{T}$ is open.} It is clear.
\end{proof}
\bigskip
\begin{proof}[Proof of Theorem \ref{th8}]
The assumption (H3)
will be useful in the determination of $A$, i.e., we can obtain a corresponding conclusion to Lemma \ref{le5}.
\end{proof}
\bigskip
 \section{Applications  to the $k$-Hessian equations}
\noindent

We are now in  a position to prove Theorems \ref{th6} and \ref{th7}. Firstly, we have the following necessary condition for the existence of a periodic solution to (\ref{eq33}).
\bigskip
 \begin{lem}\label{le9}
 For any $A\in \mathcal{S}^{n\times n}$  and $v\in C^{2}(\mathbb{R}^n)\cap \mathbb{T}$ with  $\kappa(A), \kappa(A+D^2v)\in \Gamma_k$, we have
 $$\int_{Q_1}\sigma_{k}(A+D^2v)\,dx=\sigma_{k}(A).$$
 \end{lem}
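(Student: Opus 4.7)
The plan is to express $\sigma_k(A+D^{2}v)-\sigma_k(A)$ as a total divergence and exploit the periodicity of $v$ to kill the boundary contributions on $\partial Q_1$. The key tool is the classical divergence-free property of the Newton tensor
$$S_k^{ij}(M):=\frac{\partial\sigma_k}{\partial M_{ij}}:$$
whenever $M=D^{2}w$ for some $w\in C^{3}$, one has $\sum_j\partial_j[S_k^{ij}(D^{2}w)]=0$, a consequence of the antisymmetry of the generalized Kronecker-delta expansion of $\sigma_k$ combined with the identity $\partial_\ell\partial_i\partial_j w=\partial_i\partial_\ell\partial_j w$.

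Concretely, I would first use the fundamental theorem of calculus
$$\sigma_k(A+D^{2}v)-\sigma_k(A)=\int_0^1\frac{d}{dt}\sigma_k(A+tD^{2}v)\,dt=\int_0^1 S_k^{ij}(A+tD^{2}v)D_{ij}v\,dt.$$
For each $t\in[0,1]$, $A+tD^{2}v=D^{2}\!\bigl(\tfrac12 x^{T}Ax+tv\bigr)$ is a Hessian, so the divergence-free identity converts the integrand into
$$S_k^{ij}(A+tD^{2}v)\,D_{ij}v=\partial_j\bigl(S_k^{ij}(A+tD^{2}v)\,\partial_i v\bigr).$$
Both $S_k^{ij}(A+tD^{2}v)$ (a polynomial in the periodic field $D^{2}v$) and $\partial_i v$ are periodic, so integrating over $Q_1$ and applying the divergence theorem, the fluxes on opposite faces of $Q_1$ cancel, giving $\int_{Q_1}S_k^{ij}(A+tD^{2}v)D_{ij}v\,dx=0$ for every $t$. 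Fubini in $t$ then yields $\int_{Q_1}[\sigma_k(A+D^{2}v)-\sigma_k(A)]\,dx=0$, which is the assertion of the lemma since $\sigma_k(A)$ is constant.

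The main obstacle is regularity: with $v\in C^{2}$ only, $S_k^{ij}(A+tD^{2}v)$ is merely $C^{0}$ and the divergence identity is only distributional. I would dispatch this by a standard mollification: choose $v_\varepsilon\in C^{\infty}\cap\mathbb{T}$ by convolution with a periodic mollifier, apply the argument rigorously to $v_\varepsilon$, and pass to $\varepsilon\to 0$ using uniform convergence $D^{2}v_\varepsilon\to D^{2}v$ on $Q_1$ together with continuity of $\sigma_k$ as a polynomial. Note that $v_\varepsilon$ need not satisfy $\kappa(A+D^{2}v_\varepsilon)\in\Gamma_k$, but this is irrelevant: the identity is a purely algebraic/integration-by-parts statement about the polynomial $\sigma_k$, not an ellipticity statement, so no cone hypothesis is used during the approximation step.
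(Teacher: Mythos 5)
Your proof is correct and follows essentially the same route as the paper's: the fundamental theorem of calculus to write $\sigma_k(A+D^2v)-\sigma_k(A)$ as a $t$-integral, the divergence-free property of the Newton tensor $\sigma_k^{ij}$ (the paper cites this as the Reilly formula) to turn the integrand into a divergence, and periodicity to kill the flux over $\partial Q_1$. The one thing you add beyond the paper is the explicit mollification to justify the pointwise divergence identity when $v$ is only $C^2$; the paper applies the Reilly formula directly without comment, so your care on this point is a genuine (if minor) tightening of the argument.
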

 \begin{proof}
 We clearly have
 \begin{equation*}
      \int_{Q_1}(\sigma_{k}(A+D^2v)-\sigma_{k}(A))\,dx =\int_{Q_1} \int_{0}^{1} \sigma_k^{ij}(A+tD^2v)D_{ij}v \,dt\,dx,
 \end{equation*}
 where $\sigma_k^{ij}=\frac{\partial \sigma_k}{\partial a_{ij}}$.  Since the Reilly formula (see \cite{Rei73})
 $$D_i(\sigma_k^{ij}(A+tD^2v))=0, \ \ \ j=1,2,\ldots,n,$$
 we obtain
 \begin{equation*}
   \begin{split}
    \int_{Q_1}(\sigma_{k}(A+D^2v)-\sigma_{k}(A))\,dx =& \int_{Q_1} \int_{0}^{1} D_i(\sigma_k^{ij}(A+tD^2v)D_{j}v) \,dt\,dx, \\
       & = \int_{0}^{1}   \int_{Q_1}D_i(\sigma_k^{ij}(A+tD^2v)D_{j}v) \,dx\,dt=0.
   \end{split}
 \end{equation*}
 \end{proof}
 \bigskip
 In order to prove Theorem \ref{th6}, we need the following two propositions.
 \bigskip
 \begin{pro}\label{pro1}
 Let $A$  be a positive definite matrix and  $f\in C^{2}(\mathbb{R}^n)$ be periodic and positive. If  $v\in C^{2}(\mathbb{R}^n)\cap \mathbb{T}$  with $\kappa(A+D^2v)\in \Gamma_k$ is a   solution of
 \begin{equation}\label{eq34}
  \sigma_k(A+D^2v)=f,
 \end{equation}
then we have the estimate
  $$\|v\|_{C^2(Q_1)}\leq C,$$
where $C>0$ depends only on $\|v\|_{L^\infty(Q_1)}$, $\|f\|_{C^2(Q_1)}$, $\inf_{Q_1}f$, $\|A\|$ and $\|A^{-1}\|$.
 \end{pro}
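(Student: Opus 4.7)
The plan is to derive an \emph{a priori} $C^2$ bound via a Pogorelov-type maximum principle argument on the lifted function $u(x):=\tfrac12 x^\mathsf{T}Ax+v(x)$, which satisfies $\sigma_k(D^2u)=f$ with $D^2u\in\Gamma_k$. The crucial observation is that because $A$ is constant and $v$ is $\mathbb{Z}^n$-periodic, both $D^2u=A+D^2v$ and $Dv=Du-Ax$ are themselves $\mathbb{Z}^n$-periodic, so any continuous scalar quantity built from them attains its supremum on the compact cell $Q_1$. The usual Pogorelov spatial cutoff is thereby replaced by periodicity.

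\textbf{Step 1 (Concave reformulation).} Since $f\geq\inf_{Q_1}f>0$, rewrite the equation as $F(D^2u)=\log f$ with $F:=\log\sigma_k$, which is concave on $\Gamma_k$. The linearisation $F^{ij}:=\sigma_k^{ij}/\sigma_k$ is positive definite, and Euler's identity gives $F^{ij}u_{ij}=k$. Twice-differentiating the equation and using concavity of $F$ yields the pointwise inequality
\[
F^{ij}u_{11ij}\;\geq\;(\log f)_{11}.
\]

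\textbf{Step 2 (Periodic Pogorelov auxiliary).} Introduce the periodic function
\[
W(x):=\log\lambda_{\max}(D^2u(x))+\tfrac{\alpha}{2}|Dv(x)|^{2}+\beta\,v(x),
\]
with $\alpha,\beta>0$ to be chosen. Let $x_0\in Q_1$ achieve the maximum of $W$. Picking orthonormal coordinates at $x_0$ diagonalising $D^2u(x_0)=\operatorname{diag}(\lambda_1,\dots,\lambda_n)$ with $\lambda_1=\lambda_{\max}$, the smooth surrogate $\widetilde W(x):=\log u_{11}(x)+\tfrac{\alpha}{2}|Dv(x)|^{2}+\beta v(x)$ satisfies $\widetilde W\leq W$ with equality at $x_0$, and hence also attains its maximum at $x_0$. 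This device bypasses the mild non-smoothness of $\lambda_{\max}$ when the top eigenvalue has multiplicity.

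\textbf{Step 3 (Maximum principle and absorption).} At $x_0$ the conditions $D\widetilde W=0$ and $F^{ij}\widetilde W_{ij}\leq 0$, together with the identities $F^{ij}u_{ij}=k$, $F^{ij}v_{kij}=(\log f)_k$, and $v_{ij}=u_{ij}-A_{ij}$, produce an inequality of the shape
\[
\alpha\,F^{ij}v_{ki}v_{kj}+\beta\,k\;\leq\;\beta\,F^{ij}A_{ij}+\beta^{2}F^{ij}v_iv_j+\mathcal{R},
\]
where $\mathcal{R}$ is controlled by $\|f\|_{C^2}$, $1/\inf f$, and $\|Dv\|_{L^\infty}$, and the bad term $\beta^{2}F^{ij}v_iv_j$ comes from $F^{ij}u_{11i}u_{11j}/u_{11}^2$ after substituting the critical-point identity $u_{11i}=-(\beta u_{11}+\alpha v_k v_{ki})$-type expression. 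The $\alpha$-term in $\widetilde W$ generates the positive quantity $\alpha F^{ij}v_{ki}v_{kj}$, which dominates $\beta^{2}F^{ij}v_iv_j$ once $\alpha$ is large. The positive-definite hypothesis $A\geq\|A^{-1}\|^{-1}I$ yields $F^{ij}A_{ij}\geq\|A^{-1}\|^{-1}\sum_iF^{ii}$, and $\|A\|$ bounds $F^{ij}A_{ij}$ from above; a suitable choice of $\beta$ and then $\alpha$ (depending quantitatively on $\|A\|,\|A^{-1}\|,\|f\|_{C^2},\inf f$) forces a contradiction unless $u_{11}(x_0)$ is bounded. Since $W$ peaks at $x_0$ and $|\beta v|\leq\beta\|v\|_{L^\infty(Q_1)}$ everywhere, exponentiating transfers this to a uniform bound
\[
\lambda_{\max}(D^2u)\;\leq\;C\bigl(n,k,\|v\|_{L^\infty(Q_1)},\|f\|_{C^2(Q_1)},\inf_{Q_1}f,\|A\|,\|A^{-1}\|\bigr)\quad\text{on }\mathbb{R}^n.
\]

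\textbf{Step 4 (Two-sided $D^2v$ bound).} With $\lambda_{\max}(D^2u)\leq C$ and $\sigma_k(D^2u)\geq\inf f>0$ on the cone $\Gamma_k$, the Maclaurin inequalities imply a uniform lower bound $\lambda_{\min}(D^2u)\geq c>0$. Hence $\|D^2v\|_{L^\infty}=\|D^2u-A\|_{L^\infty}$ is controlled. Since $v$ is periodic with $\fint_{Q_1}v=0$, the mean-value theorem on $Q_1$ then gives $\|Dv\|_{L^\infty(Q_1)}\leq C$, completing the $C^2$ bound.

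The main obstacle is the algebraic bookkeeping in Step 3: tuning $\alpha,\beta$ so that the good contributions $\alpha F^{ij}v_{ki}v_{kj}+\beta k$ dominate the bad terms $\beta F^{ij}A_{ij}+\beta^{2}F^{ij}v_iv_j+\mathcal{R}$, with $\|A^{-1}\|$ entering quantitatively through the lower bound on $F^{ij}A_{ij}$ and $\|v\|_{L^\infty}$ entering via the exponential on $\beta v$. Once this balancing is achieved, Steps 1, 2, 4 are essentially routine.
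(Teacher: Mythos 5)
Your overall strategy (a Pogorelov-type maximum principle on the torus, with periodicity replacing the cutoff) is a legitimate alternative to the paper's argument, which instead makes the linear change of variables $y=Qx$ with $Q^{T}Q=A$ to reduce to $\widetilde{\sigma}_k(I+D^2\tilde v)=\tilde f$ and then simply quotes the a priori gradient and second-derivative estimates of Li (Propositions 1.2 and 1.3 of \cite{LL90}). However, as written your proof has a genuine circularity. In Step 3 you concede that the remainder $\mathcal{R}$ is controlled by $\|Dv\|_{L^\infty}$, and in fact the dependence is worse than a remainder: substituting the critical-point identity $u_{11i}/u_{11}=-(\alpha v_kv_{ki}+\beta v_i)$ into $-F^{ij}u_{11i}u_{11j}/u_{11}^2$ produces the term $-2\alpha^{2}F^{ij}(v_kv_{ki})(v_lv_{lj})\geq -2\alpha^{2}|Dv|^{2}F^{ij}v_{ki}v_{kj}$, which is quadratic in $\alpha$ and can only be absorbed by the good term $\alpha F^{ij}v_{ki}v_{kj}$ if $\alpha\lesssim\|Dv\|_{L^\infty}^{-2}$ (so the tuning goes in the opposite direction from ``$\alpha$ large''), and similarly $\beta^{2}F^{ij}v_iv_j\leq\beta^{2}|Dv|^{2}\sum_iF^{ii}$ needs $|Dv|$ bounded. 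But the Proposition's constant may depend only on $\|v\|_{L^\infty(Q_1)}$, $\|f\|_{C^2}$, $\inf f$, $\|A\|$, $\|A^{-1}\|$, and you only obtain the gradient bound in Step 4 as a consequence of the $C^2$ bound. For $1<k<n$ solutions are not convex, so the gradient estimate does not come for free from periodicity and $\|v\|_{L^\infty}$; it is itself a nontrivial theorem (Trudinger, Chou--Wang, exactly the results the paper invokes in its $C^0$ estimate, and contained in Li's Propositions used here). You must either import such a gradient estimate before Step 3 or build it into the test function; without that, the balancing of $\alpha,\beta$ cannot be carried out with the allowed dependencies.

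Two smaller points. First, in Step 4 the claim that $\lambda_{\max}(D^2u)\leq C$ together with $\sigma_k(D^2u)\geq\inf f>0$ forces $\lambda_{\min}(D^2u)\geq c>0$ is false for $k<n$ (it would make every periodic solution uniformly convex); what is true and sufficient is $\lambda_{\min}>-(n-1)\lambda_{\max}$, since $\Gamma_k\subset\Gamma_1$ gives $\operatorname{tr}(D^2u)>0$, and this already bounds $\|D^2v\|_{L^\infty}$. Second, granting a gradient bound, your absorption does close, but one should say why the good term dominates: at the maximum point, with $\lambda_1$ the largest eigenvalue, one uses $F^{11}\lambda_1^{2}\geq c(n,k)\lambda_1^{2}\sum_iF^{ii}$ and $\sum_iF^{ii}\geq c(n,k)\,(\sup f)^{-1/k}>0$ (Newton--Maclaurin); the constants $\|A\|$, $\|A^{-1}\|$ then enter only through $F^{ij}A_{ij}\leq\|A\|\sum_iF^{ii}$ and the passage from $D^2u$ back to $D^2v$, whereas in the paper they enter through the conjugation by $Q$. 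With the gradient estimate supplied up front, your route would give a self-contained proof where the paper's is a reduction-plus-citation; as it stands, the missing gradient estimate is the gap.
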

 \begin{proof}
For an invertible  matrix $Q$ satisfying  $Q^{T}Q=A$, we can set $\widetilde{\sigma}_k\left(\widetilde{M}\right)=\sigma_k\left(Q^{T}\widetilde{M}Q\right)$ for any  $\kappa\left(Q^{T}\widetilde{M}Q\right)\in \Gamma_k$ and $y=Qx$ for any $x,y \in \mathbb{R}^n$. Hence $\tilde{v}(y)=v(Q^{-1}y)$ and $\tilde{f}(y)=f(Q^{-1}y)$ satisfy
$$\widetilde{\sigma}_k(I+D^2\tilde{v})=\tilde{f}.$$
Using \cite[Propostions 1.2 and 1.3]{LL90}, we  have the estimate
 $$\|\tilde{v}\|_{C^2(\mathbb{R}^n)}\leq C,$$
 where $C>0$ depends only on $\|\tilde{v}\|_{L^\infty(\mathbb{R}^n)}$, $\|\tilde{f}\|_{C^2(\mathbb{R}^n)}$ and $\inf_{\mathbb{R}^n}\tilde{f}$.
 Consequently, we can easily obtain the desired estimate.
\end{proof}
\bigskip
Under the assumptions of  Proposition \ref{pro1}, we establish the $C^0$ estimate for periodic solutions $v$ of (\ref{eq34}).
\bigskip
\begin{pro}\label{pro2}
There exists a constant $C>0$ depending only on $k$ and $n$ such that
\begin{equation*}
  \|v\|_{L^\infty(Q_1)}\leq C(\|A\|+\|f\|_{C^{0,1}(Q_1)}^{\frac{1}{k}}).
\end{equation*}
\end{pro}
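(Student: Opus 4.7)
The plan is to argue by contradiction via a rescaling--compactness--rigidity scheme. Suppose the estimate fails. Then there exist sequences $v_i\in C^2(\mathbb R^n)\cap\mathbb T$, positive definite matrices $A_i$, and periodic $f_i\in C^2$ satisfying the hypotheses with $\kappa(A_i+D^2 v_i)\in\Gamma_k$, $\sigma_k(A_i+D^2 v_i)=f_i$, and
\[
\lambda_i:=\|v_i\|_{L^\infty(Q_1)}>i\bigl(\|A_i\|+\|f_i\|_{C^{0,1}(Q_1)}^{1/k}\bigr)\longrightarrow\infty.
\]
Rescale by setting $\tilde v_i:=v_i/\lambda_i$, $\tilde A_i:=A_i/\lambda_i$, $\tilde f_i:=f_i/\lambda_i^k$. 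Then $\|\tilde v_i\|_{L^\infty}=1$, $\fint_{Q_1}\tilde v_i=0$, and by the $k$-homogeneity of $\sigma_k$,
\[
\sigma_k(\tilde A_i+D^2 \tilde v_i)=\tilde f_i \quad\text{in }\mathbb R^n,\qquad \tilde A_i+D^2\tilde v_i\in\Gamma_k,
\]
where $\|\tilde A_i\|\le 1/i$ and $\|\tilde f_i\|_{L^\infty}\le 1/i^k$.

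Applying the Newton--Maclaurin inequality to the $k$-admissible matrix $\tilde A_i+D^2\tilde v_i$ yields
\[
\Delta\tilde v_i+\operatorname{tr}(\tilde A_i)=\sigma_1(\tilde A_i+D^2\tilde v_i)\ge n\binom{n}{k}^{-1/k}\tilde f_i^{\,1/k}\ge 0,
\]
so $\Delta\tilde v_i\ge -\operatorname{tr}(\tilde A_i)\ge -n/i\to 0$. Combined with the uniform $L^\infty$ bound $\|\tilde v_i\|_\infty=1$, interior H\"older regularity for $k$-admissible functions with bounded $k$-Hessian (in the spirit of Trudinger--Wang) produces a uniform $C^{0,\alpha}$ estimate on $\{\tilde v_i\}$ over $Q_1$. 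By Arzel\`a--Ascoli and periodicity, a subsequence converges uniformly on compact sets to a limit $\tilde v\in C(\mathbb R^n)\cap\mathbb T$ with $\|\tilde v\|_{L^\infty}=1$ and $\fint\tilde v=0$. Passing to the distributional limit in $\Delta\tilde v_i+n/i\ge 0$ yields $\Delta\tilde v\ge 0$ on $\mathbb R^n$, so $\tilde v$ is subharmonic.

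The contradiction then follows from rigidity: a continuous, periodic, subharmonic function on $\mathbb R^n$ must be constant, since its supremum is attained at a point on the torus and the strong maximum principle propagates that value globally; the zero-mean condition then forces $\tilde v\equiv 0$, contradicting $\|\tilde v\|_{L^\infty}=1$. The main technical obstacle is establishing the uniform $C^{0,\alpha}$ bound that enables uniform convergence. This is handled by applying the interior H\"older regularity for $k$-Hessian equations with bounded data to $\tilde u_i:=\tfrac{1}{2}x^T\tilde A_i x+\tilde v_i$, which is $k$-admissible with $\sigma_k(D^2\tilde u_i)=\tilde f_i$ uniformly bounded and with $\|\tilde u_i\|_{L^\infty(Q_1)}\le 1+n/(2i)$; the quadratic correction $\tfrac{1}{2}x^T\tilde A_i x$ is itself uniformly $C^{0,\alpha}$-small on $Q_1$, so the bound transfers to $\tilde v_i$. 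A direct alternative would be a barrier comparison of $\tfrac12 x^TAx+v$ with the quadratic $\tfrac{1}{2}x^T(A+\alpha I)x+\ell(x)+c$, with $\alpha=(\|f\|_\infty/\binom{n}{k})^{1/k}$, but making the boundary matching work without circularity requires exactly the kind of compactness input used above.
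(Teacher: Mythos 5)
Your argument matches the paper's proof essentially step for step: both rescale $v_i$, $A_i$, $f_i$ by $\|v_i\|_{L^\infty(Q_1)}$, invoke the $k$-Hessian interior gradient estimates of Trudinger and Chou--Wang (which you call a H\"older estimate ``in the spirit of Trudinger--Wang'', but it is the same tool and gives the needed equicontinuity) for compactness, apply Newton--Maclaurin to pass subharmonicity to the limit, and conclude with the strong maximum principle for a continuous periodic zero-mean function. The structure and key ingredients are identical to the paper's.
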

\begin{proof}
To the contrary, we  find a sequence of  $\{v_i\}_{i\geq 1}\subset C^2(\mathbb{R}^n)\cap \mathbb{T}$, positive definite matrices $\{A_i\}_{i\geq 1}$ and periodic and positive functions $\{f_i\}_{i\geq 1}\subset C^2(\mathbb{R}^n)$ satisfying $\kappa(A_i+D^2v_i)\in \Gamma_k$ and

$$\sigma_k(A_i+D^2v_i)=f_i$$
such that
$$i\left(\|A_i\|+\|f_i\|_{C^{0,1}(Q_1)}^{\frac{1}{k}}\right)\leq \|v_i\|_{L^\infty(Q_1)}.$$
Letting $\tilde{v}_i=\frac{v_i}{\|v_i\|_{L^\infty(Q_1)}}$, $\tilde{A}_i=\frac{A_i}{\|v_i\|_{L^\infty(Q_1)}}$ and $\tilde{f}_i=\frac{f_i}{\|v_i\|_{L^\infty(Q_1)}^k}$, we clearly have

$$\sigma_k(\tilde{A}_i+D^2\tilde{v}_i)=\tilde{f}_i.$$
Form the gradient estimate for $\sigma_k$-equations established by Trudinger \cite{Tru97} and Chou-Wang \cite{CW01}, we have
$$\left\|D\left(\tilde{v}_i+\frac{1}{2}x^{T}\tilde{A}_i x\right)\right\|_{L^{\infty}(Q_2)}\leq C,$$
where $C>0$ depends only on $n$, $k$, $\|\tilde{A}_i \|$, $\|\tilde{f}_i\|_{C^{0,1}(Q_1)}$ and $\|\tilde{v}_i\|_{L^\infty(Q_1)}$.
Consequently, we obviously see that, up to subsequence,
$$\tilde{v}_i \rightarrow \tilde{v} \ \ \mbox{in}\ \ C(Q_1),\ \ \ \tilde{f}_i\rightarrow 0 \ \ \mbox{in}\ \ C(Q_1)\ \ \mbox{and}\ \ \tilde{A}_i \rightarrow 0\ \ \   \mbox{as}\ \ i\rightarrow \infty.$$
Then, by the Newton-Maclaurin inequality, $\tilde{v}$ satisfies  the equation
$$\Delta  \tilde{v}\geq 0\ \ \ \mbox{in}\ \ \ \mathbb{R}^n$$
in the viscosity sense
with $\|\tilde{v}\|_{L^\infty(\mathbb{R}^n)}=1$. Hence, by the strong maximum principle for viscosity solutions, $\tilde{v}$ is a constant, namely, $\tilde{v}=0$ since $\tilde{v} \in \mathbb{T}$. Nevertheless, $\|\tilde{v}\|_{L^\infty(\mathbb{R}^n)}=1$ yields a contradiction.
\end{proof}
\bigskip
 Now we commence with the proof of Theorem \ref{th6}.
 \bigskip
 \begin{proof}[Proof of Theorem \ref{th6}] We only need to check that $\sigma_k$ satisfies the assumptions $(H1)$ and $(H2)$. Then the theorem immediately follows from Theorem \ref{th11}. Clearly, $\sigma_k$ satisfies  $(H2)$. It remains to show that $\sigma_k$ satisfies  $(H1)$ by setting  $\Gamma=\{A\in \mathcal{S}^{n\times n}: \kappa(A)\in \Gamma_k\}$ and $\widetilde{\Gamma}=\{A\in \mathcal{S}^{n\times n}: \kappa(A)\in\Gamma_n\}$.

Owing to Lemma \ref{le9}, we  let $v_t\in C^{2}(\mathbb{R}^n)\cap \mathbb{T}$ with $\kappa(A+D^2v_t)\in \Gamma_k$ be a solution of
 $$\sigma_k(A+D^2v_t)=t\left(f-\fint_{Q_1}f(x)\,dx\right)+\sigma_k(A),\ \ \ 0\leq t \leq 1.$$
Observing that $f-\fint_{Q_1}f(x)\,dx+\sigma_k(A)>0$ in $\mathbb{R}^n$, we see that
\begin{equation*}
  \begin{split}
     t\left(f-\fint_{Q_1}f(x)\,dx\right)+\sigma_k(A)=& t\left(f-\fint_{Q_1}f(x)\,dx+\sigma_k(A)\right)+(1-t)\sigma_k(A) \\
      \geq & \min\left\{\left(f-\fint_{Q_1}f(x)\,dx+\sigma_k(A)\right),\sigma_k(A)\right\}\\
      \geq &  \inf_{\mathbb{R}^n}\left(f-\fint_{Q_1}f(x)\,dx+\sigma_k(A)\right)\ \ \mbox{in}\ \ \mathbb{R}^n.
  \end{split}
\end{equation*}
Hence combining this with Propositions \ref{pro1} and \ref{pro2}, we  have the estimate
 $$\|v_t\|_{C^2(\mathbb{R}^n)}\leq C,$$
 where $C>0$ depends only on $n$, $k$, $\|f\|_{C^2(\mathbb{R}^n)}$, $\inf_{\mathbb{R}^n}\left(f-\fint_{Q_1}f(x)\,dx+\sigma_k(A)\right)$, $\|A\|$ and $\|A^{-1}\|$. Moreover, $\kappa(A+D^2v_t)$ lies in a compact set of $\Gamma_k$.
Consequently, $\sigma_k$ satisfies  $(H1)$.
\end{proof}
\bigskip
Let $\Omega$ be a bounded domain in $\mathbb{R}^n$ and $v\in C(\Omega)$. We recall that the \emph{upper contact set} of $v$ is defined by
$$\Gamma_v^{+}(\Omega)=\{y\in \Omega: v(x)\leq v(y)+p\cdot (x-y)\ \ \mbox{for all}\ \ x\in\Omega, \ \ \mbox{for some}\ \ p\in \mathbb{R}^n\}.$$
\bigskip
\begin{proof}[Proof of Theorem \ref{th7}]
Let $\Gamma=\{A\in \mathcal{S}^{n\times n}: \kappa(A)\in \Gamma_2\}$ and $\widetilde{\Gamma}=\{A\in \mathcal{S}^{n\times n}: \kappa(A)\in\Gamma_n\}$.
In view of  Remark \ref{rm2} and Lemma \ref{le9}, it is obviously seen  that $\sigma_2$ has the homogenisation operator $\sigma_2$ restricted in $\widetilde{\Gamma}$. Since any entire convex viscosity solution of $\sigma_2(D^2u)=1$ must be a quadratic polynomial, which follows from \cite{CY10,SY21,MC21}, $\sigma_2$ satisfies (H3). In view of Theorem \ref{th8}, our next goal is to show that $D^2u$ is bounded. We will divide the proof of this aim into two steps.

\emph{Step 1}. We claim that
$$\sup_{e\in E}\sup_{x\in \mathbb{R}^n}\Delta_{e}^2u(x) \leq C$$
for some $C>0$.

For convenience, we write in the form

$$F(D^2u)=(\sigma_2(D^2u))^{\frac{1}{2}}=f^{\frac{1}{2}}\ \ \ \mbox{in}\ \ \mathbb{R}^n$$
and set
$$F_{ij}(D^2u)=\frac{\partial F}{\partial u_{ij}}(D^2u),\ \ \ \sigma_2^{ij}(D^2u)=\frac{\partial \sigma_2}{\partial u_{ij}}(D^2u).$$
By the concavity of $F$ and the periodicity of $f$, we have
$$F_{ij}(D^2u(x))D_{ij}\Delta_{e}^2u(x)\geq 0,\ \ \ x\in \mathbb{R}^n.$$
Consider the function
$$v(x)=\eta(x) \Delta_{e}^2u(x),\ \ \ x\in B_{2r},$$
where $$\eta(x)=\left(1-\frac{|x|^2}{4r^2}\right)^{\beta}$$
and $\beta>2$ is a constant. A straightforward computation  leads to
$$D_i \eta=-\beta\frac{x_i}{2r^2}\eta^{1-\frac{1}{\beta}},\ \ \ D_{ij}\eta=-\delta_{ij}\frac{\beta}{2r^2}\eta^{1-\frac{1}{\beta}}-\beta(\beta-1)\frac{x_i x_j}{4r^4}\eta^{1-\frac{2}{\beta}}$$
and
\begin{equation*}
  \begin{split}
     F_{ij}(D^2u)D_{ij}v= & F_{ij}(D^2u)(\eta D_{ij}\Delta_{e}^2u+2 D_{i}\eta D_j\Delta_{e}^2u+\Delta_{e}^2uD_{ij}\eta)\\
       \geq & -\Lambda_F (2|D\eta||D\Delta_{e}^2u|+|\Delta_{e}^2u||D^2\eta|)\\
       \geq &    -C\Lambda_F \left(\frac{\eta^{1-\frac{1}{\beta}}}{r}|D\Delta_{e}^2u|+\frac{\eta^{1-\frac{1}{\beta}}+\eta^{1-\frac{2}{\beta}}}{r^2}|D\Delta_{e}^2u|\right)\\
\geq & -\frac{C}{r^2\eta^{\frac{2}{\beta}}} \Lambda_F\left(r\eta^{1+\frac{1}{\beta}}|D\Delta_{e}^2u|+v\right) \ \ \ \mbox{in} \ \ B_{2r},
   \end{split}
\end{equation*}
where $\Lambda_F$ is the maximum eigenvalue of the matrix $(F_{ij}(D^2u))$ and $C>0$ depends only on  $\beta$.
Furthermore, using the concavity of $v$ on $\Gamma_v^{+}(B_{2r})$, we  estimate
\begin{equation*}
  \begin{split}
    \eta|D\Delta_{e}^2u|=  |Dv-\Delta_{e}^2uD\eta|& \leq |Dv|+\Delta_{e}^2u|D\eta| \\
      & \leq \frac{v}{2r-|x|}+\Delta_{e}^2u|D\eta|\\
      &\leq \frac{v}{r\eta^{\frac{1}{\beta}}}\left(\frac{1}{2}+\beta\right).
  \end{split}
\end{equation*}
Consequently, in $\Gamma_v^{+}(B_{2r})$ we have
\begin{equation}\label{eq5}
0\leq -F_{ij}(D^2u)D_{ij}v\leq \frac{Cv}{r^2\eta^{\frac{2}{\beta}}} \Lambda_F.
\end{equation}

Next, it aims  to estimate
$$\frac{(\Lambda_F)^n}{\det (F_{ij}(D^2u))}.$$
From $\sigma_k^{ij}(D^2u)u_{ij}=k \sigma_k(D^2u)$ for $1\leq k\leq n$, we know that
$$\sigma_2^{ij}(D^2u)\geq \frac{\sigma_2(D^2u)}{\sigma_1(D^2u)}\sigma_1^{ij}(D^2u)=\frac{f }{\Delta u}I$$
in the sense of matrices.
Using $\sum_{i=1}^n\sigma_k^{ii}(D^2u)=(n-k+1)\sigma_{k-1}(D^2u)$ for $1\leq k\leq n$ and the Newton-Maclaurin inequality, we obtain
$$\sigma_2^{ij}(D^2u)\leq \left(\sum_{i=1}^n\sigma_2^{ii}(D^2u)\right)I=(n-1)\sigma_1(D^2u)I=C(n)(\Delta u)I$$
in the sense of matrices. Hence observing that
 $F_{ij}(D^2u)=\frac{1}{2}(\sigma_2(D^2u))^{-\frac{1}{2}}\sigma_2^{ij}(D^2u)=\frac{1}{2}f^{-\frac{1}{2}}\sigma_2^{ij}(D^2u)$,
  we have
$$\Lambda_F\leq C(n) \frac{1}{2}f^{-\frac{1}{2}}\Delta u$$
and $$\det (F_{ij}(D^2u))=\left(\frac{1}{2}f^{-\frac{1}{2}}\right)^n\det (\sigma_2^{ij}(D^2u))\geq \left(\frac{1}{2}\right)^nf^{\frac{n}{2}}\frac{1}{(\Delta u)^n}.$$
It follows that
$$\frac{(\Lambda_F)^n}{\det (F_{ij}(D^2u))}\leq  \frac{C(n)}{f^{n}}(\Delta u)^{2n}.$$

Inserting this into (\ref{eq5}) yields
$$0\leq \frac{-F_{ij}(D^2u)D_{ij}v}{(\det (F_{ij}(D^2u)))^{\frac{1}{n}}}\leq \frac{Cv}{r^2\eta^{\frac{2}{\beta}}} \frac{\Lambda_F}{(\det (F_{ij}(D^2u)))^{\frac{1}{n}}} \leq C \frac{v}{r^2\eta^{\frac{2}{\beta}}} (\Delta u)^{2},$$
where $C>0$ depends only on $n$, $\beta$ and $\inf_{\mathbb{R}^n} f$.
Using the Alexandrov maximum principle, we have
\begin{equation*}
  \begin{split}
    \sup_{B_{2r}}v & \leq Cr\left(\int_{\Gamma_v^{+}(B_{2r})}\frac{(-F_{ij}(D^2u)D_{ij}v)^n}{\det (F_{ij}(D^2u))}\,dx\right)^{\frac{1}{n}} \\
      & \leq Cr\left(\int_{\Gamma_v^{+}(B_{2r})}
      \frac{v^n}{r^{2n}\eta^{\frac{2n}{\beta}}} (\Delta u)^{2n}\,dx\right)^{\frac{1}{n}}\\
       &\leq \frac{C}{r}\left(\int_{\Gamma_v^{+}(B_{2r})}
      v^{n(1-\frac{2}{\beta})}\frac{v^{\frac{2n}{\beta}}}{\eta^{\frac{2n}{\beta}}} (\Delta u)^{2n}\,dx\right)^{\frac{1}{n}}\\
      &\leq  \frac{C}{r}\left(\sup_{B_{2r}}v\right)^{1-\frac{2}{\beta}} \left(\int_{B_{2r}}
       (\Delta_{e}^2u)^{\frac{2n}{\beta}}(\Delta u)^{2n}\,dx\right)^{\frac{1}{n}}\\
       &\leq \frac{C}{r}\left(\sup_{B_{2r}}v\right)^{1-\frac{2}{\beta}}  \left(\int_{B_{2r}} (\Delta u(x))^{2nq}\,dx\right)^{\frac{1}{nq}}  \left(\int_{B_{2r}} (\Delta_{e}^2u)^{\frac{2n}{\beta}q'}\,dx\right)^{\frac{1}{nq'}},
  \end{split}
\end{equation*}
where $q=1+\frac{1}{\beta}$ and $q'=1+\beta$.
Then using
$$\left(\int_{B_r}|\Delta_{e}^2u|^p\,dx\right)^{\frac{1}{p}}\leq \left(\int_{B_{r+\|e\|}}|\Delta u|^p\,dx\right)^{\frac{1}{p}}$$
for any $r>0$ and $p\geq 1$, we arrive at
$$ \sup_{B_{2r}}v \leq \frac{C}{r}\left(\sup_{B_{2r}}v\right)^{1-\frac{2}{\beta}}   \left(\int_{B_{2r+\|e\|}}|\Delta u|^{2n\left(1+\frac{1}{\beta}\right)}\,dx\right)^{\frac{1}{n}}.$$
Finally we have
$$\sup_{B_{r}}\Delta_{e}^2u \leq  \frac{C}{r^{\frac{\beta}{2}}} \left(\int_{B_{2r+\|e\|}}|\Delta u|^{2n\left(1+\frac{1}{\beta}\right)}\,dx\right)^{\frac{\beta}{2n}}$$
for any $r>0$, where $C>0$ depends only on $n$, $\beta$ and $\inf_{\mathbb{R}^n} f$.
For any $p> 2n$, we can choose $\beta>2$ such that $p>2n\left(1+\frac{1}{\beta}\right)$. Furthermore, by the integral condition (\ref{eq44}) and the H\"{o}lder inequality, we
have
$$\int_{B_{2r+\|e\|}}|\Delta u|^{2n\left(1+\frac{1}{\beta}\right)}\,dx \leq C(2r+\|e\|)^n,$$
and  therefore
$$\sup_{e\in E}\sup_{x\in \mathbb{R}^n}\Delta_{e}^2u(x) \leq C$$
for some constant $C>0$.

\emph{Step 2}. Using the interior $C^2$ estimate from \cite{GQ19}, the proof is complete. Indeed. for $x\in \mathbb{R}^n$, we set
$$\tilde{u}(y)=u(x+y)-u(x)-Du(x)\cdot y, \ \ \ y\in\mathbb{R}^n.$$
Clearly, we have
$$\tilde{u}(0)=0, \ \ \ \tilde{u}(y)\geq 0,\ \ \ y\in\mathbb{R}^n$$
and
$$\sup_{e\in E}\sup_{y\in \mathbb{R}^n}\Delta_{e}^2\tilde{u}(y)=\sup_{e\in E}\sup_{x\in \mathbb{R}^n}\Delta_{e}^2u(x)=\gamma <\infty.$$
Combining this with the convexity of $\tilde{u}$, we obtain
$$\sup_{B_2}\tilde{u}\leq C$$
for some constant $C>0$ depending only on $n$ and $\gamma$.
Since $$\sigma_2(D^2\tilde{u}(y))=f(x+y),$$
we see, by   \cite[Theorem 3]{GQ19}, that
$$|D^2u(x)|=|D^2\tilde{u}(0)|\leq C$$
for some constant $C>0$ depending only on $n$, $\gamma$, $\inf_{\mathbb{R}^n}f$ and $\|f\|_{C^2(\mathbb{R}^n)}$.
\end{proof}

\bigskip
\section*{Acknowledgement}
This work is supported by NSFC 12071365.

\bigskip
\bibliographystyle{elsarticle-num}

\begin{thebibliography}{99}

\bibitem{ASS11}S. N. Armstrong, B. Sirakov and C. K. Smart, Fundamental solutions of homogeneous fully nonlinear elliptic equations, Comm. Pure Appl. Math., 64 (2011), no. 6, 737-777.
\bibitem{AL}M. Avellaneda and F. H. Lin, Un th\'{e}or\`{e}me de Liouville pour des \'{e}quations elliptiques \`{a} coefficients p\'{e}riodiques, C. R. Acad. Sci. Paris S\'{e}r. I Math., 309 (1989), no. 5, 245-250.
\bibitem{AL89} M. Avellaneda and F. H. Lin, Compactness methods in the theory of homogenization. II. Equations in nondivergence form, Comm. Pure Appl. Math., 42 (1989), no. 2, 139-172.
\bibitem{BCGJ} J. Bao, J. Chen, B. Guan and M. Ji, Liouville property and regularity of a Hessian quotient equation, Amer. J. Math., 125 (2003), no. 2, 301-316.
\bibitem{BLP78} A. Bensoussan, J. L. Lions and G. C. Papanimlaou, Asymptotic Anlysis of Periodic Structures, North-Holland Publ., 1978.
\bibitem{CL03}L. A. Caffarelli and Y. Y. Li, An extension to a theorem of J\"{o}rgens, Calabi, and Pogorelov, Comm. Pure Appl. Math., 56 (2003), no. 5,  549-583.
\bibitem{CL04} L. A. Caffarelli and Y.Y. Li, A Liouville theorem for solutions of the Monge-Amp\`{e}re equation with periodic data, Ann. Inst. H. Poincar\'{e} C Anal. Non Lin\'{e}aire., 21 (2004), no. 1, 97-120.
\bibitem{CNS85}L. Caffarelli, L. Nirenberg and J. Spruck, The Dirichlet problem for nonlinear second-order elliptic equations. III. Functions of the eigenvalues of the Hessian, Acta Math., 155 (1985), no. 3-4, 261-301.
\bibitem{CC95}L. Caffarelli and X. Cabr\'{e}, Fully nonlinear elliptic equations. American Mathematical Society Colloquium Publications, 43. American Mathematical Society, Providence, RI, 1995. vi+104 pp. ISBN: 0-8218-0437-5.
\bibitem{CY10} S.-Y. A. Chang and Y. Yuan, A Liouville problem for the sigma-2 equation, Discrete Contin. Dyn. Syst., 28 (2010), no. 2, 659-664.
\bibitem{CW01} K. S. Chou and X. J. Wang, A variational theory of the Hessian equation, Comm. Pure Appl. Math., 54 (2001), no. 9, 1029-1064,
\bibitem{Ev92} L. C. Evans, Periodic homogenisation of certain fully nonlinear partial differential equations, Proc. Roy. Soc. Edinburgh Sect. A., 120 (1992), no. 3-4, 245-265.
\bibitem{GT83}D. Gilbarg and N. S. Trudinger, Elliptic Partial Differential Equations of Second Order, 2nd Edition, Vol. 224 of Grundlehren der Mathematischen Wissenschaften [Fundamental Principles of Mathematical Sciences], Springer-Verlag, Berlin, 1983.
\bibitem{GQ19}P. Guan and G. Qiu, Interior $C^2$ regularity of convex solutions to prescribing scalar curvature equations, Duke Math. J., 168 (2019), no. 9, 1641-1663.
\bibitem{Iv83}N. M. Ivochkina, Description of cones of stability generated by differential operators of Monge-Amp\`{e}re type, Mat. Sb. (N.S.) 122 (1983), no. 2, 265-275 (Russian),   English translation: Math. USSR-Sb. 50 (1985), no. 1, 259-268.
\bibitem{L01}D. A. Labutin, Isolated singularities for fully nonlinear elliptic equations, J. Differential Equations, 177 (2001), no. 1, 49-76.
\bibitem{LLY20}D. S. Li, Z. S. Li and Y. Yuan, A Bernstein problem for special Lagrangian equations in exterior domains, Adv. Math., 361 (2020), 106927, 29 pp.
\bibitem{LW}P. Li and J. Wang, Polynomial growth solutions of uniformly elliptic operators of non-divergence form, Proc. Amer. Math. Soc., 129 (2001), 3691-3699.
\bibitem{LL90}
Y. Y. Li, Some existence results for fully nonlinear elliptic equations of Monge-Amp\`{e}re type, Comm. Pure Appl. Math., 43 (1990), no. 2, 233-271.
\bibitem{LL22} Y. Y. Li and  S. Y. Lu, Monge-Amp\`{e}re equation with bounded periodic data, Anal. Theory Appl., 38 (2022), no. 2, 128-147.
\bibitem{LZP} Y. Y. Lian and K. Zhang, Asymptotic behavior for fully nonlinear elliptic equations in exterior domains, arXiv:2401.05829.
\bibitem{MC21}C. Mooney, Strict 2-convexity of convex solutions to the quadratic Hessian equation, Proc. Amer. Math. Soc., 149 (2021), no. 6, 2473-2477.
\bibitem{CBM} C. B. Morrey, On the solutions of quasi-linear elliptic partial differential equations, Trans. Amer. Math. Soc., 43 (1938), 126-166.
\bibitem{MS92}J. Moser and M. Struwe, On a Liouville-type theorem for linear and nonlinear elliptic differential equations on a torus, Bol. Soc. Brasil. Mat. (N.S.), 23 (1992), 1-20.
\bibitem{LN}L. Nirenberg, On nonlinear elliptic partial differential equations and Holder continuity, Comm. Pure Appl. Math., 6 (1953), 103-156.
\bibitem{Rei73} R. C. Reilly, On the Hessian of a function and the curvatures of its graph, Michigan Math. J., 20 (1973), 373-383.
\bibitem{SY21}R. Shankar and Y. Yuan, Regularity for almost convex viscosity solutions of the sigma-2 equation, J. Math. Study., 54 (2021), no. 2, 164-170.
\bibitem{TZ16}E. V. Teixeira and L. Zhang, Global Monge-Amp\`{e}re equation with asymptotically periodic data, Indiana Univ. Math. J., 65 (2016), no. 2, 399-422.
\bibitem{Tru97}N. S. Trudinger, Weak solutions of Hessian equations, Comm. Partial Differential Equations., 22 (1997), no. 7-8, 1251-1261.
\bibitem{WM16}M. Warren, Nonpolynomial entire solutions to $\sigma_k$ equations, Comm. Partial Differential Equations., 41 (2016), no. 5, 848-853.
\end{thebibliography}

\end{document}